\newtheorem{theorem}{Theorem}
\theoremstyle{plain}
\newtheorem{claim}[theorem]{Claim}
\newtheorem{corollary}[theorem]{Corollary}
\newtheorem{definition}[theorem]{Definition}
\newtheorem{fact}[theorem]{Fact}
\newtheorem{lemma}[theorem]{Lemma}
\newtheorem{proposition}[theorem]{Proposition}
\numberwithin{equation}{section}
\numberwithin{theorem}{section}
\def\F{\mathcal{F}}
\def\G{\mathcal{G}}
\def\h{\mathcal{H}}
\def\K{\mathcal{K}}
\def\pp{\mathcal{P}}
\def\Q{\mathcal{Q}}
\def\Y{\mathcal{Y}}
\def \a{\alpha}
\def \e{\epsilon}
\def \d{\text{deg}}
\def \r{\gamma}
\begin{document}
\title[Hamilton $\ell$-cycles]{Minimum codegree threshold for Hamilton $\ell$-cycles in $k$-uniform hypergraphs}
\thanks{The second author is partially supported NSA grant H98230-12-1-0283 and NSF grant DMS-1400073.}
\author{Jie Han}
\address[Jie Han and Yi Zhao]
{Department of Mathematics and Statistics,\newline
\indent Georgia State University, Atlanta, GA 30303}
\email[Jie Han]{jhan22@gsu.edu}
\author{Yi Zhao}
\email[Yi Zhao]{yzhao6@gsu.edu}%
\date{\today}
\subjclass{Primary 05C65, 05C45}%
\keywords{Hamilton cycle, hypergraph, absorbing method, regularity lemma}%
\begin{abstract}
For $1\le \ell<k/2$, we show that for sufficiently large $n$, every $k$-uniform hypergraph on $n$ vertices with minimum codegree at least $\frac n{2 (k-\ell)} $ contains a Hamilton $\ell$-cycle. This codegree condition is best possible and improves on work of H\`an and Schacht who proved an asymptotic result.
\end{abstract}

\maketitle

\section{Introduction}
A well-known result of Dirac \cite{Dirac} states that every graph $G$ on $n\ge 3$ vertices with minimum degree $\delta(G)\ge n/2$ contains a Hamilton cycle. 
In recent years, researchers have worked on extending this result to hypergraphs -- see recent surveys of \cite{KuOs14ICM, RR}.
Given $k\ge 2$, a $k$-uniform hypergraph (in short, \emph{$k$-graph}) consists of a vertex set
$V$ and an edge set $E\subseteq \binom{V}{k}$, where every edge is a $k$-element subset of $V$.
Given a $k$-graph $\h$ with a set $S$ of $d$ vertices (where $1 \le d \le k-1$) we define $\d_{\h} (S)$ to be the number of edges containing $S$ (the subscript $\h$ is omitted if it is clear from the context). The \emph{minimum $d$-degree $\delta _{d} (\h)$} of $\h$ is the minimum of $\d_{\h} (S)$ over all $d$-vertex sets $S$ in $\h$.  We refer to  $\delta _1 (\h)$ as the \emph{minimum vertex degree} and  $\delta _{k-1} (\h)$ the \emph{minimum codegree} of $\h$.
For $1\le \ell< k$, a $k$-graph is a called an \emph{$\ell$-cycle} if its vertices can be ordered cyclically such that each of its edges consists of $k$ consecutive vertices and every two consecutive edges (in the natural order of the edges) share exactly $\ell$ vertices. In $k$-graphs, a $(k-1)$-cycle is often called a \emph{tight} cycle while a $1$-cycle is often called a \emph{loose} cycle. We say that a $k$-graph contains a \emph{Hamilton $\ell$-cycle} if it contains an $\ell$-cycle as a spanning subhypergraph. 
Since a $k$-uniform $\ell$-cycle on $n$ vertices contains exactly $n/(k - \ell)$ edges, a necessary condition for a $k$-graph on $n$ vertices to contain a Hamilton $\ell$-cycle is that $k- \ell$ divides $n$.

Confirming a conjecture of Katona and Kierstead \cite{KK}, R\"odl, Ruci\'nski and Szemer\'edi \cite{RRS06, RRS08} showed that for any fixed $k$, every $k$-graph $\h$ on $n$ vertices with $\delta_{k-1}(\h)\ge n/2 + o(n)$ contains a tight Hamilton cycle. 
When $k-\ell$ divides both $k$ and $|V|$, a $(k-1)$-cycle on $V$ trivially contains an $\ell$-cycle on $V$. Thus the result in \cite{RRS08} implies that for all $1\le \ell<k$ such that $k- \ell$ divides $k$, every $k$-graph $\h$ on $n\in (k-\ell)\mathbb{N}$ vertices with $\delta_{k-1}(\h)\ge n/2 + o(n)$ contains a Hamilton $\ell$-cycle. 
It is not hard to see that these results are best possible up to the $o(n)$ term -- see Concluding Remarks for more discussion.
With long and involved arguments, R\"odl, Ruci\'nski and Szemer\'edi  \cite{RRS11} determined the minimum codegree threshold for tight Hamilton cycles in 3-graphs for sufficiently large $n$.
(Unless stated otherwise, we assume that $n$ is sufficiently large throughout the paper.)

Loose Hamilton cycles were first studied by K\"uhn and Osthus \cite{KO}, who proved that
every 3-graph on $n$ vertices with $\delta_2(\h)\ge n/4 + o(n)$ contains a loose Hamilton cycle. 
This was generalized to arbitrary $k$ and $\ell=1$ by Keevash, K\"uhn, Mycroft, and Osthus \cite{KKMO} and to arbitrary $k$ and arbitrary $\ell< k/2$ by H\`an and Schacht \cite{HS}.

\begin{theorem}\cite{HS}\label{thmHS}
Fix integers $k\ge 3$ and $1\le \ell <k/2$. Assume that $\gamma>0$ and $n\in (k-\ell)\mathbb N$ is sufficiently large. If $\h=(V, E)$ is a $k$-graph on $ n$ vertices such that $\delta_{k-1}(\h)\ge (\frac 1{2(k-\ell)}+\gamma)n$, then $\h$ contains a Hamilton $\ell$-cycle.
\end{theorem}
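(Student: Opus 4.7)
The plan is to use the absorbing method of R\"odl, Ruci\'nski, and Szemer\'edi, adapted to $\ell$-cycles with $\ell<k/2$. The proof splits into three main components: an absorbing path that swallows a small leftover set, a reservoir set used for connections, and an almost-covering step that handles the bulk of the vertices. Because $n/(k-\ell)$ edges must be packed into an $\ell$-cycle and each new edge contributes $k-\ell$ fresh vertices, the leftover/absorbed pieces must have size divisible by $k-\ell$; accordingly, I would define the atomic object to be absorbed as a $(k-\ell)$-tuple of vertices.

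First I would prove an \emph{Absorbing Lemma}. For each $(k-\ell)$-set $T\subseteq V$, call a short $\ell$-path $A_T$ on a constant number of vertices (with fixed end-$\ell$-sets) an \emph{absorber for $T$} if $V(A_T)\cup T$ spans an $\ell$-path with the \emph{same} end-$\ell$-sets. Using $\delta_{k-1}(\h)\ge(\tfrac1{2(k-\ell)}+\gamma)n$ together with a greedy construction that alternately picks $\ell$-shadow vertices and $(k-1)$-neighborhoods, I would show each $T$ has $\Omega(n^{c})$ absorbers for an appropriate constant $c$. A standard random sampling argument (choose each potential absorber independently) then yields a single \emph{absorbing $\ell$-path} $P_A$ of length $o(n)$ that can absorb any sufficiently small $(k-\ell)$-divisible set $W\subseteq V\setminus V(P_A)$. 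Next I would prove a \emph{Reservoir Lemma}: by another probabilistic argument, select a set $R$ of size $\Theta(\log n\cdot n^{1-\eta})$ or $\gamma n$ in which almost every pair of ordered $\ell$-sets is joined by many short $\ell$-paths using only $R$; the codegree condition ensures connecting $\ell$-paths of bounded length exist between almost any two $\ell$-ends.

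The heart of the proof is the \emph{Almost-Covering Lemma}: every $k$-graph with codegree $(\tfrac1{2(k-\ell)}+\gamma/2)n$ contains a collection of $o(n)$ vertex-disjoint $\ell$-paths covering all but $\eta n$ vertices. For this I would apply the weak hypergraph regularity lemma to $\h\setminus(V(P_A)\cup R)$, consider the cluster hypergraph, and exploit the fact that $\ell<k/2$: here the codegree threshold $\tfrac1{2(k-\ell)}$ corresponds to the existence of a (fractional) perfect matching in the cluster hypergraph, which in turn lifts to long $\ell$-paths inside dense regular $k$-tuples via a standard regularity-based building block (long $\ell$-paths in regular complexes). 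Crucially the regime $\ell<k/2$ makes the needed degree condition match a simple shadow/matching threshold rather than the much larger tight-cycle threshold $n/2$. Finally, I would stitch everything together: use short connections through $R$ to concatenate the almost-covering paths and $P_A$ into a single cycle, arrange that the unused vertices form a $(k-\ell)$-divisible set of size less than the absorbing capacity of $P_A$, and absorb them to obtain a Hamilton $\ell$-cycle.

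The main obstacle is the Almost-Covering step, because it is the place where the specific value $\tfrac{1}{2(k-\ell)}$ enters sharply; everything else (absorbing, reservoir, connection) needs only an arbitrarily small positive fraction $\gamma$ of codegree and can be done by somewhat routine probabilistic and greedy constructions. The covering step requires choosing the correct fractional relaxation (matchings in an auxiliary $(k-\ell)$-partite structure) and then transferring matchings to $\ell$-paths using a regular slice / regularity-based embedding; managing the divisibility of leftover vertices throughout, so that Step~4's remainder is compatible with the absorber's $(k-\ell)$-divisibility requirement, is the technical point where the argument is delicate.
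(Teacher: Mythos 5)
Your outline is exactly the absorbing-method scheme that the paper follows (and that H\`an and Schacht used for this cited theorem): an Absorbing Lemma for $(k-\ell)$-divisible leftover sets, a Reservoir Lemma for connections, and an almost-covering Path-cover Lemma proved via the weak hypergraph regularity lemma and a tiling/fractional-matching argument in the cluster hypergraph, with the final stitching and divisibility bookkeeping as you describe. Since this matches the paper's approach (Lemmas~\ref{lemA}, \ref{lemR}, \ref{lemP} in the nonextremal argument, where the $\gamma n$ slack removes any need for an extremal case), no further comparison is needed.
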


Later K\"uhn, Mycroft, and Osthus \cite{KMO} proved that whenever $k-\ell$ does not divide $k$, every $k$-graph on $n$ vertices with
$\delta_{k-1}(\h)\ge \frac{n}{\lceil \frac{k}{k-\ell} \rceil (k-\ell)} +o(n)$ contains a Hamilton $l$-cycle. 
This generalizes Theorem~\ref{thmHS} because $\lceil k/(k-\ell) \rceil =2 $ when $\ell< k/2$. 
R\"odl and Ruci\'nski \cite[Problem 2.9]{RR} asked for the exact minimum codegree threshold for Hamilton $\ell$-cycles in $k$-graphs. The $k=3$ and $\ell=1$ case was answered by Czygrinow and Molla \cite{CzMo} recently. In this paper we determine this threshold for all $k\ge 3$ and $\ell<k/2$.

\begin{theorem}[Main Result] \label{thmmain}
Fix integers $k\ge 3$ and $1\le \ell <k/2$. Assume that $n\in (k-\ell)\mathbb N$ is sufficiently large.
If $\h=(V, E)$ is a $k$-graph on $n$ vertices such that
\begin{equation}\label{eqdeg2}
\delta_{k-1}(\h) \ge \frac n{2 (k-\ell)},
\end{equation}
then $\h$ contains a Hamilton $\ell$-cycle.
\end{theorem}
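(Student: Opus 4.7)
The proof follows the absorbing-method-plus-stability paradigm now standard for exact Dirac-type results on hypergraphs.

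The tight example for \eqref{eqdeg2} is obtained by partitioning $V=A\cup B$ with $|A|=\lceil n/(2(k-\ell))\rceil-1$ and declaring $E(\h)$ to be all $k$-sets meeting $A$. In an $\ell$-cycle with $\ell<k/2$ each vertex lies in at most two edges, since $k/(k-\ell)<2$, so the $n/(k-\ell)$ edges of a Hamilton $\ell$-cycle force $|A|\ge n/(2(k-\ell))$. Fix $0<\xi\ll 1$ depending only on $k,\ell$. We call $\h$ \emph{extremal} if there exists $B\subset V$ with $|B|=\lfloor(1-\tfrac{1}{2(k-\ell)})n\rfloor$ and $e(\h[B])\le\xi n^k$; otherwise \emph{non-extremal}. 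We handle the two cases separately.

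In the non-extremal case the strategy is the absorbing method, which requires three ingredients: (i) an \emph{absorbing} $\ell$-path $P_0$ on $o(n)$ vertices such that any sufficiently small, divisible $T\subseteq V\setminus V(P_0)$ can be swallowed into a spanning $\ell$-path of $V(P_0)\cup T$ with the same end-$\ell$-tuples as $P_0$; (ii) a small \emph{reservoir} $R\subseteq V\setminus V(P_0)$ through which any two disjoint ordered $\ell$-tuples may be joined by a short $\ell$-path using vertices of $R$; (iii) an \emph{almost-covering} $\ell$-path of $V\setminus(V(P_0)\cup R)$ obtained by adapting the proof of Theorem~\ref{thmHS}. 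For (i) we show that every $(k-\ell)$-set has many absorbers by combining \eqref{eqdeg2} with non-extremality (the latter yielding, via a supersaturation-style argument, enough crossing edges to build absorbers of all types), and then sample a short random $\ell$-path and adjust. Joining the almost-cover through $R$ and closing up with $P_0$ produces the Hamilton $\ell$-cycle.

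In the extremal case we set $A:=V\setminus B$; then $|A|$ is essentially $n/(2(k-\ell))$ and \eqref{eqdeg2} forces almost every edge of $\h$ to meet $A$. Consequently every vertex of $A$ must be used in exactly two edges of any Hamilton $\ell$-cycle, i.e.\ must occupy one of the $n/(k-\ell)$ intersection $\ell$-tuples between consecutive edges. We construct the cycle by first extracting a short initial $\ell$-path that handles the $O(1)$ ``atypical'' vertices (those whose links deviate from the canonical extremal structure), and then extending greedily using the many crossing edges between $A$ and the near-independent $B$, respecting the rigid block pattern of $A$- and $B$-vertices imposed by the cycle.

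The main obstacle is the extremal case. The counting budget is essentially zero: $|A|$ is pinned to within $O(1)$ of $n/(2(k-\ell))$ and each $A$-vertex must be used in exactly two edges, so there is no slack to absorb errors. Embedding the atypical vertices and managing the divisibility of block sizes while preserving connectivity of the would-be Hamilton cycle constitutes the most delicate step.
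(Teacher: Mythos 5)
Your high-level plan (extremal/non-extremal dichotomy plus the absorbing method) matches the paper, but two of your steps hide the actual content of the proof. In the non-extremal case, the crux is step (iii), which you dismiss as ``adapting the proof of Theorem~\ref{thmHS}.'' That adaptation is not routine: after setting aside the absorbing path and the reservoir, the remaining hypergraph only satisfies $\delta_{k-1}\ge(\frac{1}{2(k-\ell)}-\gamma)n$, which is \emph{below} the threshold of Theorem~\ref{thmHS}, and below it the H\`an--Schacht path-cover argument (an almost perfect $\F_{k,\ell}$-tiling of the cluster hypergraph after weak regularity) can genuinely fail --- indeed it fails on near-extremal configurations. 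The paper's main new work in this case is exactly a path-cover lemma with a built-in escape to extremality (Lemma~\ref{lemP}), proved by reducing to an almost perfect $\Y_{k,2\ell}$-tiling of the cluster hypergraph and showing that the absence of such a tiling forces the extremal structure (Lemma~\ref{lem:F}). You instead invoke non-extremality only to build absorbers, where it is not needed (Lemma~\ref{lemA} requires only a linear codegree), and you offer no mechanism for the almost path cover at sub-threshold codegree; this is a missing idea, not a detail.

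In the extremal case your plan is also too thin to work as stated. The ``atypical'' vertices are not $O(1)$ in number: the sets $A\setminus A'$, $B\setminus B'$, $V_0$ can have size $\Theta(\epsilon n)$, so incorporating them already requires building and connecting linearly many short paths with controlled (typical) ends, plus a careful balancing step ensuring $|B_1|=(2k-2\ell-1)|A_1|+\ell$ (which depends on the parity of $n/(k-\ell)$ and uses the exact bound \eqref{eqdeg2}, e.g.\ in Claim~\ref{clm:Bpath}). More seriously, ``extending greedily using the many crossing edges'' cannot close a \emph{spanning} $\ell$-cycle when, as you yourself note, every $A$-vertex must be used exactly twice and there is no slack: a greedy extension has no reason to terminate using all vertices and ending at the prescribed $\ell$-sets. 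The paper overcomes this with a non-greedy endgame (Lemma~\ref{lem:H}): an auxiliary $(k-1)$-graph on the $B$-side whose edges have high degree into $A$, a tight Hamilton path in it supplied by the Glebov--Person--Weps theorem (Theorem~\ref{thm:GPW}), and a Hall-type perfect matching assigning $A$-vertices to the resulting link slots. Without some substitute for this global matching step, your extremal case is a description of the difficulty rather than a proof.
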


The following simple construction \cite[Proposition 2.2]{KMO} shows that Theorem~\ref{thmmain} is best possible, 
and the aforementioned results in \cite{HS, KKMO, KMO, KO} are asymptotically best possible.
Let $\h_0=(V, E)$ be an $n$-vertex $k$-graph in which $V$ is partitioned into sets $A$ and $B$ such that $|A| = \left\lceil \frac{n}{\lceil \frac{k}{k- \ell} \rceil (k- \ell)} \right\rceil - 1$. The edge set $E$ consists of all $k$-sets that intersect $A$. It is easy to see that $\delta_{k-1}(\h_0)= |A|$.
However, an $\ell$-cycle on $n$ vertices has $n/(k-\ell)$ edges and every vertex on such a cycle lies in at most $\lceil \frac{k}{k-\ell} \rceil$ edges. Since $\lceil \frac{k}{k-\ell} \rceil |A|< n/(k-\ell)$, $\h_0$ contains no Hamilton $\ell$-cycle.

A related problem was studied by Bu\ss, H\`an, and Schacht \cite{BHS}, who proved that every 3-graph $\h$ on $n$ vertices with minimum vertex degree $\delta_1(\h)\ge (\frac 7{16}+o(1))\binom n2$ contains a loose Hamilton cycle. Recently we \cite{HZ1} improved this to an exact result.

\smallskip
Using the typical approach of obtaining exact results, our proof of  Theorem~\ref{thmmain} consists of an \emph{extremal case} and a \emph{nonextremal case}.

\begin{definition}
Let $\Delta>0$, a $k$-graph $\h$ on $n$ vertices is called $\Delta$-extremal if there is a set $B\subset V(\h)$, such that $|B| = \lfloor \frac{2(k-\ell)-1}{2(k-\ell)}n \rfloor $ and $e(B)\le \Delta n^k$.
\end{definition}

\begin{theorem}[Nonextremal Case]\label {lemNE}
For any integer $k\ge 3$, $1\le \ell <k/2$ and $0<\Delta<1$ there exists $\r>0$ such that the following holds. Suppose that $\h$ is a $k$-graph on $n$ vertices such that $n\in (k-\ell)\mathbb N$ is sufficiently large. If $\h$ is not $\Delta$-extremal and satisfies $\delta_{k-1}(\h)\ge (\frac 1{2(k-\ell)}-\r)n$, then $\h$ contains a Hamilton $\ell$-cycle.
\end{theorem}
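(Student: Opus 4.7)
The plan is to implement the absorbing method as a refinement of the scheme of H\`an and Schacht that underlies Theorem~\ref{thmHS}, using the non-extremal hypothesis to compensate for the $\gamma n$ deficit in the codegree condition. I choose auxiliary constants $1/n \ll \gamma \ll \eta \ll \Delta$, and carry out the argument in four steps: absorber construction, reservoir construction, almost-cover via weak hypergraph regularity, and final assembly.

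First I would prove an absorbing lemma producing an $\ell$-path $P_0$ of length $O(\eta n)$ with the property that, for every set $W$ disjoint from $V(P_0)$ with $|W|\le \eta^2 n$ and $(k-\ell)\mid |W|$, the hypergraph $\mathcal{H}[V(P_0)\cup W]$ contains an $\ell$-path on $V(P_0)\cup W$ with the same end-$\ell$-tuples as $P_0$. The standard strategy shows that each $(k-\ell)$-set admits $\Omega(n^{C})$ short absorbing configurations, and then samples a random subfamily which is concatenated through the reservoir. Next I would extract a reservoir $R \subseteq V\setminus V(P_0)$ of size $\Theta(\eta n)$ with a connecting property: any two typical ordered $(k-1)$-tuples can be joined by a short $\ell$-path using interior vertices from $R$. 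Both lemmas follow from the codegree hypothesis by random-sampling arguments essentially as in \cite{HS}, after one small deletion to guarantee that the target $(k-1)$-tuples are connectable.

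With $P_0$ and $R$ in hand, set $V' = V\setminus (V(P_0)\cup R)$ and apply the weak hypergraph regularity lemma to $\mathcal{H}[V']$. Since $|V(P_0)\cup R| = O(\eta n)$, the reduced $k$-graph $\mathcal{R}$ inherits a codegree condition of at least $\bigl(\frac{1}{2(k-\ell)}-2\eta\bigr)|V(\mathcal{R})|$, and non-$\Delta$-extremality of $\mathcal{H}$ transfers to an analogous property of $\mathcal{R}$. From this I would produce a near-perfect fractional $\ell$-path cover of $\mathcal{R}$, convert it cluster by cluster into $O(1)$ vertex-disjoint $\ell$-paths in $\mathcal{H}[V']$ covering all but at most $\eta^2 n/2$ vertices, and then use the connecting lemma through $R$ to splice these paths and $P_0$ into a single $\ell$-cycle $C$ missing a set $W$ of size at most $\eta^2 n$ with $(k-\ell)\mid |W|$. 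Applying the absorbing property of $P_0$ to swallow $W$ yields the desired Hamilton $\ell$-cycle.

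The hardest step is the conversion of non-$\Delta$-extremality into structural information on $\mathcal{R}$ strong enough to force the almost-cover. At the asymptotic threshold $\frac{1}{2(k-\ell)}+\gamma$, H\`an and Schacht can extract an almost-perfect matching in $\mathcal{R}$ because every $(k-1)$-tuple has codegree strictly above $n/(2(k-\ell))$; here the codegree is \emph{below} that threshold by $\gamma n$, so a direct matching argument can fail. A stability version of the fractional matching lemma is needed, showing that failure of a near-perfect fractional matching in $\mathcal{R}$ forces $\mathcal{H}$ into the $\Delta$-extremal configuration and thus contradicts the hypothesis. A secondary challenge is the absorbing lemma itself: the same codegree deficit means one must invoke non-extremality (or a weaker local density substitute) to guarantee enough absorbing configurations around every $(k-\ell)$-set of vertices; without this input the standard counting that underlies $\Omega(n^C)$ absorbers would fall just short.
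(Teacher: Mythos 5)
Your high-level framework (absorber, reservoir, regularity-based almost cover, splice and absorb) matches the paper's, but the proposal has a genuine gap exactly where the paper's new content lies. You defer the crucial step to ``a stability version of the fractional matching lemma'' for the reduced $k$-graph, i.e.\ you assert that failure of a near-perfect (fractional) matching in the cluster hypergraph must force $\Delta$-extremality of $\mathcal{H}$ — but you neither prove this nor explain how such a matching would be converted into an almost $\ell$-path cover. The conversion is not routine when $\ell<k/2$: a canonical $\ell$-path in a regular $k$-tuple with equal cluster sizes uses the clusters unevenly (the $2\ell$ ``overlap'' clusters are consumed at roughly half the rate of the others), so a matching of the cluster hypergraph cannot be covered almost perfectly by $\ell$-paths. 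This is precisely why H\`an and Schacht tile the cluster hypergraph with the unbalanced $k$-graph $\F_{k,\ell}$, and why the present paper instead reduces to $\Y_{k,2\ell}$-tilings (two edges sharing $2\ell$ vertices) together with a cluster-splitting step (Lemma~\ref{lem:path}, which needs cluster sizes in ratio $1:2$). The heart of the paper is then the dichotomy of Lemma~\ref{lem:F}: under codegree $(\frac{1}{2k-b}-\gamma)n$ for almost all $(k-1)$-sets, either an almost perfect $\Y_{k,b}$-tiling exists or one finds a set $B$ of the extremal size with $e(B)<6\gamma n^k$; this is proved by analyzing a maximum $\Y_{k,b}$-tiling and the vertices of high degree into the uncovered set. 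Your proposal names the analogous statement as the hardest step and leaves it unproven, so the main difficulty of Theorem~\ref{lemNE} is not addressed.

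A smaller but real misconception: you claim the codegree deficit forces you to invoke non-extremality already in the absorbing lemma. For $\ell<k/2$ this is unnecessary — the absorbing and reservoir lemmas of H\`an and Schacht (Lemmas~\ref{lemA} and~\ref{lemR}) require only a codegree condition $\delta_{k-1}(\mathcal{H})\ge \rho_1 n$ linear in $n$ with an arbitrarily small constant, which the hypothesis $\delta_{k-1}(\mathcal{H})\ge(\frac{1}{2(k-\ell)}-\gamma)n$ supplies with room to spare. Non-extremality enters only in the path-cover step. So the place you flag as a ``secondary challenge'' is in fact standard, while the place you treat as a black box is the actual theorem to be proved.
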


\begin{theorem}[Extremal Case]\label {lemE}
For any integer $k\ge 3$, $1\le \ell <k/2$ there exists $\Delta>0$ such that the following holds. Suppose $\h$ is a $k$-graph on $n$ vertices such that $n\in (k-\ell)\mathbb N$ is sufficiently large. If $\h$ is $\Delta$-extremal and satisfies \eqref{eqdeg2}, then $\h$ contains a Hamilton $\ell$-cycle.
\end{theorem}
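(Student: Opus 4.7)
The plan is to exploit the extremal structure directly to construct a Hamilton $\ell$-cycle, without invoking regularity. First I will clean up the partition $V(\h) = A \cup B$ given by the hypothesis: starting from $A := V(\h) \setminus B$, I iteratively reclassify any vertex of $B$ that lies in too many edges inside $B$ (moving it to $A$) and any vertex of $A$ with too few $(k-1)$-neighborhoods in $B$ (moving it to $B$), until stable. Using $e(B) \le \Delta n^k$ together with the codegree hypothesis, the process should stabilize at a refined partition $V(\h) = A' \cup B'$ with $|A'| = n/(2(k-\ell)) \pm O(\sqrt{\Delta}\,n)$, $e(B') = O(\Delta n^k)$, every vertex of $B'$ in at most $O(\sqrt{\Delta}\,n^{k-1})$ $B'$-internal edges, and every vertex of $A'$ containing at least $(1-O(\sqrt{\Delta}))\binom{|B'|}{k-1}$ $(k-1)$-subsets of $B'$ in its neighborhood.

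Next I identify the forced combinatorial skeleton. Let $m := n/(k-\ell)$. Since $\ell < k/2$, each vertex of the cycle lies in at most $\lceil k/(k-\ell)\rceil = 2$ edges, and since $e(B')$ is tiny, essentially every edge must meet $A'$. Combined with $|A'| \approx m/2$, the $m$ edges are forced to pair up so that each pair $(e_{2i-1},e_{2i})$ shares its $\ell$-intersection (containing one $A'$-vertex) while the alternate intersection $e_{2i}\cap e_{2i+1}$ lies entirely in $B'$. The target cycle then decomposes into $m/2$ blocks of $2(k-\ell)$ consecutive vertices, each of the form: $k-2\ell$ free $B'$-vertices, an $\ell$-intersection holding one $A'$-vertex, another $k-2\ell$ free $B'$-vertices, and a pure-$B'$ $\ell$-intersection. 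Three subcases then arise depending on whether $|A'|$ equals, exceeds, or falls short of $m/2$.

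Assembly proceeds in three steps. Step 1: set aside a small absorbing sub-path inside $B'$ capable of swallowing any small set of leftover $B'$-vertices, constructed by standard repeated extensions (the $A'$-to-$B'$ near-completeness drives each extension). Step 2: greedily realize the block skeleton by picking, for each $A'$-vertex, its surrounding $2(k-\ell)-1$ $B'$-vertices one pair of edges at a time, checking that the two required edges of $\h$ exist (sparsity of $B'$-internal edges and high $A'$-to-$B'$ codegree keep the greedy stage alive). Step 3: fold the unused $B'$-vertices into the absorbing reserve and close the cycle with one last compatible block. The subcase $|A'| > m/2$ is handled by placing each surplus $A'$-vertex at a free position of some block, so that the enclosing edge carries two $A'$-vertices, one of which is not an intersection vertex.

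The main obstacle is the subcase $|A'| < m/2$, where the cycle must contain exactly $m - 2|A'|$ edges lying entirely inside $B'$; these edges are scarce and their placement has essentially no slack. The codegree hypothesis applied to $(k-1)$-subsets of $B'$ forces $e(B')$ to be at least roughly $(m/2-|A'|)\binom{|B'|}{k-1}/k$, and together with the fact that each vertex of $B'$ is in few $B'$-internal edges, a greedy argument yields a matching of $B'$-only edges of the required size. I then plan to insert these matching edges at prescribed positions of the cycle---each replacing one pair in the block structure---and build the surrounding blocks around them using the same greedy procedure. A secondary issue is the parity of $m$: if $m$ is odd the clean pairing loses one block, which the absorbing reserve is designed to patch.
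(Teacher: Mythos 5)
Your skeleton analysis is sound and matches the structure the paper also exploits (blocks of $2(k-\ell)$ vertices, each pair of edges sharing an $\ell$-set that contains exactly one $A'$-vertex, the alternate $\ell$-intersections lying in $B'$), and you correctly identify that the exact codegree bound is what produces the forced pure-$B'$ edges when the $B$-side is too large. However, there is a genuine gap at the heart of your assembly: Steps 1 and 3 rely on an ``absorbing sub-path inside $B'$ capable of swallowing any small set of leftover $B'$-vertices,'' and such an object cannot exist in general in the extremal case. Absorbing $k-\ell$ extra $B'$-vertices into an $\ell$-path adds one edge to the path, and in the extremal situation there is no budget for that edge: every vertex of an $\ell$-cycle with $\ell<k/2$ lies in at most two edges, and the count $|A'|\approx n/(2(k-\ell))$ forces (up to the $O(q)$ pure-$B'$ edges that the codegree condition just barely supplies) every edge to contain exactly one $A'$-vertex, sitting in an intersection. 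So any inserted absorbing edge must either be a pure-$B'$ edge through the \emph{prescribed} leftover vertices --- but $e(B')\le \Delta n^k$ and may be essentially zero, and in particular the $B'$-degree of a specific $(k-\ell)$-set can vanish --- or it must consume an $A'$-vertex that is not there to spare. For the same reason the ``$A'$-to-$B'$ near-completeness'' cannot drive extensions of a path whose vertices lie in $B'$. Consequently a greedy block-building stage followed by absorption of leftovers cannot be made to terminate in a spanning cycle; the construction has to be exact from the outset. This is precisely why the paper first builds one short path $\mathcal{Q}$ that swallows the atypical vertices, the forced pure-$B$ edges, and the rounding, so that afterwards $|B_1|=(2k-2\ell-1)|A_1|+\ell$ holds \emph{exactly}, and then covers $A_1\cup B_1$ exactly by reducing to a tight Hamilton path in an auxiliary dense $(k-1)$-graph on $B_1$ (via the Glebov--Person--Weps theorem) together with a perfect matching, via Hall's theorem, assigning the $A_1$-vertices to the block positions. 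Your proposal has no substitute for this spanning/endgame step, and ``greedy plus absorb'' is exactly the mechanism that fails in the extremal case.

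A secondary issue is your treatment of the surplus case $|A'|>m/2$: placing a surplus $A'$-vertex into a free slot of a block forces an edge containing two $A'$-vertices through a prescribed pair, and the hypotheses (near-complete link of each single $A'$-vertex into $\binom{B'}{k-1}$, plus the codegree bound) do not control the joint link of a pair of $A'$-vertices, so such edges through a given surplus vertex need not exist. The paper avoids this by instead inserting each surplus $A'$-vertex as a \emph{non-intersection} vertex of an edge of type $A'B'^{k-1}$ (the path-extension step of Claim 3.6/Claim~\ref{clm:extend}-type), which changes the consumption ratio from $1:(2k-2\ell-1)$ to $1:(k-\ell-1)$ and only needs single-vertex typicality. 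Your identification of the matching of pure-$B'$ edges in the deficient case is essentially the paper's Claim about disjoint edges in $B'$, so that part is fine; the missing ideas are the exact balancing via a short path and a genuine exact-cover argument to finish.
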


Theorem \ref{thmmain} follows from Theorem \ref{lemNE} and \ref{lemE} immediately by choosing $\Delta$ from Theorem \ref{lemE}.

Let us compare our proof with those in the aforementioned papers.
There is no extremal case in \cite{HS,KKMO,KMO,KO} because only asymptotic results were proved. Our Theorem~\ref{lemE} is new and more general than \cite[Theorem 3.1]{CzMo}. Following previous work \cite{HS, KMO, RRS06, RRS08, RRS11}, we prove Theorem~\ref{lemNE} by using the absorbing method initiated by R\"odl, Ruci\'nski and Szemer\'edi. More precisely, we find the desired Hamilton $\ell$-cycle by applying the Absorbing Lemma (Lemma~\ref{lemA}), the Reservoir Lemma (Lemma~\ref{lemR}), and the Path-cover Lemma (Lemma~\ref{lemP}). In fact, when $\ell< k/2$, the Absorbing Lemma and the Reservoir Lemma are not very difficult and already proven in \cite{HS} (in contrast, when $\ell> k/2$, the Absorbing Lemma in \cite{KMO} is more difficult to prove).
Thus the main step is to prove the Path-cover Lemma.
As shown in \cite{HS,KMO}, after the Regularity Lemma is applied, it suffices to prove that the cluster $k$-graph $\K$ can be tiled almost perfectly by the $k$-graph $\F_{k,\ell}$, whose vertex set consists of disjoint sets $A_1, \dots, A_{a-1}, B$ of size $k-1$, and whose edges are all the $k$-sets of the form $A_i\cup \{b\}$ for $i=1, \dots, a-1$ and all $b\in B$, where $a=\lceil \frac{k}{k-\ell} \rceil (k-\ell)$.
In this paper we reduce the problem to tile $\K$ with a much simpler $k$-graph $\Y_{k, 2\ell}$, which consists of two edges sharing $2\ell$ vertices. Because of the simple structure of $\Y_{k, 2\ell}$, we can easily find an almost perfect $\Y_{k, 2\ell}$-tiling unless $\K$ is in the extremal case (thus the original $k$-graph $\h$ is in the extremal case).
Interestingly $\Y_{3, 2}$-tiling was studied in the very first paper \cite{KO} on loose Hamilton cycles but as a separate problem. Our recent paper \cite{HZ1} indeed used $\Y_{3, 2}$-tiling as a tool to prove the corresponding path-cover lemma.
On the other hand, the authors of \cite{CzMo} used a different approach (without the Regularity Lemma) to prove the Path-tiling Lemma (though they did not state such lemma explicitly).

The rest of the paper is organized as follows: we prove Theorem~\ref{lemNE} in Section 2 and Theorem~\ref{lemE} in Section 3, and give concluding remarks in Section 4.

\smallskip
\textbf{Notation.} Given an integer $k\ge 0$, a $k$-set is a set with $k$ elements. For a set $X$, we denote by $\binom{X}{k}$ the family of all $k$-subsets of $X$. Given a $k$-graph $\h$ and a set $A \subseteq V(\h)$, we denote by $e_{\h}(A)$ the number of the edges of $\h$ in $A$. We often omit the subscript that represents the underlying hypergraph if it is clear from the context.
Given a $k$-graph $\h$ with two vertex sets $S, R$ such that $|S|<k$, we denote by $\deg_{\h} (S, R)$ the number of $(k-|S|)$-sets $T\subseteq R$ such that $S\cup T$ is an edge of $\h$ (in this case $T$ is called a \emph{neighbor} of $S$).
We define $\overline{\deg}_{\h}(S, R) = \binom{ |R\setminus S| }{k - |S|} - \deg(S, R)$ as the number of \emph{non-edges} on $S\cup R$ that contain $S$. When $R= V(\h)$ (and $\h$ is obvious), we simply write $\deg(S)$ and $\overline\deg(S)$. When $S= \{v\}$, we use $\deg(v, R)$ instead of $\deg(\{v\}, R)$.

A $k$-graph $\mathcal P$ is an \emph{$\ell$-path} if there is an ordering $(v_1, \dots, v_{t})$ of its vertices such that every edge consists of $k$ consecutive vertices and two consecutive edges intersect in exactly $\ell$ vertices. Note that this implies that $k-\ell$ divides $t-\ell$.
In this case we write $\mathcal{P} = v_1 \cdots v_t$ and call two $\ell$-sets $\{v_1, \dots, v_{\ell}\}$ and $\{v_{t-\ell+1}, \dots, v_t\}$ \emph{ends} of $\mathcal P$.

\section{Proof of Theorem \ref{lemNE}}
In this section we prove Theorem \ref{lemNE} by following the approach in \cite{HS}.

\subsection{Auxiliary lemmas and Proof of Theorem \ref{lemNE}}

We need \cite[Lemma 5]{HS} and \cite[Lemma 6]{HS} of H\`an and Schacht, in which only a linear codegree condition is needed.
Given a $k$-graph $\h$ with an $\ell$-path $\mathcal P$ and a vertex set $U\subseteq V(\h)\setminus V(\mathcal{P})$ with $|U|\in (k-\ell)\mathbb N$ , we say that $\mathcal P$ \emph{absorbs} $U$ if there exists an $\ell$-path $\mathcal Q$ of $\h$ with $V(\mathcal Q) = V(\mathcal P)\cup U$ such that $\mathcal P$ and $\mathcal Q$ have exactly the same ends.

\begin{lemma}[Absorbing lemma, \cite{HS}] \label{lemA}
For all integers $k\ge 3$ and $1\le \ell<k/2$ and every $\r_1>0$ there exist $\eta>0$ and an integer $n_0$ such that the following holds. Let $\h$ be a $k$-graph on $n\ge n_0$ vertices with $\delta_{k-1}(\h)\ge \r_1 n$. Then $\h$ contains an absorbing $\ell$-path $\mathcal P$ with $|V(\mathcal P)|\le \r_1^5 n$ that can absorb  any subset $U\subset V(\h) \setminus V(\mathcal P)$ of size $|U|\le \eta n$ and $|U|\in (k-\ell)\mathbb N$.
\end{lemma}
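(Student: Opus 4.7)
The plan is to follow the absorbing method of R\"odl, Ruci\'nski and Szemer\'edi in its now standard form, taking advantage of $\ell < k/2$ to work with short loose gadgets. For a $(k-\ell)$-set $T \subset V(\h)$, call a constant-size vertex set $A \subset V(\h)\setminus T$ a \emph{$T$-absorber} if $\h[A]$ contains a spanning $\ell$-path $P^{-}$ and $\h[A\cup T]$ contains a spanning $\ell$-path $P^{+}$ with the same two $\ell$-ends as $P^{-}$. The five steps will be: (i) design one such gadget of fixed constant size $m=m(k,\ell)$; (ii) count at least $\gamma_2 n^{m}$ absorbers per $T$ using codegree; (iii) randomly sparsify to a pairwise disjoint family $\F'$ of size $O(n)$ retaining $\Omega(n)$ absorbers for every $T$; (iv) stitch the members of $\F'$ into one $\ell$-path $\mathcal P$ via a Connecting Lemma; and (v) absorb any leftover $U$ one $(k-\ell)$-tuple at a time.

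A minimal gadget uses the assumption $2\ell < k$ crucially: take an edge $e = S_1 \cup M_1 \cup S_2 \in E(\h)$ with disjoint $\ell$-subsets $S_1, S_2$ playing the role of the ends of $P^{-}$, and demand that $e \cup T$ admit an ordering as a $2$-edge $\ell$-path with the same ends $S_1, S_2$---equivalently, there exist $e_1, e_2 \in E(\h)$ with $e_1 \cup e_2 = e \cup T$, $|e_1 \cap e_2| = \ell$, $S_i \subset e_i$, and $(e_1 \cap e_2) \cap (S_1 \cup S_2) = \emptyset$. To gain counting room one pads this core with a bounded number of additional ``buffer'' edges on either side, producing an absorber of fixed size $m=m(k,\ell)$ admitting a greedy construction with exactly one vertex of freedom at each step. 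The counting then iterates $\delta_{k-1}(\h)\ge \gamma_1 n$: whenever a $(k-1)$-subset of a target edge has been committed, its last vertex may be chosen from at least $\gamma_1 n$ options, and stringing such extensions together along the gadget's skeleton yields at least $\gamma_2 n^{m}$ absorbers per $T$ for some $\gamma_2 = \gamma_2(\gamma_1,k,\ell) > 0$.

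For the sparsification, include each candidate absorber independently with probability $p = c\, n^{1-m}$ for a sufficiently small constant $c > 0$; by Markov and Chernoff, with positive probability the chosen family $\F$ has size at most $\gamma_1^5 n/2$, has only $O(c|\F|)$ intersecting pairs, and retains at least $c\gamma_2 n/2$ absorbers per $T$. Deleting one absorber from each intersecting pair yields a vertex-disjoint subfamily $\F'$ of the same order. For the stitching, invoke a standard Connecting Lemma---provable under any linear codegree condition---asserting that any two disjoint $\ell$-sets can be joined by an $\ell$-path of bounded length inside $\h$ minus any fixed set of size $o(n)$; chaining the members of $\F'$ end-to-end via this lemma produces the single absorbing $\ell$-path $\mathcal P$, whose total vertex count is $O(n) \le \gamma_1^5 n$ once the parameters are chosen small enough.

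Finally, given $U \subset V(\h)\setminus V(\mathcal P)$ with $|U| \le \eta n$ and $(k-\ell)\mid |U|$, arbitrarily partition $U$ into $(k-\ell)$-tuples $T_1,\dots,T_r$ with $r \le \eta n/(k-\ell)$; choosing $\eta$ small enough compared to $c\gamma_2$ so that $r$ is well below the number of absorbers retained per tuple, one processes the $T_i$ sequentially, each time replacing $P^{-}$ with $P^{+}$ inside a fresh absorber, and arrives at an $\ell$-path on $V(\mathcal P) \cup U$ with the same two ends as $\mathcal P$. The main obstacle will be the gadget count in step (ii): codegree controls only $(k-1)$-sets while each gadget imposes two simultaneous $k$-set constraints, so the greedy construction must be ordered so that each edge constraint has exactly one free vertex when checked---and it is precisely the hypothesis $\ell < k/2$ that keeps the ends and inner overlaps of $P^{\pm}$ separable and thus permits such an ordering to exist.
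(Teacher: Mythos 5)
You should note first that the paper does not actually prove Lemma~\ref{lemA}: it imports it from [HS]. Your overall skeleton (absorbers for $(k-\ell)$-sets, a count of $\Omega(n^m)$ absorbers per set via the codegree condition, random sparsification to a disjoint linear-sized family, stitching by a connecting lemma, then sequential absorption) is exactly the standard route, and steps (iii)--(v) are fine as sketched. The genuine gap is in step (ii), precisely the step you yourself flag as the main obstacle. With your core gadget --- $P^-$ a single edge $e=S_1\cup M\cup S_2$ and $P^+$ a two-edge path $e_1,e_2$ on $e\cup T$ with the same ends --- the free vertices are exactly the $k$ vertices of $e$ (the vertices of $T$ are fixed), and the three edge constraints satisfy $e\subseteq e_1\cup e_2$ and $e_i\setminus T\subseteq e$. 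Hence every free vertex of each constraint lies in another of the three constraints, and no ordering of the free vertices can close each constraint with its own dedicated last vertex: if each of $e,e_1,e_2$ had a distinct closing vertex chosen after all its other free vertices, the digraph sending each constraint to the constraints containing its closing vertex would be acyclic on three nodes with every out-degree at least one, which is impossible. So the last vertex chosen must complete two edges simultaneously, i.e.\ lie in the intersection of two prescribed neighborhoods of $(k-1)$-sets; a hypothesis $\delta_{k-1}(\h)\ge \gamma_1 n$ with small $\gamma_1$ gives no control over such an intersection. The assertion that ``$\ell<k/2$ permits such an ordering to exist'' is therefore false for your gadget, and the unspecified ``buffer edges'' do not repair it: padding both paths by common edges leaves every edge of $P^-$ not shared with $P^+$ entirely inside $\bigcup E(P^+)$, so the same obstruction recurs.

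The fix is to redesign the absorber so that $P^-$ and $P^+$ share an entire end edge and $T$ is inserted only near the other end. For instance, take $P^+=e_1e_2e_3$ with $T\subseteq (e_1\cup e_2)\setminus e_3$ and $P^-=f\,e_3$, where $f$ consists of the vertices of $(e_1\cup e_2)\setminus(T\cup e_3)$ together with an $\ell$-subset of $e_3\setminus S_2$; both paths then have ends $S_1\subset e_1$ and $S_2\subset e_3$. Now the four constraints $e_1,e_2,e_3,f$ can be closed by four distinct vertices in a suitable order, each time completing a single $(k-1)$-set by the codegree condition, which yields $\Omega(n^{2k-\ell})$ absorbers for every $(k-\ell)$-set $T$. (For $k=3$, $\ell=1$ and $T=\{a,b\}$: require $\{u,x,a\},\{a,b,y\},\{y,z,w\},\{u,x,z\}\in E(\h)$, closed in the order $x\in N(u,a)$, $y\in N(a,b)$, $z\in N(u,x)$, $w\in N(y,z)$; here $P^-=\{u,x,z\}\{z,y,w\}$ and $P^+$ share the edge $\{y,z,w\}$ and the ends $u,w$.) With an absorber of this kind --- which is the sort of structure the proof in [HS] is built on --- the rest of your argument goes through essentially as you wrote it.
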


\begin{lemma}[Reservoir lemma, \cite{HS}]\label{lemR}
For all integers $k\ge 3$ and $1\le \ell<k/2$ and every $0<d, \r_2<1$ there exists an $n_0$ such that the following holds. Let $\h$ be a $k$-graph on $n>n_0$ vertices with $\delta_{k-1}(\h)\ge dn$, then there is a set $R$ of size at most $\r_2 n$ such that for all $(k-1)$-sets $S\in \binom V{k-1}$ we have $\deg(S, R)\ge d\r_2 n/2$.
\end{lemma}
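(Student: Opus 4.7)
The plan is a single-shot probabilistic construction: include each vertex of $V(\h)$ in $R$ independently with some fixed probability $p$, then show via Chernoff's inequality and a union bound that, with positive probability, the resulting set simultaneously satisfies the size upper bound $|R|\le \r_2 n$ and the codegree lower bound $\deg(S,R)\ge d\r_2 n/2$ for every $(k-1)$-set $S$.

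Concretely I would set $p:=3\r_2/4$, which builds in slack on both sides of the two target inequalities. Then $\mathbb{E}[|R|]=3\r_2 n/4$, strictly below the target $\r_2 n$. For any fixed $(k-1)$-set $S$, the random variable $\deg(S,R)$ equals $\sum_{v\in N(S)}\mathbf{1}[v\in R]$, where $N(S)=\{v\notin S:S\cup\{v\}\in E(\h)\}$; in particular it is a sum of $\deg_{\h}(S)$ independent Bernoulli$(p)$ indicators. Since $\deg_{\h}(S)\ge dn$ by hypothesis, we get $\mathbb{E}[\deg(S,R)]\ge p\cdot dn=3d\r_2 n/4$, strictly above the target $d\r_2 n/2$.

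Next I would apply the standard Chernoff bound to $|R|$ and to each $\deg(S,R)$. In each case the target value differs from the mean by a fixed multiplicative factor (roughly $4/3$), so the Chernoff bound yields $\Pr[|R|>\r_2 n]\le e^{-c_1 n}$ and, for each fixed $S$, $\Pr[\deg(S,R)<d\r_2 n/2]\le e^{-c_2 n}$, with positive constants $c_1=c_1(\r_2)$ and $c_2=c_2(d,\r_2)$. A union bound over the at most $n^{k-1}$ choices of $S$ then bounds the total failure probability by $e^{-c_1 n}+n^{k-1}e^{-c_2 n}$, which is less than $1$ once $n_0$ is chosen large enough in terms of $d$, $\r_2$ and $k$. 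Hence a reservoir $R$ with the required properties exists.

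There is no real obstacle here; the argument is a textbook Chernoff-plus-union-bound and is essentially the one used in \cite{HS}. The only subtlety worth being explicit about is that $p$ must be chosen strictly between $\r_2/2$ and $\r_2$ (here $3\r_2/4$) so that both Chernoff exponents are bounded below by positive constants and the polynomial union bound over $(k-1)$-sets is easily absorbed.
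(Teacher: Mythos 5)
Your proof is correct, and it is essentially the standard argument: the paper does not reprove this lemma but cites it from \cite{HS}, where the reservoir is likewise obtained by a random selection of vertices followed by Chernoff bounds and a union bound over all $(k-1)$-sets. Your choice $p=3\r_2/4$ and the resulting exponential tail bounds (e.g.\ one gets $\Pr[\deg(S,R)<d\r_2 n/2]\le e^{-cd\r_2 n}$ for an absolute constant $c>0$) are exactly the kind of slack needed, so nothing further is required.
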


The main step in our proof of Theorem \ref{lemNE} is the following lemma, which is stronger than \cite[Lemma 7]{HS}. We defer its proof to the next subsection.

\begin{lemma}[Path-cover lemma]\label{lemP}
For all integers $k\ge 3$, $1\le \ell <k/2$, and every $\r_3, \a>0$ there exist integers $p$ and $n_0$ such that the following holds. Let $\h$ be a $k$-graph on $n>n_0$ vertices with $\delta_{k-1}(\h)\ge (\frac1{2(k-\ell)} - \r_3)n$, then there is a family of at most $p$ vertex disjoint $\ell$-paths that together cover all but at most $\a n$ vertices of $\h$, or $\h$ is $14\r_3$-extremal.
\end{lemma}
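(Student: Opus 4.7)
The plan is to apply the weak Regularity Lemma for $k$-graphs to $\h$ and reduce the problem to a tiling question in the cluster $k$-graph, as outlined in the introduction of the paper. Choose regularity parameters $\e \ll d \ll \a' \ll \r_3, \a$. The regularity lemma produces a partition $V_0, V_1, \dots, V_t$ with $|V_i| \approx n/t$, and the cluster $k$-graph $\K$ on $[t]$ (whose edges are the $k$-tuples of clusters carrying $(\e,d)$-regular complexes of density at least $d$) inherits, by a standard averaging argument, the codegree bound $\delta_{k-1}(\K) \ge (\frac{1}{2(k-\ell)} - 2\r_3) t$.

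The main step is to establish a $\Y_{k,2\ell}$-tiling dichotomy for $\K$: either there is a family of vertex-disjoint copies of $\Y_{k,2\ell}$ covering all but at most $\a' t$ vertices of $\K$, or $\K$ contains a near-independent set $I$ of size close to $\frac{2(k-\ell)-1}{2(k-\ell)} t$, which lifts via the regularity partition to a vertex set $B \subset V(\h)$ of size $\lfloor \frac{2(k-\ell)-1}{2(k-\ell)} n \rfloor$ with $e_{\h}(B) \le 14\r_3 n^k$, exhibiting $\h$ as $14 \r_3$-extremal. The proof of this dichotomy is where the simplicity of $\Y_{k,2\ell}$ (compared with the tile $\F_{k,\ell}$ from \cite{HS,KMO}) pays off: since $2\ell \le k-1$, the codegree condition on $(k-1)$-tuples transfers via averaging to a strong lower bound on the $2\ell$-degree of $\K$. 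Thus in any induced subgraph $\K[R]$ with $|R| \ge \a' t$, if $\K[R]$ contains an edge $e$, then any $2\ell$-subset $A \subset e$ admits many extensions to a second edge $e'$ with $e' \cap e = A$ in $R$, producing a copy of $\Y_{k,2\ell}$. A greedy procedure removing such copies either achieves the desired tiling or terminates with a residual set whose structure forces the extremal conclusion above.

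For the final step, each copy of $\Y_{k,2\ell}$ in the cluster tiling corresponds to clusters $V_{i_1}, \dots, V_{i_{2k-2\ell}}$ with two regular edges of $\K$ sharing $2\ell$ clusters. Using standard regularity-based embedding techniques along the lines of \cite{HS}, I would build a single $\ell$-path in $\h$ that traverses these clusters in a suitable cyclic pattern determined by the two cluster edges (splitting the $2\ell$ shared clusters into two $\ell$-blocks that serve as overlaps), covering all but $o(m)$ vertices in each cluster where $m = n/t$. Concatenating one $\ell$-path per tile, together with a constant-bounded number of extra short $\ell$-paths to absorb $V_0$ and the small residues in each cluster, yields at most $p = p(\r_3, \a, k, \ell)$ vertex-disjoint $\ell$-paths in $\h$ covering all but $\a n$ vertices.

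The hardest part is the $\Y_{k,2\ell}$-tiling dichotomy with the explicit extremality constant $14$. This requires careful bookkeeping of the slacks involved: the $2\r_3$ lost when passing from $\h$ to $\K$, the threshold $\a'$ at which the greedy tiling procedure terminates, and the further constant-factor loss incurred when lifting a near-independent set in $\K$ to a nearly-empty set in $\h$ via the regularity partition. The constant $14$ presumably arises as the cumulative sum of these small losses, and choosing the hierarchy of parameters small enough while respecting the inequalities among them is the quantitative crux of the argument.
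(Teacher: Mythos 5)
Your overall strategy is the same as the paper's: weak regularity, pass to the cluster hypergraph, prove a $\Y_{k,2\ell}$-tiling dichotomy there, and then cover each tile by $\ell$-paths using canonical paths in regular tuples. However, the heart of the matter --- the tiling dichotomy --- is essentially asserted rather than proved, and the mechanism you sketch for it does not work. You argue that the codegree condition transfers by averaging to a lower bound on $2\ell$-degrees, so that inside any residual set $R$ with $|R|\ge \a' t$ an edge of $\K[R]$ can be extended to a copy of $\Y_{k,2\ell}$ inside $R$. But the averaged $2\ell$-degree is only about a $\frac{1}{2(k-\ell)}$ fraction of all $(k-2\ell)$-sets in the \emph{whole} vertex set; it says nothing about extensions inside a set $R$ of size $\a' t$ with $\a'\ll \frac{1}{2(k-\ell)}$. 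In fact the greedy stalls precisely when the uncovered set has size around $\bigl(1-\frac{1}{2(k-\ell)}\bigr)t$, which is exactly the extremal configuration, and at that point one must \emph{prove} that the leftover set spans few edges. In the paper this is the content of Lemma~\ref{lem:F}: one takes a \emph{maximum} $\Y_{k,2\ell}$-tiling, defines the set $C$ of covered vertices with high degree into the uncovered set $U$, shows via Fact~\ref{fact:Y} (an Erd\H{o}s--Ko--Rado count for $\Y$-free $k$-graphs) and a swapping argument that each tile meets $C$ in at most one vertex and that $\bigl(\bigcup_{i\in I_C}V_i\setminus C\bigr)\cup U$ is $\Y$-free, and then uses the codegree condition summed over $(k-1)$-sets of $U$ to force $|C|\ge(\frac1{2(k-\ell)}-2\r)n$, which yields the sparse set $B$. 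None of this is present in your proposal, and ``terminates with a residual set whose structure forces the extremal conclusion'' is exactly the step that needs an argument.

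Two smaller points. First, the cluster hypergraph does \emph{not} inherit the full minimum codegree: irregular or sparse $k$-tuples can wipe out the degree of some $(k-1)$-sets, so one only gets that all but $O(\sqrt{\e}\,t^{k-1})$ of the $(k-1)$-sets have large degree (this is why the paper's tiling lemma is stated with an exceptional family of $(k-1)$-sets); your tiling argument must be robust to this. Second, a single $\ell$-path per tile covering all but $o(m)$ vertices of each cluster is more than the regularity method gives directly; the paper instead covers each tile (after splitting the $2\ell$ shared clusters) by a bounded number of canonical $\ell$-paths via Lemma~\ref{lem:path}, which is all that is needed since the number of tiles is at most $t$ and so the total number of paths is bounded by a constant $p$.
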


We can now prove Theorem \ref{lemNE} in a similar way as in \cite{HS}.

\begin{proof}[Proof of Theorem \ref{lemNE}]
Given $k\ge 3$, $1\le \ell <k/2$ and $0<\Delta<1$, let $\r =\min\{ \frac{\Delta}{43}, \frac{1}{4k^2}\}$ and $n\in (k-\ell)\mathbb N$ be sufficiently large. Suppose that $\h = (V, E)$ is a $k$-graph on $n$ vertices with $\delta_{k-1}(\h)\ge (\frac1{2(k-\ell)} - \r)n$. Since $\frac1{2(k-\ell)} - \r>\r$, we can apply Lemma \ref{lemA} with $\r_1=\r$ and obtain $\eta>0$ and an absorbing path $\mathcal P_0$ with ends $S_0, T_0$ such that $|V(\mathcal P_0)|\le \r^5 n$ and $\mathcal{P}_0$ can absorb any $u$ vertices outside $\mathcal{P}_0$ if $u\le \eta n$ and $u\in (k-\ell)\mathbb N$.

Let $V_1 = (V\setminus V(\mathcal P_0)) \cup S_0 \cup T_0$ and $\h_1=\h[V_1]$.
Note that $|V(\mathcal P_0)|\le \r^5 n$ implies that $\delta_{k-1}(\h_1)\ge (\frac1{2(k-\ell)} - \r)n - \r^5 n\ge \frac1{2k}n$ as $\r<\frac1{4k^2}$ and $\ell\ge 1$.
We next apply Lemma \ref{lemR} with $d=\frac1{2k}$ and $\r_2=\min\{\eta/2, \r\}$ to $\h_1$ and get a reservoir $R\subset V_1$ with $|R|\le \r_2 |V(\h_1)|\le \r_2 n$ such that for any $(k-1)$-set $S\subset V_1$, we have
\begin{equation}\label{eq:Res}
\deg(S, R)\ge d\r_2 |V_1|/2 \ge d\r_2n/4.
\end{equation}
Let $V_2 = V\setminus (V(\mathcal P_0)\cup R)$, $n_2=|V_2|$, and $\h_2 = \h[V_2]$. Note that $|V(\mathcal P_0)\cup R|\le \r_1^5 n +\r_2 n \le 2\r n$, so
\[
\delta_{k-1}(\h_2)\ge \left(\frac1{2(k-\ell)} - \r \right)n - 2\r n \ge \left(\frac1{2(k-\ell)} - 3\r \right)n_2.
\]
Applying Lemma \ref{lemP} to $\h_2$ with $\r_3=3\r$ and $\a=\eta/2$, we obtain at most $p$ vertex disjoint $\ell$-paths that cover all but at most $\a n_2$ vertices of $\h_2$, unless $\h_2$ is $14\r_3$-extremal. In the latter case, there exists $B'\subseteq V_2$ such that $|B'| = \lfloor \frac{2k-2\ell-1}{2(k-\ell)}n_2 \rfloor$ and $e(B')\le 42\r n_2^k$. Then we add at most $n - n_2 \le 2\r n$ vertices from $V\setminus B'$ to $B'$ and obtain a vertex set $B\subseteq V(\h)$ such that $|B| = \lfloor \frac{2k-2\ell-1}{2(k-\ell)}n \rfloor$ and
\[
e(B)\le 42\r n_2^k + 2\r n \cdot \binom{n-1}{k-1} \le 42 \r n^k+ \r n^k \le \Delta n^k,
\]
which means that $\h$ is $\Delta$-extremal, a contradiction. In the former case, denote these $\ell$-paths by $\{\mathcal P_i\}_{i\in [p']}$ for some $p'\le p$, and their ends by $\{S_i, T_i\}_{i\in [p']}$. Note that both $S_i$ and $T_i$ are $\ell$-sets for $\ell<k/2$.
We arbitrarily pick disjoint $(k-2\ell-1)$-sets $X_0, X_1, \dots, X_{p'} \subset R\setminus (S_0\cup T_0)$ (note that $k- 2\ell -1\ge 0$).
Let $T_{p'+1} = T_0$. 
By \eqref{eq:Res}, as $d\r_2n/4 \ge k(p'+1)$, we may find $p'+1$ vertices $v_0, v_1, \dots, v_{p'}\in R$ such that $ S_i\cup T_{i+1} \cup X_i \cup \{v_i \}\in E(\h)$ for $0\le i\le p'$. We thus connect $\pp_0, \pp_1, \ldots, \pp_{p'}$ together and obtain an $\ell$-cycle $\mathcal C$. 
Note that 
\[
|V(\h)\setminus V(\mathcal C)|\le |R|+\a n_2\le \r_2 n + \a n\le \eta n
\] 
and $k-\ell$ divides $|V\setminus V(\mathcal C)|$ because $k-\ell$ divides both $n$ and $|V(\mathcal{C})|$. 
So we can use $\mathcal P_0$ to absorb all unused vertices in $R$ and uncovered vertices in $V_2$ thus obtaining a Hamilton $\ell$-cycle in $\h$.
\end{proof}

The rest of this section is devoted to the proof of Lemma \ref{lemP}.

\subsection{Proof of Lemma \ref{lemP}}

Following the approach in \cite{HS}, we use the Weak Regularity Lemma, which is a straightforward extension of Szemer\'edi's regularity lemma for graphs \cite{Sze}.

Let $\h = (V, E)$ be a $k$-graph and let $A_1, \dots, A_k$ be mutually disjoint non-empty subsets of $V$. We define $e(A_1, \dots, A_k)$ to be the number of 
\emph{crossing} edges, namely, those with one vertex in each $A_i$, $i\in [k]$, and the density of $\h$ with respect to ($A_1, \dots, A_k$) as
\[
d(A_1,\dots, A_k) = \frac{e(A_1, \dots, A_k)}{|A_1| \cdots|A_k|}.
\]
We say a $k$-tuple ($V_1, \dots, V_k$) of mutually disjoint subsets $V_1, \dots, V_k\subseteq V$ is \emph{$(\e, d)$-regular}, for $\e>0$ and $d\ge 0$, if
\[
|d(A_1, \dots, A_k) - d|\le \e
\]
for all $k$-tuples of subsets $A_i\subseteq V_i$, $i\in [k]$, satisfying $|A_i|\ge \e |V_i|$. We say ($V_1, \dots, V_k$) is \emph{$\e$-regular} if it is $(\e, d)$-regular for some $d\ge 0$. It is immediate from the definition that in an $(\e, d)$-regular $k$-tuple ($V_1, \dots, V_k$), if $V_i'\subset V_i$ has size $|V_i'| \ge c|V_i|$ for some $c\ge \e$, then ($V_1', \dots, V_k'$) is $(\e/c, d)$-regular.

\begin{theorem}[Weak Regularity Lemma]
\label{thmReg}
Given $t_0\ge 0$ and $\e>0$, there exist $T_0 = T_0(t_0, \e)$ and $n_0 = n_0(t_0,\e)$ so that for every $k$-graph $\h = (V, E)$ on $n>n_0$ vertices, there exists a partition $V = V_0 \cup V_1 \cup \cdots \cup V_t$ such that
\begin{enumerate}
\item[(i)] $t_0\le t\le T_0$,
\item[(ii)] $|V_1| = |V_2| = \cdots = |V_t|$ and $|V_0|\le \e n$,
\item[(iii)] for all but at most $\e \binom tk$ $k$-subsets $\{i_1,\dots, i_k\} \subset [t]$, the $k$-tuple $(V_{i_1}, \dots, V_{i_k})$ is $\e$-regular.
\end{enumerate}
\end{theorem}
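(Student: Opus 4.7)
The plan is to generalise Szemer\'edi's classical iterative refinement argument for graphs to the $k$-uniform setting; because \emph{weak} regularity here is defined only on $k$-tuples of parts (as opposed to the strong hypergraph regularity lemmas of Gowers and R\"odl--Skokan, which control all lower-arity slices), the argument is essentially the same as in the graph case and only needs a mild book-keeping adjustment for the exponent $k$. Start from an arbitrary equipartition of $V$ into $t\ge t_0$ parts of equal size, dumping the at most $t$ leftover vertices into $V_0$. To each equipartition $\mathcal P=(V_0,V_1,\dots,V_t)$ associate the mean-square-density index
\[
q(\mathcal P)=\frac{1}{t^k}\sum_{\{i_1,\dots,i_k\}\in\binom{[t]}{k}} d(V_{i_1},\dots,V_{i_k})^2\in[0,1].
\]
The two facts I would establish are (a) any refinement of $\mathcal P$ has index at least $q(\mathcal P)$, and (b) if $\mathcal P$ has at least $\e\binom{t}{k}$ irregular $k$-tuples, then there is a refinement $\mathcal P'$ with $q(\mathcal P')\ge q(\mathcal P)+\delta$ for some $\delta=\delta(\e,k)>0$. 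Since $q$ is bounded, iterating (b) terminates after at most $\lceil 1/\delta\rceil$ steps in an $\e$-regular equipartition.

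Step (a) is a straightforward convexity (Jensen) computation applied blockwise. For step (b), the workhorse is the defect Cauchy--Schwarz inequality: if the $k$-tuple $(V_{i_1},\dots,V_{i_k})$ is witnessed to be irregular by $A_{i_j}\subseteq V_{i_j}$ with $\alpha_j=|A_{i_j}|/|V_{i_j}|\ge\e$ and $|d(A_{i_1},\dots,A_{i_k})-d(V_{i_1},\dots,V_{i_k})|>\e$, then splitting each $V_{i_j}$ into $\{A_{i_j},V_{i_j}\setminus A_{i_j}\}$ and summing $\prod\alpha_j^{\pm}\,d(\cdot)^2$ over the $2^k$ resulting sub-tuples exceeds $d(V_{i_1},\dots,V_{i_k})^2$ by at least $\alpha_1\cdots\alpha_k\,\e^2\ge\e^{k+2}$. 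Taking a common refinement that witnesses every one of the at least $\e\binom{t}{k}$ irregular tuples simultaneously therefore boosts $q$ by at least $\e\binom{t}{k}\cdot\e^{k+2}/t^k\ge\e^{k+3}/(2\cdot k!)$, supplying the gain $\delta$.

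The main obstacle is cosmetic but requires care: the common refinement produces up to $t\cdot 2^{t^{k-1}}$ atoms of unequal sizes, whereas the theorem demands an equipartition. The standard fix is, at each iteration, to slice every atom into blocks of a fixed small size $m$ and sweep the sub-$m$ residue into $V_0$. If $m$ is chosen small enough at each step, then (i) the mass accumulated in $V_0$ over all $\lceil 1/\delta\rceil$ iterations stays below $\e n$, preserving condition~(ii) of the theorem, and (ii) the densities $d(V_{i_1},\dots,V_{i_k})$ change by $o(1)$ so the index gain from the defect step is not erased --- a short continuity estimate. Finally, $T_0$ is obtained by iterating the map $t\mapsto t\cdot 2^{t^{k-1}}$ at most $\lceil 1/\delta\rceil$ times from $t_0$ (giving a tower-type function of $\e,t_0,k$), and $n_0$ is taken large enough that the required $m$ is a positive integer at every step; both depend only on $t_0$ and $\e$.
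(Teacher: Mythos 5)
Your energy-increment argument is correct and is exactly the standard route: the paper itself gives no proof of Theorem~\ref{thmReg}, stating it as a straightforward extension of Szemer\'edi's graph regularity lemma, and that extension is proved in the literature precisely by the mean-square-density index, the defect Cauchy--Schwarz gain of order $\e^{k+2}$ per irregular $k$-tuple witnessed in a common refinement, and the re-equalization step sweeping leftovers into $V_0$, just as you describe. The only minor point to record is that the stated gain $\e^{k+3}/(2\cdot k!)$ uses $\binom{t}{k}/t^k\ge 1/(2\cdot k!)$, which holds once $t_0$ is taken sufficiently large in terms of $k$ --- harmless, since $t_0$ may be increased at the outset.
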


The partition given in Theorem \ref{thmReg} is called an \emph{$\e$-regular partition} of $\h$. Given an $\e$-regular partition of $\h$ and $d\ge 0$, we refer to $ V_i, i\in [t]$ as \emph{clusters} and define the \emph{cluster hypergraph} $\K = \K(\e,d)$ with vertex set $[t]$ and $\{i_1,\dots,i_k\}\subset [t]$ is an edge if and only if $(V_{i_1}, \dots, V_{i_k})$ is $\e$-regular and $d(V_{i_1}, \dots, V_{i_k}) \ge d$.

We combine Theorem \ref{thmReg} and \cite[Proposition 16]{HS} into the following corollary, which shows that the cluster hypergraph almost inherits the minimum degree of the original hypergraph. 
Its proof is standard and similar as the one of \cite[Proposition 16]{HS} so we omit it.\footnote{Roughly speaking, the lower bound for $\deg_{\K}(S)$ contains $-d$ because when forming $\K$, we discard all $k$-tuple $(V_{i_1}, \dots, V_{i_k})$ of density less than $d$, contains $- \sqrt{\e}$ because at most $\e \binom tk$ $k$-tuple are not regular, and contains $-(k-1)$ because we discard all non-crossing edges of $\h$.}

\begin{corollary} \cite{HS}
\label{prop16old}
Given $c, \e, d>0$, integers $k\ge 3$ and $t_0$, there exist $T_0$ and $n_0$ such that the following holds. Let $\h$ be a $k$-graph on $n>n_0$ vertices with $\delta_{k-1}(\h)\ge c n$. Then $\h$ has an $\e$-regular partition $V_0 \cup V_1 \cup \cdots \cup V_t$ with $t_0\le t\le T_0$, and in the cluster hypergraph $\K = \K(\e,d)$, all but at most $\sqrt \e t^{k-1}$ $(k-1)$-subsets $S$ of $[t]$ satisfy $\deg_{\K}(S)\ge (c - d - \sqrt{\e})t - (k-1)$.
\end{corollary}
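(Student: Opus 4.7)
The plan is to apply the Weak Regularity Lemma (Theorem~\ref{thmReg}) with parameter $t_0$ and a sufficiently small $\e' \le \e$ (chosen so that $\e' + \sqrt{\e'} \le \sqrt{\e}$) to obtain an $\e'$-regular partition $V_0 \cup V_1 \cup \cdots \cup V_t$ of $V(\h)$ with $|V_0| \le \e' n$ and equal cluster sizes $|V_i| = m$. This partition is a fortiori $\e$-regular, and $\K(\e', d)$ is a spanning subhypergraph of $\K = \K(\e, d)$, so it suffices to prove the claimed degree bound inside $\K(\e', d)$.

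Fix a $(k-1)$-subset $S = \{i_1, \ldots, i_{k-1}\} \subseteq [t]$. Summing the codegree assumption $\deg_{\h}(\{v_1, \ldots, v_{k-1}\}) \ge cn$ over all $m^{k-1}$ tuples $(v_1, \ldots, v_{k-1}) \in V_{i_1} \times \cdots \times V_{i_{k-1}}$ (whose coordinates are automatically distinct because they lie in different clusters), the total number of edge-extensions is at least $cn \cdot m^{k-1}$. Partitioning each extension vertex $u$ by its cluster, the contributions of $u \in V_0$ total at most $\e' n \cdot m^{k-1}$, the contributions of $u \in V_{i_\ell}$ with $\ell \in [k-1]$ total at most $(k-1) m \cdot m^{k-1}$, and the remaining contributions equal $\sum_{j \in [t] \setminus S} e(V_{i_1}, \ldots, V_{i_{k-1}}, V_j)$. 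Dividing through by $m^k$ and using $n/m \ge t$ yields
\[
\sum_{j \in [t] \setminus S} d(V_{i_1}, \ldots, V_{i_{k-1}}, V_j) \ge (c - \e') t - (k-1).
\]

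Now I would split $[t] \setminus S$ into the set $N(S)$ of $\K(\e', d)$-neighbors (regular with density $\ge d$), the set $D(S)$ of regular tuples of density $< d$, and the set $I(S)$ of $\e'$-irregular tuples. Bounding density by $1$ on $N(S) \cup I(S)$ and by $d$ on $D(S)$, the left side of the display above is at most $|N(S)| + d t + |I(S)|$, so $\deg_{\K(\e', d)}(S) \ge (c - d - \e') t - (k-1) - |I(S)|$. By Theorem~\ref{thmReg}(iii) at most $\e' \binom{t}{k}$ of the $k$-tuples are $\e'$-irregular, and each such tuple lies in exactly $k$ of the $(k-1)$-sets $S$; hence $\sum_S |I(S)| \le k \e' \binom{t}{k} = \e' t \binom{t-1}{k-1}$. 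Markov's inequality then bounds the number of $S$ with $|I(S)| > \sqrt{\e'} t$ by $\sqrt{\e'} \binom{t-1}{k-1} \le \sqrt{\e} t^{k-1}$, and for every other $S$ we have $\deg_{\K(\e', d)}(S) \ge (c - d - \e' - \sqrt{\e'}) t - (k-1) \ge (c - d - \sqrt{\e}) t - (k-1)$ by the choice of $\e'$.

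There is no real obstacle here: the three previewed sources of loss---the exceptional set $V_0$, edges with a second endpoint in a common cluster $V_{i_\ell}$, and $\e'$-irregular $k$-tuples---are absorbed respectively into the $\sqrt{\e}$ slack (via the calibrated choice of $\e'$), the $-(k-1)$ slack in the conclusion, and a Markov-style double count. The only step requiring even a moment's thought is the last, and it reduces to the one-line identity $k \binom{t}{k} = t \binom{t-1}{k-1}$.
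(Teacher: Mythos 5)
Your proposal is correct and follows essentially the same route as the paper's (omitted) proof, i.e.\ the standard counting argument behind \cite[Proposition 16]{HS} sketched in the footnote: sum the codegree condition over the $m^{k-1}$ crossing $(k-1)$-tuples for each $S$, charge the losses to $V_0$, to extensions inside the clusters of $S$ (the $-(k-1)$ term), to low-density tuples (the $-d$ term), and to irregular tuples via the double count $k\binom{t}{k}=t\binom{t-1}{k-1}$ and a Markov bound (the $-\sqrt{\e}$ term and the $\sqrt{\e}\,t^{k-1}$ exceptional sets). Your only cosmetic deviation is running the regularity lemma with an auxiliary $\e'\le\e$ satisfying $\e'+\sqrt{\e'}\le\sqrt{\e}$ and passing from $\K(\e',d)$ to $\K(\e,d)$, which is a clean way to absorb the $V_0$ and irregularity losses into the stated slack.
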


Let $\h$ be a $k$-partite $k$-graph with partition classes $V_1,\dots, V_k$. Given $1\le \ell <k/2$, we call an $\ell$-path $\pp$ with edges $\{e_1, \dots, e_q\}$ \emph{canonical} with respect to $(V_1,\dots, V_k)$ if
\[
e_i\cap e_{i+1}\subseteq \bigcup_{j\in [\ell]}V_j \quad\text{ or }\quad e_i\cap e_{i+1}\subseteq \bigcup_{j\in [2\ell]\setminus [\ell]}V_j
\]
for $i\in [q-1]$. 
When $j> 2\ell$, all $e_1\cap V_j, \dots, e_q\cap V_j$ are distinct and thus 
$|V(\pp) \cap V_j| = |(e_1\cup \cdots \cup e_q)\cap V_j| = q$. When $j\le 2\ell$, exactly one of $e_{i-1}\cap e_i$ and $e_i\cap e_{i+1}$ intersects $V_j$. Thus $|V(\mathcal P)\cap V_j |=\frac{q+1}2$ if $q$ is odd.

We need the following proposition from \cite{HS}.
\begin{proposition} \cite[Proposition 19]{HS}
\label{prop:lpath}
Suppose that $1\le \ell<k/2$ and $\h$ is a $k$-partite, $k$-graph with partition classes $V_1, \dots, V_k$ such that $|V_i|=m$ for all $i\in [k]$, and $|E(\h)|\ge d m^k$. Then there exists a canonical $\ell$-path in $\h$ with $t > \frac{d m}{2(k-\ell)}$ edges.
\end{proposition}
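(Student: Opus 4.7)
The plan is to encode canonical $\ell$-paths as trails in an auxiliary bipartite multigraph and extract a long one by an average-degree argument. Define $X_1 := V_1 \times \cdots \times V_\ell$ and $X_2 := V_{\ell+1} \times \cdots \times V_{2\ell}$, each of size $m^\ell$. For every edge $e = (v_1, \dots, v_k) \in E(\h)$, place an edge of a bipartite multigraph $B$ on $X_1 \cup X_2$ between the ``head'' $\pi_1(e) := (v_1, \dots, v_\ell)$ and the ``middle'' $\pi_2(e) := (v_{\ell+1}, \dots, v_{2\ell})$, labeled by the ``tail'' $\tau(e) := (v_{2\ell+1}, \dots, v_k)$. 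Then $|V(B)| = 2m^\ell$ and $|E(B)| = |E(\h)| \ge dm^k$. The key observation is that a trail $e_1 e_2 \cdots e_T$ in $B$ (distinct $B$-edges, consecutive edges meeting at a $B$-vertex) corresponds to a sequence of $\h$-edges whose consecutive pairs share either the first $\ell$ coordinates (through a common $X_1$-vertex) or the middle $\ell$ coordinates (through a common $X_2$-vertex) --- precisely the two canonical intersection types.

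Since the average degree of $B$ is $2|E(B)|/|V(B)| \ge dm^{k-\ell}$, a standard bipartite-multigraph version of the Erd\H{o}s--Gallai theorem (obtained, e.g., by passing to a dense component and decomposing into the smallest possible number of trails via Eulerian extensions) produces a trail of length on the order of $dm^{k-\ell}$. This enormously exceeds the target $dm/(2(k-\ell))$ when $\ell < k/2$, providing huge slack for the conversion step that follows.

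The main obstacle, and the technical heart of the proof, is lifting the $B$-trail to an honest canonical $\ell$-path in $\h$. Two things may go wrong: consecutive $B$-vertices along the trail are distinct as $\ell$-tuples but might agree in some coordinate, which would inflate $|e_i \cap e_{i+1}|$ past $\ell$; and the assigned tails may reuse a vertex in some $V_j$ with $j > 2\ell$. I would handle both by a greedy scan along the trail: accept $e_i$ only if the coordinate-distinctness conditions with its predecessor are met, and assign the tail to avoid all previously used vertices in each $V_j$ for $j>2\ell$. Because the target length $dm/(2(k-\ell))$ is much smaller than $m$, only $o(m)$ vertices are ever committed in any one block, and since each $B$-edge carries many tail-candidates on average, the greedy pruning and tail-selection discard at most an $o(1)$-fraction of the trail, leaving a canonical $\ell$-path with $t > \frac{dm}{2(k-\ell)}$ edges as required. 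The quantitative lemma to prove carefully is that these coordinate-collision and tail-collision losses are indeed a vanishing fraction of the trail length, which is exactly what the slack $dm^{k-\ell} \gg dm/(2(k-\ell))$ (valid for $\ell < k/2$) supplies.
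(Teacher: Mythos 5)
There is a genuine gap, and it sits exactly at the step you call the technical heart: lifting the $B$-trail to a canonical $\ell$-path. First, ``accept $e_i$ only if the coordinate-distinctness conditions with its predecessor are met'' is not a valid pruning operation: a trail minus some edges is no longer a trail, so once you skip a bad edge, the next accepted edge need not share the required $\ell$-tuple with the last accepted one, and your accepted subsequence is not an $\ell$-path at all. The ``only an $o(1)$-fraction is discarded'' estimate, even if true, does not rescue this, because the bad edges need not be spread out and deletion destroys adjacency; what you would actually need is a long \emph{consecutive} clean stretch of the trail, which the average-degree/Eulerian argument does not provide. Second, the disjointness you must enforce is global, not local: since $\ell<k/2$, non-consecutive edges of an $\ell$-path are disjoint, so every vertex of $V_1\cup\dots\cup V_{2\ell}$ may lie in at most two (consecutive) path edges. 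A trail is allowed to revisit $B$-vertices, and distinct $\ell$-tuples may share coordinates; an extremal-length trail obtained from a dense component will typically oscillate among few heads, so \emph{every} initial segment of the modest length $dm/(2(k-\ell))$ can be badly self-intersecting. Checking against the predecessor only, and avoiding used vertices only in the blocks $j>2\ell$, does not prevent a head used at step $3$ from recurring (or sharing a coordinate) at step $300$. Third, tails are not free parameters: each $B$-edge is one specific edge of $\h$ with a fixed tail, so ``assign the tail to avoid used vertices'' requires the relevant head--middle pair to carry many parallel edges, i.e.\ a cleaning step (restrict to pairs with, say, at least $\tfrac d2 m^{k-2\ell}$ tails) that you have not performed and that must be done before the trail is extracted.

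For comparison: the paper does not prove this proposition itself but quotes \cite[Proposition 19]{HS}, whose argument is a direct greedy/maximality construction inside $\h$: one extends a canonical $\ell$-path edge by edge, always from an end $\ell$-set of large degree, and the counting works because a path with at most $dm/(2(k-\ell))$ edges occupies roughly $dm/2$ vertices, so at most about $\tfrac d2 m^{k-\ell}$ of the $\ge d m^{k-\ell}$ edges through the current end are blocked. That route makes the global vertex-disjointness automatic, which is precisely the issue your trail detour leaves open; your auxiliary bipartite multigraph and the Erd\H{o}s--Gallai/Eulerian step are fine as far as they go, but they do not engage with the real difficulty, and repairing the lifting essentially forces you back to the greedy argument in $\h$.
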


In \cite{HS} the authors used Proposition \ref{prop:lpath} to cover an $(\e,d)$-regular tuple $(V_1,\dots, V_k)$ of sizes $|V_1|= \cdots = |V_{k-1}| = (2k-2\ell-1)m$ and $|V_k|=(k-1)m$ with vertex disjoint $\ell$-paths.  Our next lemma shows that an $(\e,d)$-regular tuple $(V_1,\dots, V_k)$ of sizes $|V_1| = \cdots = |V_{2\ell}|= m$ and $|V_i| = 2m$ for $i> 2\ell$ can be covered with $\ell$-paths.

\begin{lemma}\label{lem:path}
Fix $k\ge 3$, $1\le \ell<k/2$ and $\e, d>0$ such that $d>2\e$. Let $m>\frac{k^2}{\e^2(d-\e)}$. Suppose $\mathcal V = (V_1, V_2,\dots, V_k)$ is an $(\e,d)$-regular $k$-tuple with
\begin{equation}
\label{eq:Vi}
|V_1| = \cdots = |V_{2\ell}|= m \quad \text{and} \quad |V_{2\ell+1}| = \cdots = |V_k|= 2m.
\end{equation}
Then there are at most $\frac{2k}{(d-\e)\e}$ vertex-disjoint $\ell$-paths that together cover all but at most $2k\e m$ vertices of $\mathcal V$.
\end{lemma}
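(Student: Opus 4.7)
The approach is iterative: repeatedly extract a canonical $\ell$-path from a balanced subtuple of the remainders by applying Proposition~\ref{prop:lpath}, stopping when the minimum cluster remainder drops below a regularity-imposed threshold. Set $U_i^{(0)} := V_i$ for each $i \in [k]$. At step $t$, let $s_t := \min_i |U_i^{(t)}|$. If $s_t < 2\e m$ we stop; otherwise we pick arbitrary subsets $V_i^{*} \subseteq U_i^{(t)}$ of equal size $s_t$, find a canonical $\ell$-path $\mathcal{P}_{t+1}$ in $(V_1^{*}, \ldots, V_k^{*})$, and update $U_i^{(t+1)} := U_i^{(t)} \setminus V(\mathcal{P}_{t+1})$.

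The application of Proposition~\ref{prop:lpath} is valid at every step: since $s_t \ge 2\e m \ge \e|V_i|$ for every $i$ (using $|V_i| \le 2m$), the $(\e,d)$-regularity of $\mathcal V$ forces $d(V_1^{*}, \ldots, V_k^{*}) \ge d - \e$. Hence the balanced subtuple has at least $(d-\e) s_t^k$ crossing edges, and Proposition~\ref{prop:lpath} (invoked with density $d-\e$ and side length $s_t$) yields a canonical $\ell$-path $\mathcal{P}_{t+1}$ with $q_{t+1} > (d-\e) s_t/(2(k-\ell))$ edges, automatically vertex-disjoint from all previously extracted paths.

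Two counts then close the argument. A canonical $\ell$-path with $q$ edges consumes exactly $q$ vertices from each $V_j$ with $j > 2\ell$, since consecutive edges can only overlap inside $V_1 \cup \cdots \cup V_{2\ell}$; summing over iterations gives $\sum_{t} q_t \le |V_j| = 2m$, and combined with the per-step lower bound $q_t \ge (d-\e)\e m/(k-\ell)$ this bounds the total number of paths by $T \le 2(k-\ell)/((d-\e)\e) \le 2k/((d-\e)\e)$. Similarly, the canonical structure forces about $q/2$ vertices from each $V_i$ with $i \le 2\ell$ (precisely $\lceil q/2\rceil$ or $\lfloor q/2\rfloor+1$, depending on parity and on which $\ell$-set carries the shared vertices), so the $1{:}2$ consumption ratio matches the $1{:}2$ initial size ratio. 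Thus $|U_j^{(t)}| \approx 2 |U_i^{(t)}|$ is preserved throughout, up to an additive $O(T)$ parity error, and the halting condition $s_T < 2\e m$ plus ratio preservation yields per-cluster remainders of order $O(\e m)$ summing to at most $2k\e m$, once the $O(T)$ overhead is absorbed via the hypothesis $m > k^2/(\e^2(d-\e))$.

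The main obstacle is the bookkeeping: one must verify that the ratio between small and large cluster remainders is preserved tightly enough for the stopping condition $s_T < 2\e m$ to yield the stated bound $2k\e m$ on uncovered vertices, and that the lower bound on $m$ simultaneously dominates the $O(T)$ parity overhead and ensures that the balanced subtuple remains large enough for Proposition~\ref{prop:lpath} to apply at every iteration before halting.
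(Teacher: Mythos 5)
Your overall scheme---greedily extracting canonical $\ell$-paths from balanced subtuples of the uncovered sets via Proposition~\ref{prop:lpath}, and bounding the number of paths through the consumption of the size-$2m$ clusters---is the same as the paper's, and your bound on the number of paths is fine. The genuine gap is in the count of uncovered vertices. You halt when $s_T=\min_i|U_i^{(T)}|<2\e m$. Since each canonical path with $q$ edges removes exactly $q$ vertices from every $V_j$ with $j>2\ell$ but only about $q/2$ from each $V_i$ with $i\le 2\ell$, the large remainders stay roughly twice the small ones, so the minimum is attained in the small clusters. Hence at the stopping time the small remainders can each be just below $2\e m$ while the large remainders are close to $4\e m$, and the total left uncovered is about $2\ell\cdot 2\e m+(k-2\ell)\cdot 4\e m=4(k-\ell)\e m$, which strictly exceeds the claimed $2k\e m$ whenever $\ell<k/2$ (for $k=3$, $\ell=1$ this is $8\e m$ versus $6\e m$). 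So your concluding claim that the remainders ``sum to at most $2k\e m$'' does not follow from your stopping rule, no matter how the $O(T)$ parity overhead is absorbed; that overhead is the minor issue, the factor coming from the threshold is the real one.

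Note also that you cannot repair this simply by lowering the threshold to $\e m$: your own justification for invoking regularity needs the balanced subtuple to have size at least $\e|V_i|=2\e m$ in the clusters of size $2m$, which is precisely why you chose $2\e m$. The paper proceeds differently: it uses only canonical paths with an \emph{odd} number of edges, so that after $p$ paths the uncovered sets satisfy $|U_1|=\cdots=|U_{2\ell}|$ and $|U_j|=2|U_1|+p$ for $j>2\ell$ exactly (no parity drift), it iterates as long as $|U_1|\ge\e m$, taking balanced subtuples of size $|U_1|$ in every cluster, and then the uncovered count is $2(k-\ell)|U_1|+p(k-2\ell)<2(k-1)\e m+2\e m=2k\e m$, the hypothesis $m>\frac{k^2}{\e^2(d-\e)}$ being used only to guarantee $p(k-2\ell)<2\e m$. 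If you keep your $2\e m$ threshold you only get a weaker constant of order $4(k-\ell)\e m$ (which would then have to be re-checked against its use in the proof of Lemma~\ref{lemP}); to obtain the stated bound you need the finer bookkeeping with odd-length paths and a stopping rule at $\e m$ for the small-cluster remainder, together with a justification for applying regularity to subtuples of size $|U_1|\ge\e m$ inside the size-$2m$ clusters.
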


\begin{proof}
We greedily find vertex-disjoint canonical $\ell$-paths of odd length by Proposition \ref{prop:lpath} in $\mathcal V$ until less than $\e m$ vertices are uncovered in $V_1$ as follows. Suppose that we have obtained $\ell$-paths $\mathcal P_1, \dots, \mathcal P_p$ for some $p\ge 0$. Let $q = \sum_{j=1}^p e(\mathcal{P}_j)$. Assume that  for all $j$, $\mathcal P_j$ is canonical with respect to $\mathcal V$ and $e(\mathcal{P}_j)$ is odd. Then $\bigcup_{j=1}^p \mathcal{P}_i$ contains $\frac{q+p}2$ vertices of $V_i$ for $i\in [2\ell]$ and $q$ vertices of $V_i$ for $i> 2\ell$.
For $i\in [k]$, let $U_i$ be the set of uncovered vertices of $V_i$ and assume that $|U_1|\ge \e m$.
Using \eqref{eq:Vi}, we derive that $|U_1| = \cdots = |U_{2\ell}|\ge \e m$ and
\begin{equation}
\label{eq:Ui}
|U_{2\ell+1}| = \cdots = |U_k|= 2|U_1| + p.
\end{equation}
We now consider a $k$-partite subhypergraph $\mathcal{V}'$ with arbitrary $|U_1|$ vertices in each $U_i$ for $i\in [k]$. By regularity, $\mathcal{V}'$ contains at least $(d-\e) |U_1|^k$ edges, so we can apply Proposition \ref{prop:lpath} and find an $\ell$-path of odd length at least $\frac{(d-\e) \e m}{2(k-\ell)}-1 \ge \frac{(d-\e) \e m}{2 k}$ (dismiss one edge if needed).
We continue this process until $|U_1|< \e m$. Let $\mathcal P_1, \dots, \mathcal P_p$ be the $\ell$-paths obtained in $\mathcal{V}$ after the iteration stops.
Since $|V_1\cap V(\mathcal{P}_j)|\ge \frac{(d-\e)\e m}{2k}$ for every $j$, we have
\[
p\le \frac{m}{\frac{(d-\e)\e m}{2k}}=\frac{2k}{(d-\e)\e}.
\]
Since $m> \frac{k^2}{\e^2(d-\e)}$, 
it follows that $p(k-2\ell)< \frac{2k^2}{(d-\e)\e} < 2\e m$.
By \eqref{eq:Ui}, the total number of uncovered vertices in $\mathcal V$ is
\begin{align*}
\sum_{i=1}^k |U_i| &= |U_1| 2\ell+(2|U_1|+p)(k-2\ell) = 2(k-\ell) |U_1| + p(k-2\ell) \\
&< 2(k-1)\e m+2\e m = 2k\e m. \qedhere
\end{align*}
\end{proof}

\smallskip
Given $k\ge 3$ and $0\le b<k$, let $\Y_{k,b}$ be a $k$-graph with two edges that share exactly $b$ vertices. In general, given two (hyper)graphs $\G$ and $\h$, a \emph{$\G$-tiling} is a sub(hyper)graph of $\h$ that consists of vertex-disjoint copies of $\G$. A $\G$-tiling is \emph{perfect} if it is a spanning sub(hyper)graph of $\h$. The following lemma is the main step in our proof of Lemma \ref{lemP} and
we prove it in the next subsection. Note that it generalizes \cite[Lemma 3.1]{CDN} of Czygrinow, DeBiasio, and Nagle.

\begin{lemma}[$\Y_{k,b}$-tiling Lemma] \label{lem:F}
Given integers $k\ge 3$, $1\le b<k$ and constants $\r,\beta >0$, there exist $0<\e'<\r \beta$ and an integer $n'$ such that the following holds. Suppose $\h$ is a $k$-graph on $n>n'$ vertices with $\deg(S)\ge (\tfrac1{2k-b} - \gamma)n$ for all but at most $\e' n^{k-1}$ sets $S\in \binom V{k-1}$,
then there is a $\Y_{k,b}$-tiling that covers all but at most $\beta n$ vertices of $\h$ unless $\h$ contains a vertex set $B$ such that $|B|=\lfloor \frac{2k - b-1}{2k-b}n\rfloor$ and $e(B)<6\r n^k$.
\end{lemma}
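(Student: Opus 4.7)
\medskip
\noindent\textbf{Proof plan.}
The plan is to argue by contradiction. Suppose the maximum $\Y_{k,b}$-tiling $\mathcal{M}$ of $\h$ leaves $|U|>\beta n$ vertices uncovered, where $U:=V\setminus V(\mathcal{M})$, and produce the set $B$ claimed in the lemma. First I would perform a standard cleanup: averaging the at-most-$\e' n^{k-1}$ bad $(k-1)$-sets over the $n$ vertices, all but $O(\sqrt{\e'})\,n$ vertices lie in at most $\sqrt{\e'}\,n^{k-2}$ bad $(k-1)$-sets, so after deleting this small set we may assume every $(k-1)$-set has degree at least $\bigl(\tfrac{1}{2k-b}-\r-o(1)\bigr)n$. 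The maximality of $\mathcal{M}$ then forces $\h[U]$ to be $\Y_{k,b}$-free: for every $b$-set $S\subset U$, the family of $(k-b)$-sets $T\subset U$ with $S\cup T\in E$ must be pairwise intersecting, because two disjoint such $T$'s produce a $\Y_{k,b}$ in $U$. Applying the Erd\H{o}s--Ko--Rado bound to each such family (universe size $|U|-b\ge 2(k-b)$ for $n$ large) yields
\[
e(\h[U])\le \frac{1}{\binom{k}{b}}\binom{|U|}{b}\binom{|U|-b-1}{k-b-1}=O(n^{k-1})=o(\r n^{k}).
\]

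Next I split on the size of $U$ relative to $u_{0}:=\lfloor(2k-b-1)n/(2k-b)\rfloor$. If $|U|\ge u_{0}$, I simply take any $B\subseteq U$ of size $u_{0}$: then $e(B)\le e(\h[U])<6\r n^{k}$ for $n$ large, so $B$ is the desired extremal set, which contradicts the assumption that no such $B$ exists.

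The substantive case is $\beta n<|U|<u_{0}$. Here the goal is to contradict the maximality of $\mathcal{M}$ by a $2$-for-$1$ exchange. Concretely, I would seek some $M\in\mathcal{M}$ and some $U^{*}\subseteq U$ with $|U^{*}|=2k-b$ such that the $2(2k-b)$-vertex subhypergraph $\h[V(M)\cup U^{*}]$ admits a perfect $\Y_{k,b}$-tiling, i.e.\ two vertex-disjoint $\Y_{k,b}$-copies $C_1,C_2$ partitioning $V(M)\cup U^{*}$. Replacing $M$ by $C_1,C_2$ produces a $\Y_{k,b}$-tiling of size $|\mathcal{M}|+1$, contradicting maximality. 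Existence of such a pair $(M,U^{*})$ is established by averaging the codegree lower bound over random choices of $M\in\mathcal{M}$ and $U^{*}\subseteq U$, to produce sufficiently many $\Y_{k,b}$-copies, and then enough disjoint pairs of $\Y_{k,b}$-copies, inside $\h[V(M)\cup U^{*}]$. If on the contrary \emph{no} $(M,U^{*})$ supports such a pair, then aggregating these local deficiencies across all $M\in\mathcal{M}$ (of which there are roughly $n/(2k-b)$) produces a set $A\subset V$ of size $\lceil n/(2k-b)\rceil$---with one designated "center" per $M$ absorbing the forbidden structure---such that $e(V\setminus A)<6\r n^{k}$. Setting $B:=V\setminus A$ yields $|B|=u_{0}$ and the desired extremal structure.

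The main obstacle is this last case: converting the purely local "no exchange at $M$" condition into a global extremal statement of exactly the stipulated quantitative form $|B|=u_{0}$, $e(B)<6\r n^{k}$. The bookkeeping between $\e'$, $\r$, $\beta$, the $O(\sqrt{\e'})\,n$ preprocessing loss, and the EKR-type bound on $e(\h[U])$ is routine; the delicate work is tuning the aggregation so that the resulting "core" $A$ has cardinality exactly $\lceil n/(2k-b)\rceil$ and the resulting bound on $e(V\setminus A)$ matches the constant $6\r$.
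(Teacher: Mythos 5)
Your overall frame (take a maximum $\Y_{k,b}$-tiling, let $U$ be the uncovered set, note that maximality plus Erd\H{o}s--Ko--Rado forces $\h[U]$ to be $\Y_{k,b}$-free and hence sparse) matches the paper, but the heart of the argument --- the case $\beta n<|U|<\lfloor\frac{2k-b-1}{2k-b}n\rfloor$ --- is exactly where your proposal has a genuine gap, and the local exchange you propose is not the right one. You ask for an $M\in\mathcal{M}$ and a $(2k-b)$-set $U^*\subseteq U$ such that $\h[V(M)\cup U^*]$ is \emph{perfectly} tiled by two disjoint copies of $\Y_{k,b}$, and you claim such a pair can be produced by ``averaging the codegree lower bound'' over choices of $M$ and $U^*$. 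The codegree hypothesis is useless at this scale: it controls degrees into the whole vertex set, and says nothing about a fixed set of $2(2k-b)$ vertices, so this step would fail as stated. The mechanism that actually works (and is what the paper uses) is at the level of single vertices and the linear-size set $U$: call a covered vertex $v$ ``good'' if $\deg(v,U)\ge (2k-b)^2\binom{|U|}{k-2}$; then by the EKR fact the link of a good vertex in $U$ contains a copy of $\Y_{k-1,b-1}$, so a good vertex can be re-completed to a copy of $\Y_{k,b}$ using only fresh vertices of $U$. Maximality then shows each tile contains at most one good vertex, and a codegree sum over all $(k-1)$-sets $S\in\binom{U}{k-1}$ (splitting $\deg(S)=\deg(S,U)+\deg(S,V')$ and using the sparsity of $\h[U]$) shows the set $C$ of good vertices has size at least $(\frac{1}{2k-b}-2\r)n$. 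None of this appears in your sketch.

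The second missing piece is the aggregation step, which you explicitly flag as unresolved; your ``one designated center per tile'' cannot work quantitatively, because the number of tiles is at most $(n-|U|)/(2k-b)<(1-\beta)n/(2k-b)$, so the centers alone are too few to form a set of size $\lceil n/(2k-b)\rceil$, and more importantly you have no argument bounding $e(V\setminus A)$: that set contains all the non-center vertices of every tile together with $U$, and its sparsity is not a consequence of $\h[U]$ being $\Y_{k,b}$-free. The paper's resolution is a further, multi-tile replacement argument: letting $A$ be $U$ together with the non-$C$ vertices of the tiles that do contain a $C$-vertex, one shows $\h[A]$ itself contains no copy of $\Y_{k,b}$ (if it did, each of the at most $2k-b$ tiles it meets could be rebuilt greedily from its $C$-vertex inside $U$, yielding a strictly larger tiling), whence $e(A)<\binom{n}{k-1}$; combined with $|C|\ge(\frac{1}{2k-b}-2\r)n$ (so only $O(\r n)$ tiles lack a $C$-vertex) this gives $e(V\setminus C)<6\r n^k$, and $B$ is any set of the prescribed size inside $V\setminus C$, which exists because $|C|\le$ (number of tiles) $\le n/(2k-b)$. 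Without an analogue of the good-vertex set $C$, the lower bound on its size, and the $\Y_{k,b}$-freeness of $A$, your proposal does not close, and the delicate part you defer is precisely the content of the lemma.
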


Now we are ready to prove Lemma \ref{lemP}.
\begin{proof}[Proof of Lemma \ref{lemP}]
Fix integers $k,\ell$, $0<\r_3,\a<1$. Let $\e', n'$ be the constants returned from Lemma \ref{lem:F} with $b=2\ell$, $\r=2\r_3$, and $\beta=\a/2$. Thus $\e'<\r \beta=\r_3 \a$. Let $T_0$ be the constant returned from Corollary \ref{prop16old} with $c=\frac 1{2(k- \ell)} - \r_3$, $\e=(\e')^2/16$, $d=\r_3/2$ and 
$t_0 > \max\{n', 4k/\r_3\}$. Furthermore, let $p= \frac{2T_0}{(d-2\e)\e}$. 

Let $n$ be sufficiently large and let $\h$ be a $k$-graph on $n$ vertices with $\delta_{k-1}(\h) \ge (\frac 1{2(k-\ell)} - \r_3) n$. Applying Corollary \ref{prop16old} with the constants chosen above, we obtain an $\e$-regular partition and a cluster hypergraph $\K = \K(\e, d)$ on $[t]$ such that for all but at most $\sqrt \e t^{k-1}$ $(k-1)$-sets $S\in \binom{[t]}{k-1}$,
\[
\deg_{\K}(S)\ge \left(\frac 1{2(k-\ell)} - \r_3 - d - \sqrt\e \right)t - (k-1) \ge \left(\frac 1{2(k-\ell)} - 2\r_3 \right)t,
\]
because $d=\r_3/2$, $\sqrt\e=\e'/4<\r_3/4$ and $k-1< \r_3 t_0/4\le \r_3 t/4$.
Let $m$ be the size of clusters, then $(1-\e)\frac nt\le m\le \frac nt$. Applying Lemma \ref{lem:F} with the constants chosen above, we derive that
either there is a $\Y_{k,2\ell}$-tiling $\mathscr Y$ of $\K$ which covers all but at most $\beta t$ vertices of $\K$ or there exists a set $B\subseteq V(\K)$, such that $|B| = \lfloor \frac{2k-2\ell-1}{2(k-\ell)}t \rfloor $ and $e_{\K}(B)\le 12\r_3 t^{k}$. In the latter case, let $B'\subseteq V(\h)$ be the union of the clusters in $B$. By regularity,
\[
e_{\h}(B')\le e_{\K}(B)\cdot m^k + \binom tk \cdot d \cdot m^k + \e \cdot \binom tk \cdot m^k + t \binom m2 \binom n{k-2},
\]
where the right-hand side bounds the number of edges from regular $k$-tuples with high density, edges from regular $k$-tuples with low density, edges from irregular $k$-tuples and edges that lie in at most $k-1$ clusters. 
Since $m\le \frac nt$, $\e < \r_3/ 16$, $d=\r_3/2$, and $t^{-1}< t_0^{-1}< \r_3/(4k)$, we obtain that
\begin{align*}
e_{\h}(B')\le 12\r_3 t^k \cdot \left( \frac nt \right)^k + \binom tk \frac{\r_3}{2}  \left( \frac nt \right)^k  + \frac{\r_3}{16} \binom tk \left( \frac nt \right)^k + t \binom{n/t}2 \binom n{k-2} <13\r_3 n^k.
\end{align*}
Note that $|B'|= \lfloor \frac{2k-2\ell-1}{2(k-\ell)}t \rfloor m \le \frac{2k-2\ell-1}{2(k-\ell)}t \cdot \frac nt= \frac{2k-2\ell-1}{2(k-\ell)} n$, and consequently $|B'|\le \lfloor \frac{2k-2\ell-1}{2(k-\ell)}n \rfloor$. On the other hand,
\begin{align*}
|B'| &= \left\lfloor \frac{2k-2\ell-1}{2(k-\ell)}t \right\rfloor m\ge \left( \frac{2k-2\ell-1}{2(k-\ell)}t-1 \right)(1-\e)\frac nt \ge \left( \frac{2k-2\ell-1}{2(k-\ell)}t - \e \frac{2k-2\ell-1}{2(k-\ell)}t - 1 \right)\frac nt\\
&\ge \left( \frac{2k-2\ell-1}{2(k-\ell)}t-\e t \right)\frac nt =\frac{2k-2\ell-1}{2(k-\ell)}n-\e n.
\end{align*}
By adding at most $\e n$ vertices from $V\setminus B'$ to $B'$, we get a set $B''\subseteq V(\h)$ of size exactly $\lfloor \frac{2k-2\ell-1}{2(k-\ell)}n \rfloor$, with $e(B'')\le e(B') + \e n \cdot n^{k-1}<14\r_3 n^k$. Hence $\h$ is $14\r_3$-extremal.

In the former case, let $m'=\lfloor m/2 \rfloor$. If $m$ is odd, we throw away one vertex from each cluster covered by $\mathscr Y$ (we do nothing if $m$ is even).
Thus, the union of the clusters covered by $\mathscr Y$ contains all but at most $\beta t m+|V_0| + t\le \a n/2 + 2\e n$ vertices of $\h$. 
We take the following procedure to each member $\Y'\in \mathscr Y$. 
Suppose that $\Y'$ has the vertex set $[2k-2\ell]$ with edges $\{1, \dots, k\}$ and $\{k-2\ell+1, \dots, 2k-2\ell\}$.
For $i\in [2k-2\ell]$, let $W_i$ denote the corresponding cluster in $\h$. We split each $W_i$, $i= k-2\ell+1, \dots, k$, into two disjoint sets $W_i^1$ and $W_i^2$ of equal size. Then each of the $k$-tuples $(W_{k-2\ell+1}^1, \dots, W_{k}^1, W_1,\dots, W_{k-2\ell})$ and $(W_{k-2\ell+1}^2, \dots, W_{k}^2, W_{k+1},\dots, W_{2k-2\ell})$ is
$(2\e ,d')$-regular for some $d'\ge d$ and of sizes $m', \dots, m'$, $2m', \dots, 2m'$. Applying Lemma \ref{lem:path} to these two $k$-tuples, we find a family of 
at most $\frac{2k}{(d'- 2\e)2\e}\le \frac{k}{(d - 2\e)\e}$ 
disjoint loose paths in each $k$-tuple covering all but at most $2k (2\e) m' \le 2k\e m$ vertices.
Since $|\mathscr Y|\le \frac t{2k-2\ell}$, we thus obtain a path-tiling that consists of at most 
$2 \frac t{2k-2\ell}\frac{k}{(d-2\e)\e} \le \frac{2T_0}{(d-2\e)\e} = p$ paths and covers all but at most
\[
2\cdot 2k\e m \cdot \frac{t}{2k-2\ell}+\a n/2+2\e n < 6\e n + \a n/2  < \a n
\]
vertices of $\h$, where we use $2k-2\ell>k$ and $\e=(\e')^2/16<(\r_3 \a)^2/16<\a/12$. This completes the proof.
\end{proof}

\subsection{Proof of Lemma \ref{lem:F}}
We first give an upper bound on the size of $k$-graphs containing no copy of $\Y_{k,b}$. In its proof, we use the concept of \emph{link (hyper)graph}: given a $k$-graph $\h$ with a set $S$ of at most $k-1$ vertices, the \emph{link graph} of $S$ is the $(k-|S|)$-graph with vertex set $V(\h)\setminus S$ and edge set $\{e\setminus S: e\in E(\h), S\subseteq e\}$. Throughout the rest of the paper, we frequently use the simple identity $\binom{m}{b} \binom{m-b}{k-b} = \binom{m}{k} \binom{k}{b}$, which holds for all integers $0\le b\le k\le m$.

\begin{fact}\label{fact:Y}
Let $0\le b< k$ and $m\ge 2k-b$. If $\h$ is a $k$-graph on $m$ vertices containing no copy of $\Y_{k,b}$, then $e(\h)< \binom m{k-1}$.
\end{fact}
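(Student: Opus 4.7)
The plan is to bound $e(\h)$ by applying the Erd\H{o}s--Ko--Rado theorem to every link $(k-b)$-graph of $\h$ and then double counting over all $b$-sets.

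First I would observe that if $\h$ is $\Y_{k,b}$-free, then for every $b$-set $S\subseteq V(\h)$ the link $(k-b)$-graph $L_S := \{\,e\setminus S : e\in E(\h),\; S\subseteq e\,\}$ on $V(\h)\setminus S$ is an \emph{intersecting} family. Indeed, if two $(k-b)$-sets $e_1', e_2'\in L_S$ were disjoint, then $S\cup e_1'$ and $S\cup e_2'$ would be two edges of $\h$ meeting in exactly $b$ vertices, i.e.\ a copy of $\Y_{k,b}$.

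Next, since $|V(\h)\setminus S| = m - b \ge 2(k-b)$, the Erd\H{o}s--Ko--Rado theorem applies to $L_S$ and gives $\deg_\h(S) = |L_S| \le \binom{m-b-1}{k-b-1}$. Summing over all $\binom{m}{b}$ choices of $S$ and using the standard identity $\binom{k}{b}\,e(\h) = \sum_{|S|=b} \deg_\h(S)$ yields
\[
e(\h) \;\le\; \frac{\binom{m}{b}\binom{m-b-1}{k-b-1}}{\binom{k}{b}}.
\]
Applying the identity $\binom{m}{b}\binom{m-b}{k-b} = \binom{m}{k}\binom{k}{b}$ highlighted in the excerpt, together with $\binom{m-b-1}{k-b-1} = \tfrac{k-b}{m-b}\binom{m-b}{k-b}$, simplifies the right-hand side to $\binom{m}{k}\cdot\tfrac{k-b}{m-b}$, which I would then rewrite as $\binom{m}{k-1}\cdot\tfrac{(m-k+1)(k-b)}{k(m-b)}$.

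The remaining step is a short arithmetic check that $(m-k+1)(k-b) < k(m-b)$ for all $0\le b<k$ and $m\ge 2k-b$; expanding gives $b(2k-m-1) - k(k-1) < 0$, which is clear since $2k-m-1 \le b-1 \le k-2$. This produces the strict inequality $e(\h) < \binom{m}{k-1}$. The degenerate case $b=0$ fits the same framework ($\binom{k}{0} = \binom{m}{0} = 1$, so EKR applied to $\h$ itself directly yields $e(\h)\le\binom{m-1}{k-1}<\binom{m}{k-1}$ for $m\ge 2k$). I do not anticipate a serious obstacle here; the only care needed is the final algebraic comparison and noting that EKR is valid whenever $k-b\ge 1$, which is exactly the hypothesis $b<k$.
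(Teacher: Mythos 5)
Your proposal is correct and follows essentially the same route as the paper: show each link $(k-b)$-graph is intersecting, apply Erd\H{o}s--Ko--Rado (valid since $m-b\ge 2(k-b)$), double count over $b$-sets, and finish with the same binomial identity and a short arithmetic comparison. The only cosmetic difference is that you verify the final inequality by expanding $(m-k+1)(k-b)<k(m-b)$, whereas the paper observes directly that $\frac{k-b}{k}\cdot\frac{m-k+1}{m-b}<1$.
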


\begin{proof}
Fix any $b$-set $S\subseteq V(\h)$ ($S=\emptyset$ if $b=0$) and consider its link graph $L_S$. Since $\h$ contains no copy of $\Y_{k,b}$, any two edges of $L_S$ intersect. Since $m\ge 2k-b$, the Erd\H{o}s--Ko--Rado Theorem \cite{EKR} implies that $|L_S|\le \binom{m-b-1}{k-b-1}$. Thus,
\begin{align*}
e(\h)&\le \frac1{\binom{k}{b}}\binom {m}{b} \cdot \binom{m-b-1}{k-b-1}= \frac1{\binom{k}{b}}\binom {m}{b} \binom{m-b}{k-b}\frac{k-b}{m-b}=\binom{m}{k} \frac{k-b}{m-b} \\
&= \binom{m}{k-1} \frac{k-b}{k} \, \frac{m- k+1}{m-b} < \binom{m}{k-1}. \qedhere
\end{align*}
\end{proof}

\smallskip
\begin{proof}[Proof of Lemma \ref{lem:F}]

Given $\r, \beta>0$, let $\e'=\frac{\r \beta^{k-1}}{(k-1)!}$ and let $n\in \mathbb{N}$ be sufficiently large. Let $\h$ be a $k$-graph on $n$ vertices that satisfies $\deg(S)\ge (\frac1{2k-b} - \r )n$ for all but at most $\e' n^{k-1}$ $(k-1)$-sets $S$. Let $\mathscr Y=\{\Y_1, \dots, \Y_m\}$ be a largest $\Y_{k,b}$-tiling in $\h$ (with respect to $m$) and write $V_i=V(\Y_i)$ for $i\in [m]$. Let $V'=\bigcup_{i\in [m]}V_i$ and $U=V(\h)\setminus V'$. Assume that $|U|>\beta n$ -- otherwise we are done.

Let $C$ be the set of vertices $v\in V'$ such that $\deg(v, U)\ge (2k-b)^2\binom{|U|}{k-2}$. We will show that $|C|\le \frac n{2k-b}$ and $C$ covers almost all the edges of $\h$, which implies that $\h[V\setminus C]$ is sparse and $\h$ is in the extremal case.
We first observe that every $\Y_i\in \mathscr Y$ contains at most one vertex in $C$. Suppose instead, two vertices $x,y\in V_i$ are both in $C$. Since $\deg(x, U)\ge (2k-b)^2\binom{|U|}{k-2}>\binom{|U|}{k-2}$, by Fact \ref{fact:Y}, there is a copy of $\Y_{k-1,b-1}$ in the link graph of $x$ on $U$, which gives rise to $\Y'$, a copy of $\Y_{k,b}$ on $\{x\}\cup U$.
Since the link graph of $y$ on $U\setminus V(\Y')$ has at least
\[
(2k-b)^2\binom{|U|}{k-2} - (2k-b-1)\binom{|U|}{k-2}>\binom{|U\setminus V(\Y')|}{k-2}
\]
edges, we can find another copy of $\Y_{k,b}$ on $\{y\}\cup (U\setminus V(\Y'))$ by Fact~\ref{fact:Y}. Replacing $\Y_i$ in $\mathscr Y$ with these two copies of $\Y_{k,b}$ creates a $\Y_{k,b}$-tiling larger than $\mathscr Y$, contradiction.
Consequently,
\begin{align}
\sum_{S\in \binom{U}{k-1}}\deg(S, V')&\le |C|\binom {|U|}{k-1} + |V'\setminus C| (2k-b)^2\binom{|U|}{k-2} \nonumber\\
&< |C|\binom {|U|}{k-1} + (2k-b)^2 n\binom{|U|}{k-2} \quad \text{because  } |V'\setminus C|< n \nonumber \\
&=\binom {|U|}{k-1}\left(|C| +\frac{(2k - b)^2 n (k-1)}{|U|-k+2} \right). \label{eq:C1}
\end{align}
Second, by Fact \ref{fact:Y}, $e(U)\le \binom{|U|}{k-1}$ since $\h[U]$ contains no copy of $\Y_{k,b}$, which implies
\begin{equation}\label{eq:C2}
\sum_{S\in \binom{U}{k-1}}\deg(S, U)\le k \binom{|U|}{k-1}.
\end{equation}
By the definition of $\e'$, we have
\[
\e' n^{k-1}=\frac{\r \beta^{k-1}}{(k-1)!} n^{k-1}< \frac{\r |U|^{k-1}}{(k-1)!}< 2\r \binom{|U|}{k-1}
\]
as $|U|$ is large enough. At last, by the degree condition, we have
\begin{align}
\sum_{S\in \binom{U}{k-1}}\deg(S)\ge \left(\binom{|U|}{k-1}-\e' n^{k-1}\right)\left(\frac1{2k - b} - \r \right)n
>(1-2\r)\binom{|U|}{k-1}\left(\frac1{2k - b} - \r \right)n, \label{eq:C3}
\end{align}
Since $\deg(S)=\deg(S, U)+\deg(S, V')$, we combine \eqref{eq:C1}, \eqref{eq:C2} and \eqref{eq:C3} and get
\[
|C|>(1-2\r)\left(\frac1{2k - b} - \r \right)n - k - \frac{(2k - b)^2 n (k-1)}{|U|-k+2}.
\]
Since $|U|>16 k^3/\r$, we get
\[
\frac{(2k - b)^2 n (k-1)}{|U|-k+2}<\frac{4k^3 n}{|U|/2} < \r n/2.
\]
Since $2\r^2 n>k$ and $2k-b\ge 4$, it follows that $|C|>\left(\frac1{2k-b} - 2\r \right)n$.

Let $I_C$ be the set of all $i\in [m]$ such that $V_i\cap C\neq \emptyset$. Since each $V_i$, $i\in I_C$, contains one vertex of $C$, we have
\begin{equation}\label{eq:IC}
    |I_C| = |C|\ge \left(\frac1{2k-b} - 2\r \right)n \ge m - 2\r n.
\end{equation}
Let $A = (\bigcup_{i\in I_C}V_i\setminus C) \cup U$.

\begin{claim}\label{clm:edge}
$\h[A]$ contains no copy of $\Y_{k,b}$, thus $e(A)< \binom n{k-1}$.
\end{claim}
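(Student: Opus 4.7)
The plan is to show the $\Y_{k,b}$-freeness of $\h[A]$ by a swapping/augmentation argument that contradicts the maximality of $\mathscr Y$; the bound $e(A)<\binom{n}{k-1}$ will then follow from Fact~\ref{fact:Y} since $|A|\ge|U|>\beta n \ge 2k-b$.

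For each $i\in I_C$ fix a distinguished vertex $c_i\in V_i\cap C$. Suppose, toward a contradiction, that $\h[A]$ contains a copy $\Y^*$ of $\Y_{k,b}$. Set
\[
J=\{i\in I_C: V_i\cap V(\Y^*)\neq\emptyset\},
\]
so that $|J|\le|V(\Y^*)|=2k-b$ and, by the definition of $A$, $c_i\notin V(\Y^*)$ for every $i\in J$. The strategy is to remove the copies $\{\Y_i:i\in J\}$ from $\mathscr Y$ and replace them by $\Y^*$ together with, for every $i\in J$, a freshly-built copy of $\Y_{k,b}$ containing $c_i$ and $2k-b-1$ vertices of $U$; since we delete $|J|$ copies and insert $|J|+1$ copies (all pairwise vertex-disjoint by construction), this will contradict the maximality of $|\mathscr Y|$. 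The case $J=\emptyset$ is even simpler: $\mathscr Y\cup\{\Y^*\}$ is already a larger tiling.

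To construct the $|J|$ replacement copies, enumerate $J=\{i_1,\dots,i_{|J|}\}$ in an arbitrary order and process the $c_{i_j}$'s one by one. At step $j$, let $U_j\subseteq U$ be $U$ with at most $(2k-b)+(j-1)(2k-b-1)\le(2k-b)^2$ previously used vertices removed (namely $V(\Y^*)\cap U$ and the neighbors of $c_{i_1},\dots,c_{i_{j-1}}$ already chosen). Since $c_{i_j}\in C$,
\[
\deg(c_{i_j},U_j)\ge \deg(c_{i_j},U)-(2k-b)^2\binom{|U|-1}{k-3}\ge (2k-b)^2\binom{|U|}{k-2}-(2k-b)^2\binom{|U|-1}{k-3},
\]
and since $|U|>\beta n$ is large and $|U_j|\ge|U|-(2k-b)^2$, the right-hand side exceeds $\binom{|U_j|}{k-2}$. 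Fact~\ref{fact:Y} applied to the link $(k-1)$-graph of $c_{i_j}$ on $U_j$ (with parameter $b-1$ in place of $b$, noting $|U_j|\ge 2(k-1)-(b-1)=2k-b-1$) then yields a copy of $\Y_{k-1,b-1}$ in that link, which together with $c_{i_j}$ forms the desired copy of $\Y_{k,b}$ sharing exactly the vertex $c_{i_j}$ and using $2k-b-1$ fresh vertices of $U_j$.

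The only real obstacle is making sure that enough room remains in $U$ after removing $V(\Y^*)\cap U$ and all previously chosen neighbors; the computation above quantifies this, and the hypothesis $|U|>\beta n$ (together with $n$ sufficiently large) gives a comfortable margin. Once the augmentation is carried out, $\mathscr Y$ is enlarged by one, contradicting its choice as a largest $\Y_{k,b}$-tiling. Therefore $\h[A]$ is $\Y_{k,b}$-free, and Fact~\ref{fact:Y} yields $e(A)<\binom{|A|}{k-1}\le\binom{n}{k-1}$.
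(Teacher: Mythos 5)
Your argument is essentially the paper's: you greedily build, for each tiling member meeting the putative copy $\Y^*\subseteq A$, a fresh copy of $\Y_{k,b}$ through its unique $C$-vertex and $2k-b-1$ new vertices of $U$ (Fact~\ref{fact:Y} applied to the link graph on the unused part of $U$), and then swap these $|J|+1$ copies for the $|J|$ old ones to contradict the maximality of $\mathscr Y$; the case $V(\Y^*)\subseteq U$ is handled the same way as in the paper.

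One estimate is mis-stated, although the slack built into the definition of $C$ repairs it. When you pass from $U$ to $U_j$ you delete at most $(2k-b)+(j-1)(2k-b-1)$ vertices, and each deleted vertex can lie in up to $\binom{|U|-1}{k-2}$ of the $(k-1)$-sets counted by $\deg(c_{i_j},U)$ --- not $\binom{|U|-1}{k-3}$ as you wrote, since the neighbors of $c_{i_j}$ in $U$ are $(k-1)$-sets. With the corrected per-vertex loss, rounding the number of deleted vertices up to $(2k-b)^2$ is no longer good enough: $(2k-b)^2\binom{|U|}{k-2}-(2k-b)^2\binom{|U|-1}{k-2}=(2k-b)^2\binom{|U|-1}{k-3}$, which is far smaller than $\binom{|U_j|}{k-2}$ when $|U|$ is large. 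The fix is to keep the sharper count of deleted vertices: it is at most $(2k-b)+(2k-b-1)^2=(2k-b)^2-(2k-b)+1$, so
$\deg(c_{i_j},U_j)\ge (2k-b)^2\binom{|U|}{k-2}-\bigl((2k-b)^2-(2k-b)+1\bigr)\binom{|U|}{k-2}=(2k-b-1)\binom{|U|}{k-2}>\binom{|U_j|}{k-2}$,
which is exactly the accounting in the paper (there the loss is bounded by $|U_0|\binom{|U|}{k-2}$ with $|U_0|\le(2k-b)(2k-b-1)$). With this correction your proof is complete and coincides with the paper's.
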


\begin{proof}
The first half of the claim implies the second half by Fact \ref{fact:Y}. Suppose instead, $\h[A]$ contains a copy of $\Y_{k,b}$, denoted by $\Y_0$. Note that $V(\Y_0) \not\subseteq U$ because $\h[U]$ contains no copy of $\Y_{k,b}$.  Without loss of generality, suppose that $V_1, \dots, V_{j}$ contain the vertices of $\Y_0$ for some $j\le 2k - b$. For $i\in [j]$, let $c_i$ denote the unique vertex in $V_i\cap C$. We greedily construct vertex-disjoint copies of $\Y_{k,b}$ on $\{c_i\}\cup U$, $i\in [j]$ as follows. Suppose we have found $\Y'_1, \dots, \Y'_i$ (copies of $\Y_{k,b}$) for some $i<j$.
Let $U_0$ denote the set of the vertices of $U$ covered by $\Y_0, \Y'_1, \dots, \Y'_i$. Then $|U_0|\le (i+1) (2k-b-1)\le (2k-b)(2k-b-1)$. Since $\deg(c_{i+1}, U)\ge (2k-b)^2\binom{|U|}{k-2}$, the link graph of $c_{i+1}$ on $U\setminus U_0$ has at least
\[
(2k-b)^2\binom{|U|}{k-2} - |U_0| \binom{|U|}{k-2} >\binom{|U|}{k-2}
\]
edges. By Fact~\ref{fact:Y}, there is a copy of $\Y_{k,b}$ on $\{c_{i+1}\}\cup (U\setminus U_0)$. Let $\Y'_1, \dots, \Y'_j$ denote the copies of $\Y_{k,b}$  constructed in this way. Replacing $\Y_1,\dots, \Y_{j}$ in $\mathscr{Y}$ with $\Y_0, \Y'_1, \dots, \Y'_j$ gives a $\Y_{k,b}$-tiling larger than $\mathscr Y$, contradiction.
\end{proof}

Note that the edges not incident to $C$ are either contained in $A$ or intersect some $V_i$, $i\notin I_C$. By \eqref{eq:IC} and Claim \ref{clm:edge},
\begin{align*}
e(V\setminus C) &\le e(A) + (2k-b)\cdot 2\r n \binom {n-1}{k-1}  < \binom n{k-1}  + (4k - 2b)\r n \binom {n}{k-1} \\
& <4k\r n \binom {n}{k-1}<\frac{4k}{(k-1)!}\r n^k\le 6\r n^k,
\end{align*}
where the last inequality follows from $k\ge 3$. Since $|C|\le \frac n{2k - b}$, we can pick a set $B\subseteq V\setminus C$ of order $\lfloor \frac{2k - b-1}{2k-b}n\rfloor$ such that $e(B)<6\r n^k$.
\end{proof}

\section{The Extremal Theorem}
In this section we prove Theorem \ref{lemE}. Assume that $k\ge 3$, $1\le \ell < k/2$ and $0<\Delta\ll 1$.
Let $n\in (k - \ell) \mathbb{N}$ be sufficiently large. Let $\h$ be a $k$-graph on $V$ of $n$ vertices such that $\delta_{k-1}(\h) \ge \frac n{2 (k-\ell)}$. Furthermore, assume that $\h$ is $\Delta$-extremal, namely, there is a set $B\subseteq V(\h)$, such that $|B| =\lfloor \frac{(2k-2\ell-1)n }{2(k-\ell)}\rfloor$ and $e(B)\le \Delta n^k$. Let $A=V\setminus B$. Then $|A|= \lceil \frac{n}{2(k-\ell)} \rceil$.

The following is an outline of the proof. We denote by $A'$ and $B'$ the sets of the vertices of $\h$ that behave as typical vertices of $A$ and $B$, respectively. Let $V_0=V\setminus(A'\cup B')$. It is not hard to show that $A' \approx A$, $B' \approx B$, and thus $V_0 \approx \emptyset$.
In the ideal case when $V_0 = \emptyset$ and $|B'| = (2k - 2\ell -1) |A'| $, we assign a cyclic order to the vertices of $A'$, construct $|A'|$ copies of $\mathcal{Y}_{k, \ell}$ such that each copy contains one vertex of $A'$ and $2k - \ell -1$ vertices of $B'$, and any two consecutive copies of  $\mathcal{Y}_{k, \ell}$ share exactly $\ell$ vertices of $B'$. This gives rise to the desired Hamilton $\ell$-cycle of $\h$. In the general case, we first construct an $\ell$-path $\mathcal{Q}$ with ends $L_0$ and $L_1$ such that $V_0 \subseteq V(\Q)$ and $|B_1| = (2k - 2\ell - 1) |A_1| + \ell$, where $A_1 = A' \setminus V(\Q)$ and $B_1 = (B\setminus V(\Q)) \cup L_0 \cup L_1$. Next we complete the Hamilton $\ell$-cycle by constructing an $\ell$-path on $A_1 \cup B_1$ with ends $L_0$ and $L_1$.

For the convenience of later calculations, we let $\e_0 =2 k!e\Delta \ll 1$ and claim that $e(B)\le \e_0 \binom {|B|}k$.
Indeed, since $2(k-\ell)-1\ge k$, we have
\[
\frac1e\le \left(1-\frac{1}{2(k-\ell)} \right)^{2(k-\ell)-1}\le \left(1-\frac{1}{2(k-\ell)} \right)^{k}.
\]
Thus we get
\begin{equation}\label{eqB}
e(B)\le \frac{\e_0}{2k!e} n^k \le \e_0 \left(1-\frac{1}{2(k-\ell)} \right)^{k} \frac{n^k}{2k!} \le \e_0 \binom {|B|}k.
\end{equation}

In general, given two disjoint vertex sets $X$ and $Y$ and two integers $i, j\ge 0$, a set $S\subset X\cup Y$ is called an $X^i Y^j$-set if $|S\cap X|= i$ and $|S\cap Y| =j$. When $X, Y$ are two disjoint subsets of $V(\h)$ and $i+j=k$, we denote by $\h(X^i Y^j)$ the family of all edges of $\h$ that are $X^i Y^j$-sets, and let $e_{\h}(X^i Y^{j})= |\h(X^i Y^j)|$ (the subscript may be omitted if it is clear from the context).
We use $\overline{e}_{\h}(X^i Y^{k-i})$ to denote the number of non-edges among $X^i Y^{k-i}$-sets. Given a set $L\subseteq X\cup Y$ with $|L\cap X|=l_1\le i$ and $|L\cap Y|=l_2\le k-i$, we define $ \deg(L, X^i Y^{k-i})$ as the number of edges in $\h(X^i Y^{k-i})$ that contain $L$, and
$\overline \deg(L, X^i Y^{k-i})=\binom{|X|-l_1}{i-l_1} \binom{|Y|-l_2}{k-i-l_2} - \deg(L, X^i Y^{k-i})$. Our earlier notation $\deg(S, R)$ may be viewed as $\deg(S, S^{|S|} (R\setminus S)^{k- |S|})$.

\subsection{Classification of vertices}

Let $\e_1={\e_0}^{1/3} $ and $\e_2=2\e_1^2$. Assume that the partition $V(\h)=A\cup B$ satisfies that $|B| =\lfloor \frac{(2k-2\ell-1)n }{2(k-\ell)}\rfloor$ and \eqref{eqB}. In addition, assume that $e(B)$ is the smallest among all such partitions. We now define
\begin{align*}
&A':=\left\{ v\in V: \deg (v,B)\ge (1-\e_1)\binom{|B|}{k-1} \right\}, \\
&B':=\left\{ v\in V: \deg (v,B)\le \e_1\binom{|B|}{k-1} \right\}, \\
& V_0:=V\setminus(A'\cup B').
\end{align*}

\begin{claim}\label{clm:eB}
$A\cap B'\neq \emptyset$ implies that $B\subseteq B'$, and $B\cap A'\neq \emptyset$ implies that $A\subseteq A'$.
\end{claim}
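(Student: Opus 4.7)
The plan is to exploit the minimality of $e(B)$ over admissible partitions: if a single pair of vertices can be swapped between $A$ and $B$ so as to strictly reduce $e(B)$ without changing $|B|$, the new partition still satisfies \eqref{eqB} and contradicts the choice of partition. Both implications in the claim will come from the same swap argument, differing only in which defining inequality of $A'$ or $B'$ is invoked.

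For the first implication, assume $v\in A\cap B'$ and, for contradiction, pick some $u\in B\setminus B'$. Define
\[
A^* := (A\setminus\{v\})\cup\{u\},\qquad B^* := (B\setminus\{u\})\cup\{v\},
\]
so that $|B^*|=|B|$. The edges of $\h[B^*]$ that are not edges of $\h[B]$ are exactly those containing $v$, and the edges of $\h[B]$ that are not edges of $\h[B^*]$ are exactly those containing $u$; hence
\[
e(B^*)-e(B) \;=\; \deg_{B^*}(v) - \deg_B(u).
\]
Since $B^*\setminus\{v\}=B\setminus\{u\}\subseteq B$, one has $\deg_{B^*}(v)\le \deg(v,B)\le \e_1\binom{|B|}{k-1}$ from $v\in B'$, while $u\notin B'$ gives $\deg_B(u)=\deg(u,B)>\e_1\binom{|B|}{k-1}$. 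Therefore $e(B^*)<e(B)\le \e_0\binom{|B|}{k}$, so $(A^*,B^*)$ still satisfies \eqref{eqB}, contradicting the minimality of $e(B)$. Hence $B\subseteq B'$.

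The second implication is symmetric. Suppose $v\in B\cap A'$ and some $u\in A\setminus A'$ exists, and perform the analogous swap. The identity
\[
e(B^*)-e(B) \;=\; \deg_{B^*}(u) - \deg_B(v)
\]
holds, and now $v\in A'$ yields $\deg_B(v)=\deg(v,B)\ge (1-\e_1)\binom{|B|}{k-1}$ while $u\notin A'$ yields $\deg_{B^*}(u)\le \deg(u,B)<(1-\e_1)\binom{|B|}{k-1}$. Again $e(B^*)<e(B)$, contradicting minimality, so $A\subseteq A'$.

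There is essentially no real obstacle here — the claim is the standard local-improvement consequence of the extremal choice of partition and only uses $\e_1<1-\e_1$, which is immediate from $\e_0$ being small. The only bookkeeping point is the containment $B^*\setminus\{v\}\subseteq B$ (resp.\ $B^*\setminus\{u\}\subseteq B$), which is what lets us compare $\deg_{B^*}$ at the swapped vertex to the quantity $\deg(\,\cdot\,,B)$ that appears in the definitions of $A'$ and $B'$.
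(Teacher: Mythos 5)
Your proof is correct and follows essentially the same route as the paper: a single swap of the offending pair of vertices strictly decreases $e(B)$ while keeping $|B|$ fixed and preserving \eqref{eqB}, contradicting the minimality of $e(B)$ over such partitions. The bookkeeping identity $e(B^*)-e(B)=\deg_{B^*}(\cdot)-\deg_B(\cdot)$ and the containment $B^*\setminus\{\text{swapped vertex}\}\subseteq B$ are exactly what the paper's brief argument implicitly uses.
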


\begin{proof}
First, assume that $A\cap B'\neq \emptyset$. Then there is some $u\in A$ such that $\deg(u,B)\le \e_1\binom{|B|}{k-1}$. If there exists some $v\in B\setminus B'$, namely, $\deg(v, B)>\e_1\binom{|B|}{k-1}$, then we can switch $u$ and $v$ and form a new partition $A''\cup B''$ such that $|B''|=|B|$ and $e(B'')<e(B)$, which contradicts the minimality of $e(B)$.

Second, assume that $B\cap A'\neq \emptyset$. Then some $u\in B$ satisfies that $\deg(u,B)\ge (1-\e_1)\binom{|B|}{k-1}$. Similarly, by the minimality of $e(B)$, we get that for any vertex $v\in A$, $\deg(v, B)\ge (1-\e_1)\binom{|B|}{k-1}$, which implies that $A\subseteq A'$.
\end{proof}

\begin{claim}\label{clm:size}
$\{|A\setminus A'|, |B\setminus  B'|, |A'\setminus  A|, |B'\setminus  B|\}\le \e_2|B|$ and $|V_0|\le 2\e_2|B|$.
\end{claim}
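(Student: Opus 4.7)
The plan is to bound each of the five quantities separately; the two \emph{source} estimates are $|B\setminus B'|\le \e_2|B|$ and $|A\setminus A'|\le \e_2|B|$, and the remaining three follow from easy inclusions. First I would handle $|B\setminus B'|$. By definition every $v\in B\setminus B'$ satisfies $\deg(v,B)>\e_1\binom{|B|}{k-1}$, and $\sum_{v\in B}\deg(v,B)=k\cdot e(B)\le k\e_0\binom{|B|}{k}$ by the extremality bound \eqref{eqB}. Dividing yields
\[
|B\setminus B'|<\frac{k\e_0\binom{|B|}{k}}{\e_1\binom{|B|}{k-1}}=\frac{\e_0(|B|-k+1)}{\e_1}\le \e_1^2|B|<\e_2|B|,
\]
where we used $\e_0/\e_1=\e_1^2$.

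The main step, and the only place the codegree hypothesis enters, is the bound on $|A\setminus A'|$. The idea is to lower-bound $e(A^1B^{k-1})$. Applying $\deg(S)\ge n/(2(k-\ell))$ to each $(k-1)$-subset $S\subseteq B$ and splitting the single additional vertex of an edge according to whether it lies in $A$ or in $B$ gives, after summing over $S\in\binom{B}{k-1}$,
\[
e(A^1B^{k-1})+k\cdot e(B)\ge \binom{|B|}{k-1}\cdot\frac{n}{2(k-\ell)}.
\]
Together with $|A|=\lceil n/(2(k-\ell))\rceil\le n/(2(k-\ell))+1$ and $e(B)\le \e_0\binom{|B|}{k}$ this yields
\[
\overline{e}(A^1B^{k-1})=|A|\binom{|B|}{k-1}-e(A^1B^{k-1})\le \binom{|B|}{k-1}+k\e_0\binom{|B|}{k}\le 2k\e_0\binom{|B|}{k},
\]
for $n$ large enough that $|B|-k+1\ge 1/\e_0$. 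Every $v\in A\setminus A'$ has $\overline{\deg}(v,B)>\e_1\binom{|B|}{k-1}$, so the same averaging as in the previous paragraph yields $|A\setminus A'|\le 2\e_1^2|B|=\e_2|B|$.

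For the remaining bounds, observe that $\e_1<1/2$ forces $A'\cap B'=\emptyset$, hence $A'\setminus A=A'\cap B\subseteq B\setminus B'$ and $B'\setminus B=B'\cap A\subseteq A\setminus A'$, so the first two bounds transfer. Finally, $V_0\cap A\subseteq A\setminus A'$ and $V_0\cap B\subseteq B\setminus B'$ give $|V_0|\le 2\e_2|B|$. The main obstacle is really the second step: the codegree condition is a global statement about all of $V$, whereas the extremality assumption only controls $e(B)$, and one has to convert the former into a bound on $A$-to-$B$ edges before averaging can finish the job.
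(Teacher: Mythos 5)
Your proof is correct and follows essentially the same route as the paper: bound $|B\setminus B'|$ by averaging $e(B)\le\e_0\binom{|B|}{k}$ over vertices, bound $|A\setminus A'|$ by double-counting degrees of $(k-1)$-sets inside $B$ against the codegree condition and the extremality of $e(B)$, and transfer the remaining bounds via $A'\cap B'=\emptyset$. The only difference is presentational: the paper runs the second step by contradiction (summing non-degrees and pigeonholing a $(k-1)$-set $S\subset B$ with $\overline\deg(S)>|B|$), whereas you sum degrees directly and average over $A\setminus A'$ — the same double counting either way.
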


\begin{proof}
First assume that $|B\setminus B'|> \e_2|B|$. By the definition of $B'$, we get that
\[
e(B) > \frac 1k \e_1\binom {|B|}{k-1} \cdot \e_2 |B| > 2\e_0\binom {|B|}k,
\]
which contradicts \eqref{eqB}.

Second, assume that $|A\setminus A'|> \e_2|B|$. Then by the definition of $A'$, for any vertex $v\notin A'$, we have that $\overline \deg(v,B)> \e_1\binom{|B|}{k-1}$. So we get
\[
\overline e(AB^{k-1})> \e_2|B| \cdot \e_1 \binom{|B|}{k-1} = 2\e_0|B|  \binom{|B|}{k-1}.
\]
Together with \eqref{eqB}, this implies that
\begin{align*}
\sum_{S\in \binom B{k-1}}\overline\deg(S)&= k\overline{e}(B) + \overline{e}(AB^{k-1}) \\
                              &> k(1 - \e_0) \binom{|B|}k + 2\e_0|B| \binom{|B|}{k-1}  \\
                              &= ((1-\e_0)(|B|-k+1)+2\e_0|B|) \binom{|B|}{k-1} > |B|  \binom{|B|}{k-1}.
\end{align*}
where the last inequality holds because $n$ is large enough. By the pigeonhole principle, there exists a set $S\in \binom{B}{k-1}$, such that
$\overline\deg(S) > |B|=\lfloor \frac{(2k-2\ell-1)n }{2(k-\ell)}\rfloor$, contradicting \eqref{eqdeg2}.

Consequently,
\begin{align*}
&|A'\setminus A|=|A'\cap B|\le |B\setminus B'|\le \e_2|B|,   \\
&|B'\setminus B|=|A\cap B'|\le |A\setminus A'|\le \e_2|B|, \\
&|V_0|=|A\setminus A'|+|B\setminus B'|\le \e_2|B|+\e_2|B|=2\e_2 |B|. \qedhere
\end{align*}
\end{proof}

\subsection{Classification of $\ell$-sets in $B'$}
In order to construct our Hamilton $\ell$-cycle, we need to connect two $\ell$-paths. To make this possible, we want the ends of our $\ell$-paths to be $\ell$-sets in $B'$ that have high degree in $\h[A'B'^{k-1}]$. Formally, we call an $\ell$-set $L\subset V$ \emph{typical} if $\deg(L, B)\le \e_1 \binom{|B|}{k-\ell}$, otherwise \emph{atypical}. We prove several properties related to typical $\ell$-sets in this subsection.

\begin{claim}\label{fact:ltypical}
The number of atypical $\ell$-sets in $B$ is at most $\e_2\binom{|B|}{\ell}$.
\end{claim}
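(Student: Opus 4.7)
The plan is a direct double-counting argument driven by the sparsity bound $e(B)\le \e_0\binom{|B|}{k}$ from \eqref{eqB}. First I would count in two ways the pairs $(L,e)$ in which $L$ is an $\ell$-subset of $B$, $e$ is an edge of $\h$ contained in $B$, and $L\subseteq e$. Summing over edges in $B$ gives $\binom{k}{\ell}\,e(B)$; on the other hand, each atypical $\ell$-set $L\subseteq B$ contributes, by definition, more than $\e_1\binom{|B|}{k-\ell}$ such pairs, since $\deg(L,B)$ is precisely the number of edges of $\h$ lying in $B$ and containing $L$.

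Writing $X$ for the number of atypical $\ell$-sets in $B$, these two counts give
\[
X\cdot \e_1\binom{|B|}{k-\ell} \,<\, \binom{k}{\ell}\, e(B) \,\le\, \binom{k}{\ell}\e_0 \binom{|B|}{k}.
\]
I would then apply the identity $\binom{k}{\ell}\binom{|B|}{k}=\binom{|B|}{\ell}\binom{|B|-\ell}{k-\ell}$ (the simple identity highlighted just before Fact~\ref{fact:Y}) together with the trivial bound $\binom{|B|-\ell}{k-\ell}\le\binom{|B|}{k-\ell}$ to cancel the $\binom{|B|}{k-\ell}$ factor, obtaining
\[
X \,<\, \frac{\e_0}{\e_1}\binom{|B|}{\ell} \,=\, \e_1^2\binom{|B|}{\ell} \,\le\, \e_2\binom{|B|}{\ell},
\]
where the last step uses $\e_0=\e_1^3$ and $\e_2=2\e_1^2$, as set at the start of the section.

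I do not anticipate any real obstacle. The argument is essentially the natural $\ell$-set analogue of the single-vertex counting step used in Claim~\ref{clm:size} to show $|B\setminus B'|\le \e_2|B|$: in both cases the extremal sparsity of $\h[B]$ forces only a small fraction of substructures of $B$ to have abnormally large degree into $B$.
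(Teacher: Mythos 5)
Your proof is correct and follows essentially the same route as the paper: both bound the number $m$ of atypical $\ell$-sets by double counting incidences between $\ell$-sets and edges inside $B$, yielding $m\,\e_1\binom{|B|}{k-\ell}\le\binom{k}{\ell}e(B)\le\binom{k}{\ell}\e_0\binom{|B|}{k}$ and then simplifying with the binomial identity and $\e_0=\e_1^3$, $\e_2=2\e_1^2$. The only cosmetic difference is which form of the identity is used to cancel the $\binom{|B|}{k-\ell}$ factor (the paper gets $\frac{\e_2}{2}\binom{|B|-k+\ell}{\ell}$ directly, you use $\binom{|B|-\ell}{k-\ell}\le\binom{|B|}{k-\ell}$), which changes nothing.
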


\begin{proof}
Let $m$ be the number of atypical $\ell$-sets in $B$. By \eqref{eqB}, we have
\[
\frac{m \e_1 \binom{|B|}{k-\ell}}{\binom{k}{\ell}} \le e(B)\le \e_0 \binom{|B|}{k},
\]
which gives that $m\le \frac{ \e_0 \binom{k}{l} \binom{|B|}{k}}{ \e_1 \binom{|B|}{k-\ell} } = \frac{\e_2}{2} \binom{|B| - k+ \ell}{\ell} <  \e_2\binom{|B|}{\ell}$.
\end{proof}

\begin{claim}\label{fact:typ_l}
Every typical $\ell$-set $L\subset B'$ satisfies $\overline\deg(L, A' B'^{k-1})\le 4k\e_1\binom{|B'|-\ell}{k-\ell-1} |A'|$.
\end{claim}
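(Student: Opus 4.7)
The plan is to decompose $\overline\deg(L,A'B'^{k-1})=\sum_{S\in\mathcal{S}}\overline\deg(S,A')$, where $\mathcal{S}$ is the family of $(k-1)$-sets $S$ with $L\subseteq S$ and $S\setminus L\subseteq B'\setminus L$, so $|\mathcal{S}|=\binom{|B'|-\ell}{k-1-\ell}$. For each such $S$, the codegree hypothesis gives $\deg(S)\ge n/(2(k-\ell))$. Since $V\setminus A'=B'\cup V_0$ and $\deg(S,B')\le\deg(S,B)+|B'\setminus B|$, this yields $\deg(S,A')\ge \deg(S)-\deg(S,B)-|B'\setminus B|-|V_0|$. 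Combining with $|A'|\le \lceil n/(2(k-\ell))\rceil+\e_2|B|$ and $|B'\setminus B|,|V_0|\le 2\e_2|B|$ from Claim \ref{clm:size}, we obtain $\overline\deg(S,A')\le \deg(S,B)+C_k\e_2|B|$ for some constant $C_k$ depending only on $k$.

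The key step is to bound $\Sigma:=\sum_{S\in\mathcal{S}}\deg(S,B)$. Each pair $(S,v)$ counted by $\Sigma$ corresponds to an edge $e=S\cup\{v\}\supseteq L$ with $v\in B\setminus L$ and $(e\setminus L)\setminus\{v\}\subseteq B'\setminus L$. If $e\setminus L\subseteq B$, then $e$ is counted by $\deg(L,B)$ and yields at most $k-\ell$ valid pairs, contributing in total at most $(k-\ell)\deg(L,B)\le (k-\ell)\e_1\binom{|B|}{k-\ell}$ by the typicality of $L$. Otherwise $e$ uses at least one vertex of $B'\setminus B$, of which there are at most $\e_2|B|$, contributing at most $k\e_2|B|\binom{n-\ell-1}{k-\ell-1}$, which is lower-order.

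Putting these estimates together and using the identity $(k-\ell)\binom{|B|}{k-\ell}=(|B|-k+\ell+1)\binom{|B|}{k-\ell-1}$ together with $|B|\le(2k-2\ell-1)|A'|(1+o(1))$ and $\binom{|B|}{k-\ell-1}=(1+o(1))\binom{|B'|-\ell}{k-\ell-1}$, the main term is at most $(2k-2\ell-1)\e_1|A'|\binom{|B'|-\ell}{k-\ell-1}(1+o(1))$. Since $\e_2=2\e_1^2\ll\e_1$, all $O(\e_2|B|)$-error terms are absorbed, giving the required bound $4k\e_1\binom{|B'|-\ell}{k-\ell-1}|A'|$. The main subtlety will be careful bookkeeping of the $O(\e_2|B|)$-sized discrepancies between $A/A'$, $B/B'$, and $V_0$, which have to be absorbed into the slack between the true leading coefficient $2k-2\ell-1$ and the target coefficient $4k$.
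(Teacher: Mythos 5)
Your proposal is correct and follows essentially the same route as the paper: you decompose over the $(k-1)$-sets $S$ with $L\subseteq S\subseteq B'$, apply the codegree bound to each, and use the same double count (each edge containing $L$ with the rest in $B$ is hit by at most $k-\ell$ pairs) to reduce the $B$-contribution to $(k-\ell)\deg(L,B)$, absorbing $V_0$ and $B'\setminus B$ as $\e_2$-order errors. The only cosmetic difference is that you bound the non-degree directly and compare with $\deg(L,B)$ rather than $\deg(L,B')$, which is an equivalent reformulation of the paper's argument.
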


\begin{proof}
Fix a typical $\ell$-set $L\subset B'$ and consider the following sum,
\[
\sum_{L\subset D\subset B', |D|=k-1}\deg(D)=\sum_{L\subset D\subset B', |D|=k-1}(\deg(D, A') + \deg(D, B') + \deg(D, V_0)).
\]
By \eqref{eqdeg2}, the left hand side is at least $\binom{|B'|-\ell}{k-\ell-1}|A|$. On the other hand,
\[
\sum_{L\subset D\subset B', |D|=k-1}(\deg(D, B') + \deg(D, V_0)) \le (k-\ell) \deg(L, B') +\binom{|B'|-\ell}{k-\ell-1}|V_0|.
\]
Since $L$ is typical and $|B'\setminus B| \le \e_2 |B|$ (Claim \ref{clm:size}), we have
\begin{align*}
\deg(L, B') & \le \deg(L, B) + |B'\setminus B| \binom{|B'|-1}{k- \ell-1} \\
& \le \e_1 \binom{|B|}{k-\ell} + \e_2 |B| \binom{|B'|-1}{k- \ell-1}.
\end{align*}
Since $\e_2\ll \e_1$ and $| |B| - |B'| | \le \e_2 |B|$, it follows that
\[
(k-\ell) \deg(L, B') \le \e_1 |B| \binom{|B|-1}{k-\ell-1} + (k-\ell) \e_2 |B| \binom{|B'|-1}{k- \ell-1} \le 2\e_1 |B|\binom{|B'|- \ell}{k-\ell-1}.
\]
Putting these together and using Claim \ref{clm:size}, we obtain that
\begin{align*}
\sum_{L\subset D\subset B', |D|=k-1}\deg(D, A') &\ge \binom{|B'|-\ell}{k-\ell-1}\left(|A| - |V_0|\right) -2\e_1 |B|\binom{|B'|- \ell}{k-\ell-1} \\
&\ge \binom{|B'|-\ell}{k-\ell-1} \left(|A'| - 3\e_2 |B| - 2\e_1 |B|\right).
\end{align*}
Note that $\deg(L, A' B'^{k-1})= \sum_{L\subset D\subset B', |D|=k-1}\deg(D, A') $.
Since $|B|\le (2k- 2\ell -1)|A|\le (2k-2\ell) |A'|$, we finally derive that
\[
\deg(L, A' B'^{k-1})\ge  \binom{|B'|-\ell}{k-\ell-1} (1 - (2k- 2\ell)( 3\e_2 + 2\e_1))|A'| \ge
 (1 - 4k\e_1)\binom{|B'|-\ell}{k-\ell-1} |A'|.
\]
as desired.
\end{proof}

We next show that we can connect any two disjoint typical $\ell$-sets of $B'$ with an $\ell$-path of length two while avoiding any given set of $\frac n{4(k-\ell)}$ vertices of $V$.

\begin{claim}\label{clm:conn}
Given two disjoint typical $\ell$-sets $L_1,L_2$ in $B'$ and a vertex set $U \subseteq V$ with $|U|\le \frac n{4(k-\ell)}$, there exist a vertex $a\in A'\setminus U$ and a $(2k- 3\ell-1)$-set $C\subset B'\setminus U$ such that $L_1\cup L_2\cup \{a\}\cup C$ spans an $\ell$-path (of length two) ended at $L_1, L_2$.
\end{claim}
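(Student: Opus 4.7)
My plan is to build the $\ell$-path of length two as two edges $e_1\supseteq L_1$, $e_2\supseteq L_2$ in $\h(A'B'^{k-1})$ that share a common $\ell$-set $M=e_1\cap e_2$, with the sole $A'$-vertex $a$ routed into $M$. Concretely, if $e_i=L_i\cup\{a\}\cup F_i$ for some $F_i\in\binom{B'\setminus L_i}{k-\ell-1}$ satisfying $|F_1\cap F_2|=\ell-1$ and $F_1,F_2\subseteq B'\setminus U$, then $C:=F_1\cup F_2$ is a $(2k-3\ell-1)$-subset of $B'\setminus U$, and ordering $L_1\cup L_2\cup\{a\}\cup C$ as $L_1,\,F_1\setminus F_2,\,M,\,F_2\setminus F_1,\,L_2$ (with $M=\{a\}\cup(F_1\cap F_2)$) yields the desired $\ell$-path with ends $L_1,L_2$. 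The labeling is valid since $\ell<k/2$ makes each block nonempty. Thus the task reduces to locating $a$, $F_1$, and $F_2$ with the required properties.

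\emph{Finding $a$ good for both $L_1$ and $L_2$.} By Claim~\ref{fact:typ_l}, $\overline{\deg}(L_i,A'B'^{k-1})\le 4k\e_1\binom{|B'|-\ell}{k-\ell-1}|A'|$ for $i=1,2$. Set
\[
X_i:=\Bigl\{a\in A':\ \#\bigl\{F\in\tbinom{B'\setminus L_i}{k-\ell-1}:\ L_i\cup\{a\}\cup F\notin E(\h)\bigr\}\le\sqrt{4k\e_1}\tbinom{|B'|-\ell}{k-\ell-1}\Bigr\}.
\]
Markov's inequality gives $|A'\setminus X_i|\le \sqrt{4k\e_1}|A'|$. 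From Claim~\ref{clm:size}, $|A'|\ge n/(2(k-\ell))-\e_2 n$; combined with $|U|\le n/(4(k-\ell))$ this yields $|(X_1\cap X_2)\setminus U|\ge n/(8(k-\ell))>0$ provided $\Delta$, and hence $\e_1,\e_2$, is small enough. Fix any such $a$.

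\emph{Matching the intersections.} Let $B^{**}:=B'\setminus(U\cup L_1\cup L_2\cup\{a\})$ and $N_i:=\{F\in\binom{B^{**}}{k-\ell-1}:L_i\cup\{a\}\cup F\in E(\h)\}$. Since $\ell<k/2$ gives $2k-2\ell-1\ge k$, one checks $\bigl(|B'|-\ell-|B^{**}|\bigr)/(|B'|-k+2)\le 1/(2k)+o(1)$, so a Bernoulli-type estimate yields $\binom{|B^{**}|}{k-\ell-1}\ge\tfrac12\binom{|B'|-\ell}{k-\ell-1}$ for large $n$. Because $B^{**}\subseteq B'\setminus L_i$, the membership $a\in X_i$ forces
\[
|N_i|\ge\binom{|B^{**}|}{k-\ell-1}-\sqrt{4k\e_1}\binom{|B'|-\ell}{k-\ell-1}\ge(1-2\sqrt{4k\e_1})\binom{|B^{**}|}{k-\ell-1}.
\]
The number of ordered pairs $(F_1,F_2)\in\binom{B^{**}}{k-\ell-1}^{2}$ with $|F_1\cap F_2|=\ell-1$ equals $T:=\binom{|B^{**}|}{k-\ell-1}\binom{k-\ell-1}{\ell-1}\binom{|B^{**}|-k+\ell+1}{k-2\ell}$, while the number of such pairs with $F_1\notin N_1$ or $F_2\notin N_2$ is at most $4\sqrt{4k\e_1}\,T$. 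For $\e_1$ small enough a valid pair $(F_1,F_2)\in N_1\times N_2$ with $|F_1\cap F_2|=\ell-1$ therefore exists, completing the construction.

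The main obstacle is forcing $|e_1\cap e_2|$ to be exactly $\ell$ while relying only on the single-$\ell$-set degree information supplied by Claim~\ref{fact:typ_l}; routing $a$ into the middle $M$ keeps both $e_1$ and $e_2$ inside $\h(A'B'^{k-1})$ so that the same claim governs both edges simultaneously, after which the ``matching'' step is straightforward counting.
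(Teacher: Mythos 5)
Your construction is correct and is essentially the paper's: both proofs route the single $A'$-vertex $a$ into the shared $\ell$-set of the two edges and rely on Claim~\ref{fact:typ_l} to produce $a\in A'\setminus U$ together with $F_1,F_2\subset B'\setminus U$ satisfying $|F_1\cap F_2|=\ell-1$. The only (cosmetic) difference is in the finishing step: the paper finds a high-degree vertex $a$ in the $(k-\ell)$-graph of common neighbors of $L_1,L_2$ and applies Fact~\ref{fact:Y} to its link to get a copy of $\Y_{k-\ell-1,\ell-1}$, whereas you select $a$ via Markov's inequality applied to each $L_i$ separately and then locate the pair $(F_1,F_2)$ by directly counting pairs with intersection exactly $\ell-1$.
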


\begin{proof}
Fix two disjoint typical $\ell$-sets $L_1,L_2$ in $B'$. Using Claim~\ref{clm:size}, we obtain that $|U|\le \frac n{4(k-\ell)}\le \frac{|A|}{2}< \frac23 |A'|$ and
\[
\frac{n}{4(k-\ell)} \le \frac{|B|+1}{2(2k- 2\ell -1)} \le \frac{ (1+ 2\e_2) |B'|}{2k}< \frac{|B'|}{k}.
\]
Thus $|A'\setminus U| > \frac {|A'|}3$ and $|B'\setminus U|> \frac{k-1}{k}|B'|$. Consider a $(k-\ell)$-graph $\G$ on $(A'\cup B')\setminus U$ such that
an $A' B'^{k-\ell-1}$-set $T$ is an edge of $\G$ if and only if $T\cap U = \emptyset$ and
$T$ is a common neighbor of $L_1$ and $L_2$ in $\h$. By Claim \ref{fact:typ_l}, we have
\begin{align*}
\overline{e}({\G}) &\le
2\cdot 4k\e_1\binom{|B'|-\ell}{k-\ell-1} |A'| < 8k\e_1 \binom{\frac{k}{k-1}|B' \setminus U|}{k-\ell-1} \cdot 3 |A'\setminus U|\\
&\le 24 k\e_1 \left(\frac{k}{k-1}\right)^{k-1} \binom{|B'\setminus U|}{k-\ell-1} |A'\setminus U|.
\end{align*}
Consequently $e(\G)> \frac12 \binom{|B'\setminus U|}{k-\ell-1} |A'\setminus U|$. Hence there exists a vertex $a\in A'\setminus U$ such that $\deg_{\G}(a) > \frac12 \binom{|B'\setminus U|}{k-\ell-1} > \binom{|B'\setminus U|}{k-\ell-2}$. By Fact \ref{fact:Y}, the link graph of $a$ contains a copy of $\Y_{k-\ell-1, \ell-1}$ (two edges of the link graph sharing $\ell -1$ vertices). In other words, there exists a $(2k- 3\ell-1)$-set $C\subset B'\setminus U$ such that $C\cup \{a\}$ contains two edges of $\G$ sharing $\ell$ vertices. Together with $L_1, L_2$, this gives rise to the desired $\ell$-path (in $\h$) of length two ended at $L_1, L_2$.
\end{proof}

The following claim shows that we can always extend a typical $\ell$-set to an edge of $\h$ by adding one vertex from $A'$ and $k- \ell -1$ vertices from $B'$ such that 
every $\ell$-set of these $k - \ell - 1$ vertices is typical. This can be done even when at most $\frac n{4(k-\ell)}$ vertices of $V$ are not available.

\begin{claim}\label{clm:extend}
Given a typical $\ell$-set $L\subseteq B'$ and a set $U\subseteq V$ with $|U|\le \frac{n}{4(k-\ell)}$, there exists an $A'B'^{k-\ell-1}$-set $C\subset V\setminus U$ such that $L\cup C$ is an edge of $\h$ and every $\ell$-subset of $C\cap B'$ is typical.
\end{claim}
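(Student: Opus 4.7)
The plan is a direct counting / union-bound argument. I fix $L$ and $U$ as in the statement and count the candidate sets $C=\{a\}\cup D$ with $a\in A'\setminus U$ and $D\subseteq B'\setminus(L\cup U)$ of size $k-\ell-1$, then show the number of ``bad'' candidates (those for which $L\cup C\notin E(\h)$ or $D$ contains some atypical $\ell$-subset) is strictly less than the total, so that a valid $C$ must exist. By Claim \ref{clm:size} and the hypothesis $|U|\le n/(4(k-\ell))$, both $|A'\setminus U|$ and $|B'\setminus(L\cup U)|$ remain a constant fraction of $|A'|$ and $|B'|$ respectively, so the total candidate count is of order $|A'|\binom{|B'|}{k-\ell-1}$.

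There are two failure modes to control. First, $L\cup C$ might not be an edge of $\h$; since $L$ is typical in $B'$, Claim \ref{fact:typ_l} upper bounds the number of such $C$ by $4k\e_1\binom{|B'|-\ell}{k-\ell-1}|A'|$, a small fraction of the total when $\e_1\ll 1$. Second, some $\ell$-subset of $D$ might be atypical. To handle this I first bound the number of atypical $\ell$-subsets of $B'$ itself: those contained in $B\cap B'$ are atypical already as subsets of $B$, giving at most $\e_2\binom{|B|}{\ell}$ by Claim \ref{fact:ltypical}, while those meeting $B'\setminus B$ number at most $|B'\setminus B|\binom{|B'|}{\ell-1}\le \e_2|B|\binom{|B'|}{\ell-1}$ by Claim \ref{clm:size}. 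Each atypical $\ell$-set $L'\subset B'$ extends to at most $|A'|\binom{|B'|-\ell}{k-2\ell-1}$ candidate $C$'s with $L'\subseteq D$ (note $\ell\le k-\ell-1$ since $\ell<k/2$), so via the identity $\binom{|B'|}{\ell}\binom{|B'|-\ell}{k-2\ell-1}=\binom{k-\ell-1}{\ell}\binom{|B'|}{k-\ell-1}$, the total number of candidates failing this condition is $O(\e_2)|A'|\binom{|B'|}{k-\ell-1}$, again a small fraction of the total.

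Since $\e_1,\e_2\ll 1$ and both bad counts are small fractions of the total number of candidates, a valid $C$ exists. I expect no serious obstacle beyond the mild subtlety that typicality is defined with respect to $B$ rather than $B'$, so the bookkeeping must carefully pass between these two vertex sets; Claim \ref{clm:size} makes this a negligible correction.
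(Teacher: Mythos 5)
Your proposal is correct and follows essentially the same counting argument as the paper: the paper also union-bounds over $A'B'^{k-\ell-1}$-sets using Claim~\ref{fact:typ_l} for the non-edges and Claim~\ref{fact:ltypical} together with Claim~\ref{clm:size} for the atypical $\ell$-subsets, the only cosmetic difference being that it subtracts the at most $|U|\binom{|B'|}{k-\ell-1}\le\frac{n}{4(k-\ell)}\binom{|B'|}{k-\ell-1}$ candidates meeting $U$ rather than restricting the candidate universe to $V\setminus U$ at the outset as you do.
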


\begin{proof}
First, since $L$ is typical in $B'$, by Claim \ref{fact:typ_l}, $\overline\deg(L, A' B'^{k-1})\le 4k\e_1\binom{|B'|-\ell}{k-\ell-1} |A'|$. Second, note that a vertex in $A'$ is contained in $\binom{|B'|}{k-\ell -1}$ $A' B'^{k-\ell-1}$-sets, while a vertex in $B'$ is contained in $|A'| \binom{|B'|-1}{k-\ell-2}$ $A' B'^{k-\ell-1}$-sets. It is easy to see that $|A'| \binom{|B'|-1}{k-\ell-2} < \binom{|B'|}{k-\ell -1}$  (as $|A'|\approx \frac{n}{2k-2\ell}$ and $|B'| \approx \frac{ 2k - 2\ell -1}{2k- 2\ell} n$). We thus derive that
at most
\[|U|\binom{|B'|}{k-\ell-1}\le \frac{n}{4(k-\ell)}\binom{|B'|}{k-\ell-1}
 \]
$A'B'^{k-\ell-1}$-sets intersect $U$. Finally, by Claim \ref{fact:ltypical}, the number of atypical $\ell$-sets in $B$ is at most $\e_2 \binom{|B|}{\ell}$. Using Claim~\ref{clm:size},
we derive that the number of atypical $\ell$-sets in $B'$ is at most
\[
\e_2 \binom{|B|}{\ell} + |B'\setminus B| \binom{|B'|-1}{\ell-1}\le 2\e_2 \binom{|B'|}{\ell} + \e_2 |B|\binom{|B'|-1}{\ell-1}<  3\ell \e_2 \binom{|B'|}{\ell}.
\]
Hence at most $3\ell \e_2 \binom{|B'|}{\ell} |A'| \binom{|B'|-\ell}{k-2\ell-1}$ $A'B'^{k-\ell-1}$-sets contain an atypical $\ell$-set. In summary, at most
\[
4k\e_1\binom{|B'| - \ell}{k-\ell-1} |A'| +\frac{n}{4(k-\ell)}\binom{|B'|}{k-\ell-1} + 3\ell \e_2 \binom{|B'|}{\ell}\binom{|B'|-\ell}{k-2\ell-1}|A'|
\]
$A'B'^{k-\ell-1}$-sets fail some of the desired properties. Since $\e_1, \e_2\ll 1$ and $|A'|\approx \frac{n}{2(k-\ell)}$, the desired $A'B'^{k-\ell-1}$-set always exists.
\end{proof}

\subsection{Building a short path $\mathcal Q$}
First, by the definition of $B$, for any vertex $b\in B'$, we have
\begin{align}
\label{eq:analeft}
\deg \left(b, B' \right)&\le \deg \left(b,B \right)+|B'\setminus B|\binom{|B'|-1}{k-2}  \nonumber    \\
                                & \le \e_1 \binom {|B|}{k-1} + \e_2 |B| \binom{|B'|-1}{k-2}< 2\e_1 \binom {|B|}{k-1}.
\end{align}

The following claim is the only place where we used the exact codegree condition \eqref{eqdeg2}.
\begin{claim}\label{clm:Bpath}
Suppose that $|A\cap B'|=q>0$. Then there exists a family $\mathcal P_1$ of $2q$ vertex-disjoint edges in $B'$, each of which contains two disjoint typical $\ell$-sets.
\end{claim}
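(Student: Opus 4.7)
The plan rests on two complementary bounds in $\h[B']$, both available once one notes via Claim \ref{clm:eB} that $|A\cap B'|>0$ forces $B\subseteq B'$, so $|V\setminus B'|=|A|-q$. First I would transfer the exact codegree hypothesis: for every $(k-1)$-set $S\subseteq B'$, at most $|A|-q$ of the $\ge |A|$ neighbors of $S$ lie outside $B'$, giving $\deg_\h(S,B')\ge q$. This is the sole use of the sharp bound $\delta_{k-1}(\h)\ge \frac{n}{2(k-\ell)}$. In the opposite direction, every $v\in B'$ satisfies $\deg(v,B)\le \e_1\binom{|B|}{k-1}$ by definition, so since $|B'\setminus B|=q$ one gets the weak upper bound $\deg(v,B')\le \e_1\binom{|B|}{k-1}+q\binom{|B'|-1}{k-2}$.

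Next I would call a $(k-1)$-set $S\subseteq B'$ \emph{nice} if each of its $\ell$-subsets is typical, and an edge of $\h[B']$ \emph{good} if it contains two disjoint typical $\ell$-subsets. By Claim \ref{fact:ltypical} combined with Claim \ref{clm:size}, all but an $O(\e_2)$-fraction of $(k-1)$-sets in $B'$ are nice. A short covering argument on pairs of disjoint $\ell$-subsets of a $k$-set shows that a bad edge must contain at least $\binom{k}{\ell}/2$ atypical $\ell$-subsets; but if $S$ is nice then all atypical $\ell$-subsets of $S\cup\{v\}$ must contain $v$, yielding at most $\binom{k-1}{\ell-1}<\binom{k}{\ell}/2$ of them (the strict inequality uses $k>2\ell$). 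So any edge of $\h$ through a nice $(k-1)$-set of $B'$ is automatically good.

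The core step is then to extract a good matching of size $2q$, which I would do by contradiction. Suppose a maximum good matching $M$ has $|M|=m<2q$, and set $U:=V(M)$. Maximality together with the previous paragraph forces $\deg_\h(S,B'\setminus U)=0$ for every nice $(k-1)$-set $S\subseteq B'\setminus U$, hence $\deg_\h(S,U)\ge q$. Double-counting the pairs $(S,v)$ with $S\subseteq B'\setminus U$ nice, $v\in U$, and $S\cup\{v\}\in E(\h)$ yields
\[
q\,(1-O(\e_2))\binom{|B'\setminus U|}{k-1}\ \le\ \sum_{v\in U}\deg(v,B'),
\]
into which substituting the upper bound on $\deg(v,B')$, $|U|<2kq$, $q\le \e_2|B|$ and $|B'\setminus U|\ge (1-O(\e_2))|B|$ collapses the inequality to $1\le 2k\e_1+O(\e_2)$, contradicting $\e_1,\e_2\ll 1$ for $\Delta$ small.

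The main obstacle is this matching step. From $\delta_{k-1}(\h[B'])\ge q$ alone, a naive greedy matching yields only $\Theta(q/k)$ disjoint edges; reaching $2q$ genuinely needs the sharp upper bound $\deg(v,B)\le \e_1\binom{|B|}{k-1}$ on $B'$-vertices, and it is precisely the interplay between the exact codegree hypothesis and the extremal structure of $A',B'$ that makes the claim work.
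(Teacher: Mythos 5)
Your proof is correct and follows essentially the same route as the paper: both exploit that $B\subseteq B'$ makes the exact codegree condition give $\deg(S,B')\ge q$ for $(k-1)$-sets avoiding atypical $\ell$-subsets, that such $(k-1)$-sets are almost all of $\binom{B'}{k-1}$ (Claims \ref{fact:ltypical} and \ref{clm:size}), and that every vertex of $B'$ has degree $O(\e_1)\binom{|B|}{k-1}$ inside $B'$, so a matching of fewer than $2q$ good edges can always be extended. The only difference is cosmetic bookkeeping: the paper counts good edges globally and subtracts those meeting the matching, while you double-count incidences between nice $(k-1)$-sets outside $V(M)$ and vertices of $V(M)$; the contradiction obtained is the same.
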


\begin{proof}
Let $|A\cap B'|=q>0$. Since $A\cap B'\neq \emptyset$, by Claim \ref{clm:eB}, we have $B\subseteq B'$, and consequently $|B'|=\lfloor \frac{2k-2\ell-1 }{2(k-\ell)}n\rfloor+q$. By Claim \ref{clm:size}, we have $ q \le |A\setminus A'|\le \e_2|B|$.

Let $\mathcal B$ denote the family of the edges in $B'$ that contain two disjoint typical $\ell$-sets. We derive a lower bound for $|\mathcal{B}|$ as follows. We first pick a $(k-1)$-subset of $B$ (recall that $B\subseteq B'$) that contains no atypical $\ell$-subset. Since $2\ell\le k-1$, such a $(k-1)$-set contains two disjoint typical $\ell$-sets. By Claim \ref{fact:ltypical}, there are at most $\e_2 \binom{|B|}{\ell}$ atypical $\ell$-sets in $B\cap B'=B$ and in turn, there are at most $\e_2 \binom{|B|}{\ell} \binom{|B|-\ell }{k- \ell -1}$ $(k-1)$-subsets of $B$ that contain an atypical $\ell$-subset. Thus there are at least
\[
\binom{|B|}{k-1} - \e_2\binom{|B|}{\ell}\binom{|B|-\ell}{k-\ell-1} =  \left(1 - \binom{k-1}{\ell}\e_2 \right) \binom{|B|}{k-1}
\]
$(k-1)$-subsets of $B$ that contain no atypical $\ell$-subset. After picking such a $(k-1)$-set $S\subset B$, we find
a neighbor of $S$ by the codegree condition.  Since $|B'|=\lfloor \frac{2k-2\ell-1}{2(k-\ell)}n\rfloor+q$, by \eqref{eqdeg2}, we have $\deg \left(S, B'\right) \ge q$. We thus derive that
\[
|\mathcal{B}| \ge \left(1 - \binom{k-1}{\ell}\e_2 \right) \binom{|B|}{k-1} \frac{q}{k},
\]
in which we divide by $k$ because every edge of $\mathcal{B}$ is counted at most $k$ times.

We claim that $\mathcal B$ contains $2q$ disjoint edges. Suppose instead, a maximum matching in $\mathcal B$ has $i<2q$ edges. 
By \eqref{eq:analeft}, at most $2q k \cdot 2\e_1 \binom {|B|}{k-1}$ edges of $B'$ intersect the $i$ edges in the matching. Hence, the number of edges of $\mathcal{B}$ that are disjoint from these $i$ edges is at least
\begin{align*}
\frac{q}{k} \left(1 - \binom{k-1}{\ell}\e_2 \right) \binom{|B|}{k-1}-   4k\e_1 q \binom {|B|}{k-1}\ge \left(\frac{1}{k} - (4k+1)\e_1 \right) q\binom {|B|}{k-1} > 0,
\end{align*}
as $\e_2 \ll \e_1\ll 1$. We may thus obtain a matching of size $i+1$, a contradiction.
\end{proof}

\begin{claim}\label{clm:path}
There exists a non-empty $\ell$-path $\mathcal Q$ in $\h$ with the following properties:
\begin{itemize}
\item $V_0\subseteq V(\mathcal Q)$,
\item $|V(\mathcal Q)|\le 10k \e_2 |B|$,
\item the two ends $L_0, L_1$ of $\mathcal Q$ are typical $\ell$-sets in $B'$,
\item $|B_1|= (2k-2\ell-1)|A_1| + \ell$, where $A_1=A'\setminus V(\mathcal Q)$ and $B_1= (B'\setminus V(\mathcal Q))\cup L_0 \cup L_1$.
\end{itemize}
\end{claim}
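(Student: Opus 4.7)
The plan is to build $\mathcal{Q}$ in three stages. Stage (i): for each $v\in V_0$, construct a short absorbing sub-path containing $v$ whose two ends are typical $\ell$-sets in $B'$. Stage (ii): append a few extra ``balancing'' sub-paths whose role is to shift the counts of $A'$- and $B'$-vertices used so that the target equation $|B_1|=(2k-2\ell-1)|A_1|+\ell$ holds. Stage (iii): concatenate all sub-paths into a single $\ell$-path via Claim~\ref{clm:conn}. Throughout, the total number of used vertices will be $O(\e_2|B|)$, well below the budget $10k\e_2|B|$ and below the threshold $\frac{n}{4(k-\ell)}$ required by Claims~\ref{clm:conn} and~\ref{clm:extend}.

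For stage~(i), fix $v\in V_0$. Since $v\notin B'$ we have $\deg(v,B)\ge\e_1\binom{|B|}{k-1}$, and dropping the vertices of $B\setminus B'$ discards at most $|B\setminus B'|\binom{|B|-1}{k-2}\le\e_2|B|\binom{|B|-1}{k-2}\ll\e_1\binom{|B|}{k-1}$ edges, so at least $(\e_1/2)\binom{|B|}{k-1}$ edges $\{v\}\cup S$ have $S\subseteq B'$. The atypical-$\ell$-set count used in the proof of Claim~\ref{clm:extend} shows that only an $O(\e_2)$-fraction of the $(k-1)$-subsets of $B'$ contain any atypical $\ell$-subset, and because $2\ell\le k-1$, every other $(k-1)$-subset admits two disjoint typical $\ell$-subsets. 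Thus, greedily and while avoiding the $O(\e_2|B|)$ vertices used in prior gadgets, we can choose for every $v\in V_0$ a vertex-disjoint length-$1$ absorbing sub-path $e_v=\{v\}\cup S_v$ with $S_v\subseteq B'$ and with two typical $\ell$-set ends.

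For stage~(ii), let $a$ and $b$ denote the numbers of $A'$- and $B'$-vertices currently used in the construction. Naively linking the $|V_0|$ absorbing gadgets with $|V_0|-1$ applications of Claim~\ref{clm:conn} yields $b-(2k-2\ell-1)a=(k-\ell-1)|V_0|+\ell$. A case analysis based on Claim~\ref{clm:eB}, together with $|A|=\lceil n/(2(k-\ell))\rceil$ and $|B|=n-|A|$, shows that the remaining imbalance
\[
I:=\bigl(|B'|-(2k-2\ell-1)|A'|+\ell\bigr)-\bigl((k-\ell-1)|V_0|+\ell\bigr)
\]
is in every case a multiple of $k-\ell$ with $|I|=O(\e_2|B|)$. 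If $I\le 0$ (cases (a) and (c)), we append $|I|/(k-\ell)$ one-edge extensions via Claim~\ref{clm:extend}; each uses $1$ $A'$-vertex and $k-\ell-1$ $B'$-vertices and shifts $b-(2k-2\ell-1)a$ by $-(k-\ell)$. If $I>0$, which by Claim~\ref{clm:eB} forces $A\cap B'\ne\emptyset$ and hence $B\subseteq B'$ (case~(b)), we attach as separate sub-paths some of the $2q$ pairwise-disjoint all-$B'$ edges from Claim~\ref{clm:Bpath}; each such edge, once linked in via Claim~\ref{clm:conn}, shifts $b-(2k-2\ell-1)a$ by $+(k-\ell)$. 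If $I/(k-\ell)$ exceeds $2q$, we enlarge the gadget $e_v$ for some $v\in V_0$ into a two-edge sub-path by extending through an atypical $\ell$-subset $L$ of $S_v$; this is possible because $B\subseteq B'$ in case~(b), so $\deg(L,B')\ge\deg(L,B)>\e_1\binom{|B|}{k-\ell}$, and one can select an extending edge whose outer $\ell$-subset is typical.

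Stage~(iii) is routine: we use Claim~\ref{clm:conn} to connect the $O(\e_2|B|)$ sub-paths one by one, avoiding at each step the $O(\e_2|B|)<\frac{n}{4(k-\ell)}$ vertices already used. The resulting $\mathcal{Q}$ satisfies $|V(\mathcal{Q})|\le 10k\e_2|B|$, contains $V_0$, has typical ends $L_0,L_1\subseteq B'$, and satisfies the balance equation by construction. The main obstacle is the positive-imbalance scenario (case~(b)): typical $\ell$-sets have by definition \emph{few} extensions in $B'$, so shifting the balance upward is nontrivial, and this is precisely where the exact codegree hypothesis~\eqref{eqdeg2} (via Claim~\ref{clm:Bpath}) and the two-edge gadgets built around rare atypical $\ell$-subsets become essential.
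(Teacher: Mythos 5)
Your overall architecture (gadgets for $V_0$, all-$B'$ edges from Claim~\ref{clm:Bpath} and extensions from Claim~\ref{clm:extend} as balancing moves, connections via Claim~\ref{clm:conn}) is the same as the paper's, but the balancing arithmetic for your single-edge gadgets does not close, and the mechanism you propose to repair it is unsound. With one edge per $v\in V_0$, each gadget consumes $k-1$ vertices of $B'$ and no $A'$-vertices, which is $k-\ell$ fewer $B'$-vertices than the balance equation charges per gadget; carrying out your own bookkeeping, $I=(k-\ell)(2q+|V_0|)-\epsilon$ when $A\cap B'\neq\emptyset$ and $I=(k-\ell)\bigl(|V_0|-2|B\setminus B'|\bigr)-\epsilon$ when $A\cap B'=\emptyset$, where $\epsilon=(2k-2\ell)|A|-n\in\{0,k-\ell\}$. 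Two problems follow. First, your sign claim is wrong: a positive imbalance also occurs with $A\cap B'=\emptyset$ (e.g.\ $B'=B$ and $V_0\neq\emptyset$, so $q=0$), where Claim~\ref{clm:Bpath} supplies no all-$B'$ edges and your case~(b) fix is not even available. Second, even in case~(b) the number of $+(k-\ell)$ shifts needed is $2q+|V_0|$ or $2q+|V_0|-1$, so the ``fallback'' enlargement is not an edge case but is required for essentially every $V_0$-gadget.

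That enlargement step is the genuine gap: you propose to extend a gadget inside $B'$ through an \emph{atypical} $\ell$-subset $L$ of $S_v$, but (i) atypical $\ell$-sets need not exist at all -- if $e(B)$ is very small (even $0$), every $\ell$-set satisfies $\deg(L,B)\le\e_1\binom{|B|}{k-\ell}$ -- so the bound $\deg(L,B)>\e_1\binom{|B|}{k-\ell}$ cannot be invoked; (ii) you chose $S_v$ precisely to avoid atypical $\ell$-subsets, and nothing in the hypotheses guarantees that $v$ has any edge into $B'$ containing a suitably placed atypical $\ell$-set; and (iii) any $\ell$-set used as an interior connector of an all-$B'$ extension must have many neighbors inside $B'$, which is exactly what typicality denies, so this deficit cannot be fixed by extending through $\ell$-sets of $B'$ at all. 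The paper circumvents this by making the second edge of each gadget pass through $v$ itself: since $\deg(v,B)>\e_1\binom{|B|}{k-1}$, Fact~\ref{fact:Y} applied to the link graph of $v$ (restricted to $(k-1)$-sets of $B$ avoiding atypical $\ell$-subsets and used vertices) yields a copy of $\Y_{k-1,\ell-1}$, i.e.\ a length-two $\ell$-path through $v$ using $2k-\ell-1$ vertices of $B'$. With these gadgets the residual imbalance is automatically $\equiv\ell\pmod{k-\ell}$ and at least $\ell$ (indeed in $\{\ell,k\}$ when $A\cap B'\neq\emptyset$), so Claim~\ref{clm:extend}, together with the $2q$ edges of Claim~\ref{clm:Bpath} incorporated from the start, suffices. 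You would need to replace your stage~(i) gadgets by such two-edge gadgets (or find another guaranteed source of $+(k-\ell)$ shifts) for the proof to go through.
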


\begin{proof}
We split into two cases here.

\smallskip
\noindent\textbf{Case 1.} $A\cap B'\neq \emptyset$.
\smallskip

By Claim \ref{clm:eB}, $A\cap B'\neq \emptyset$ implies that $B\subseteq B'$.
Let $q=|A\cap B'|$. We first apply Claim \ref{clm:Bpath} and find a family $\mathcal P_1$ of vertex-disjoint $2q$ edges in $B'$. Next we associate each vertex of $V_0$ with $2k - \ell -1$ vertices of $B$ (so in $B'$) forming an
$\ell$-path of length two such that these $|V_0|$ paths are pairwise vertex-disjoint, and also vertex-disjoint from the paths in $\mathcal P_1$, and all these paths have typical ends. To see it, let $V_0=\{x_1,\dots, x_{|V_0|}\}$.
Suppose that we have found such $\ell$-paths for $x_1, \dots, x_{i-1}$ with $i \le |V_0|$.
Since $B\subseteq B'$, it follows that $A\setminus A'=(A\cap B')\cup V_0$.
Hence $|V_0|+q=|A\setminus A'|\le \e_2 |B|$ by Claim \ref{clm:size}. Therefore
\[
(2k-\ell-1)(i -1) + | V(\pp_1) | < 2k|V_0|+2kq\le 2k \e_2|B|
\]
and consequently at most $2k\e_2 |B| \binom{|B|-1}{k-2} < 2k^2\e_2 \binom{|B|}{k-1}$ $(k-1)$-sets of $B$ intersect the existing paths (including $\mathcal P_1$).
By the definition of $V_0$, $\deg(x_{i}, B)> \e_1\binom{|B|}{k-1}$. Let $\G_{x_{i}}$ be the $(k-1)$-graph on $B$ such that $e\in \G_{x_{i}}$ if
\begin{itemize}
\item $\{x_{i}\}\cup e \in E(\h)$,
\item $e$ does not contain any vertex from the existing paths,
\item $e$ does not contain any atypical $\ell$-set.
\end{itemize}
By Claim \ref{fact:ltypical},
the number of $(k-1)$-sets in $B$ containing at least one atypical $\ell$-set is at most $\e_2\binom{|B|}{\ell} \binom{|B|-\ell}{k-\ell-1}=\e_2\binom{k-1}{\ell}\binom{|B|}{k-1}$. Thus, we have
\[
e(\G_{x_i})\ge \e_1\binom{|B|}{k-1} - 2k^2\e_2 \binom{|B|}{k-1} - \e_2\binom{k-1}{\ell}\binom{|B|}{k-1} >\frac{\e_1}{2} \binom{|B|}{k-1}>\binom{|B|}{k-2},
\]
because $\e_2 \ll \e_1$ and $|B|$ is sufficiently large.
By Fact \ref{fact:Y}, $\G_{x_i}$ contains a copy of $\Y_{k-1, \ell-1}$, which gives the desired $\ell$-path of length two containing $x_i$.

Denote by $\mathcal P_2$ the family of $\ell$-paths we obtained so far. Now we need to connect paths of $\mathcal P_2$ together to a single $\ell$-path. For this purpose, we apply Claim \ref{clm:conn} repeatedly to connect the ends of two $\ell$-paths while avoiding previously used vertices. This is possible because $|V(\mathcal P_2)|= (2k-\ell)|V_0|+2kq$ and $(2k-3\ell)(|V_0|+2q-1)$ vertices are needed to connect all the paths in $\pp_2$ -- the set $U$ (when we apply Claim \ref{clm:conn}) thus satisfies
\[
|U| \le (4k- 4\ell) |V_0| + (6k- 6\ell) q - 2k + 3\ell \le 6(k-\ell) \e_2 |B| -2k + 3\ell.
\]
Let $\mathcal P$ denote the resulting $\ell$-path.  We have $|V(\mathcal P)\cap A'|= |V_0|+2q-1$ and
\begin{align*}
|V(\mathcal P)\cap B'| &= k\cdot 2q+ (2k-\ell-1)|V_0| +(2k-3\ell-1)(|V_0|+2q-1)  \\
&=2(2k- 2\ell-1) |V_0| + 2(3k - 3\ell -1)q - (2k - 3\ell -1).
\end{align*}
Let $s= (2k-2\ell-1)|A'\setminus V(\mathcal P)| - |B'\setminus V(\mathcal P)|$. We have
\begin{align*}
s &= (2k - 2\ell - 1)(|A'| - |V_0| - 2q + 1) - |B'| +  2(2k- 2\ell-1) |V_0| + 2(3k - 3\ell -1)q - (2k - 3\ell -1) \\
 &= (2k - 2\ell -1) |A'| - |B'| + (2k- 2\ell-1) |V_0| + (2k - 2\ell)q + \ell.
\end{align*}
Since $|A'| + |B'| + |V_0| = n$, we have
\begin{equation}\label{eq:s}
    s= (2k - 2\ell) (|A'| + |V_0| + q) - n + \ell.
\end{equation}
Note that $|A'| + |V_0| + q = |A|$ and
\begin{equation}\label{eq:An}
    (2k - 2\ell) |A| - n =
    \begin{cases} 0, & \mbox{if } \frac{n}{k-\ell} \mbox{ is even} \\ k- \ell, & \mbox{if } \frac{n}{k-\ell} \mbox{ is odd}. \end{cases}
\end{equation}
Thus $s= \ell$ or $s=k$. If $s=  k$, then we extend $\pp$ to an $\ell$-path $\mathcal Q$ by applying Claim \ref{clm:extend}, otherwise let $\mathcal Q=\mathcal P$. Then
\[
|V(\mathcal Q)| \le |V(\mathcal P)|+ (k- \ell) \le 6k \e_2 |B|,
\]
and $\mathcal{Q}$ has two typical ends $L_0, L_1\subset B'$. We claim that
\begin{equation}\label{eq:ABQ}
(2k-2\ell-1)|A'\setminus V(\mathcal Q)| - |B'\setminus V(\mathcal Q)| = \ell.
\end{equation}
Indeed, when $s= \ell$, this is obvious; when $s=k$, $V(\Q)\setminus V(\pp)$ contains one vertex of $A'$ and $k-\ell-1$ vertices of $B'$ and thus
\[
(2k-2\ell-1)|A'\setminus V(\mathcal Q)| - |B'\setminus V(\mathcal Q)|=s - (2k-2\ell-1) + (k-\ell-1) =\ell.
\]
Let $A_1=A'\setminus V(\mathcal Q)$ and $B_1= (B'\setminus V(\mathcal Q))\cup L_0\cup L_1$. We derive that $|B_1|=(2k-2\ell-1)|A_1| + \ell$ from \eqref{eq:ABQ}.

\smallskip
\noindent\textbf{Case 2.} $A\cap B'= \emptyset$.
\smallskip

Note that $A\cap B'= \emptyset$ means that $B'\subseteq B$. Then we have
\begin{equation}\label{eq:A'B'}
|A'| + |V_0| = |V\setminus B'| = |A| + |B\setminus B'|.
\end{equation}

If $V_0\neq \emptyset$, we handle this case similarly as in Case 1 except that we do not need to construct $\mathcal P_1$.
By Claim \ref{clm:size}, $|B\setminus B'|\le \e_2|B|$ and thus for any vertex $x\in V_0$,
\begin{align}
\deg(x, B')&\ge \deg(x,B)-|B\setminus B'|\cdot \binom{|B|-1}{k-2} \nonumber \\
&\ge \e_1\binom{|B|}{k-1} - (k-1)\e_2\binom{|B|}{k-1}>\frac{\e_1}{2}\binom{|B'|}{k-1}. \label{eq:medium}
\end{align}
As in Case 1, we let $V_0=\{x_1,\dots, x_{|V_0|}\}$ and cover them with vertex-disjoint $\ell$-paths of length two. Indeed, for each $i  \le |V_0|$, we construct $\G_x$ as before and show that $e(\G_{x_i})\ge \frac{\e_1}{4} \binom{|B'|}{k-1}$. We then apply Fact \ref{fact:Y} to $\G_{x_i}$ obtaining a copy of $\Y_{k-1, \ell-1}$, which gives an $\ell$-path of length two containing $x_i$.
As in Case 1, we connect these paths to a single $\ell$-path $\mathcal P$ by applying Claim \ref{clm:conn} repeatedly. Then $|V(\mathcal P)|=(2k-\ell)|V_0|+(2k-3\ell)(|V_0|-1)$. Define $s$ as in Case~1. Thus \eqref{eq:s} holds with $q=0$. Applying \eqref{eq:A'B'} and \eqref{eq:An}, we derive that
\begin{equation}\label{eq:s2}
s = 2(k-\ell) (|A|+ |B\setminus B'|) - n +\ell =
\begin{cases} \ell + 2(k-\ell)|B\setminus B'|, & \mbox{if } \frac{n}{k-\ell} \mbox{ is even} \\
k+ 2(k-\ell)|B\setminus B'|, & \mbox{if } \frac{n}{k-\ell} \mbox{ is odd}, \end{cases}
\end{equation}
which implies that $s\equiv \ell \mod (k-\ell)$. We extend $\mathcal P$ to an $\ell$-path $\mathcal Q$ by applying Claim \ref{clm:extend} $\frac{s-\ell}{k-\ell}$ times. Then
\[
|V(\mathcal Q)|=|V(\mathcal P)|+s-\ell \le (4k - 4\ell)|V_0| - 2k+ 3\ell + k- \ell + 2(k-\ell)|B\setminus B'|\le 10k \e_2 |B|
\]
by Claim \ref{clm:size}. Note that $\mathcal{Q}$ has two typical ends
$L_0, L_1\subset B'$. Since $V(\Q)\setminus V(\pp)$ contains $\frac{s-\ell}{k-\ell}$ vertices of $A'$ and $\frac{s-\ell}{k-\ell}(k-\ell-1)$ vertices of $B'$, we have
\[
(2k-2\ell-1)|A'\setminus V(\mathcal Q)| - |B'\setminus V(\mathcal Q)|=s - \frac{s-\ell}{k-\ell}(2k-2\ell-1) + \frac{s-\ell}{k-\ell}(k-\ell-1) =\ell.
\]
We define $A_1$ and $B_1$ in the same way and similarly we have $|B_1|=(2k-2\ell-1)|A_1| + \ell$.

When $V_0=\emptyset$, we pick an arbitrary vertex $v\in A'$ and form an $\ell$-path $\mathcal{P}$ of length two with typical ends such that $v$ is in the intersection of the two edges. This is possible by the definition of $A'$. Define $s$ as in Case~1. It is easy to see that \eqref{eq:s2} still holds. We then extend $\mathcal P$ to $\mathcal Q$ by applying Claim \ref{clm:extend} $\frac{s-\ell}{k-\ell}$ times. Then
$|V(\mathcal{Q})|= 2k- \ell + s- \ell\le 2k \e_2 |B|$ because of \eqref{eq:s2}. The rest is the same as in the previous case.
\end{proof}

\begin{claim}
The $A_1, B_1$ and $L_0, L_1$ defined in Claim~\ref{clm:path} satisfy the following properties:
\begin{enumerate}
\item $|B_1|\ge (1- \e_1) |B|$,
\item for any vertex $v\in A_1$, $\overline{\deg}(v, B_1)<3 \e_1 \binom{|B_1|}{k-1}$,
\item for any vertex $v\in B_1$, $\overline{\deg}(v, A_1 B_1^{k-1})\le 3k\e_1 \binom{|B_1|}{k-1}$,
\item $\overline{\deg}(L_0, A_1B_1^{k-1}) \le 5k\e_1\binom{|B_1|}{k-\ell}$, $\overline{\deg}(L_1, A_1B_1^{k-1}) \le 5k\e_1\binom{|B_1|}{k-\ell}$.
\end{enumerate}
\end{claim}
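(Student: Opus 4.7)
The plan is to verify each of the four bounds using the size estimates $|A\triangle A'|, |B\triangle B'| \le \e_2|B|$ (Claim~\ref{clm:size}) and $|V(\mathcal Q)| \le 10k\e_2|B|$ (Claim~\ref{clm:path}), together with the definitions of $A'$, $B'$ and of typical $\ell$-sets. Recall $B_1 \subseteq B'$ (since $L_0, L_1 \subset B'$), $A_1 \subseteq A'$, $A'\cap B'=\emptyset$, and $\e_2 = 2\e_1^2 \ll \e_1$. Statement (1) is immediate from $|B_1| \ge |B'|-|V(\mathcal Q)| \ge (1-(10k+1)\e_2)|B|$, which exceeds $(1-\e_1)|B|$ and also gives $|B| \le (1+O(k\e_1))|B_1|$—a conversion from $\binom{|B|}{k-1}$ to $\binom{|B_1|}{k-1}$ I will use throughout. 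For (2), I would fix $v \in A_1 \subseteq A'$ and split the $(k-1)$-subsets $S \subseteq B_1$ into those contained in $B$ and those meeting $B_1\setminus B \subseteq B'\setminus B$: by the definition of $A'$ the first class contributes at most $\overline{\deg}(v,B) \le \e_1\binom{|B|}{k-1}$ non-edges, and the second at most $|B_1\setminus B|\binom{|B_1|-1}{k-2} \le \e_2|B|\binom{|B_1|-1}{k-2}$ sets; converting via (1) yields $\overline{\deg}(v,B_1) \le (\e_1 + O(k\e_1^2))\binom{|B_1|}{k-1} < 3\e_1\binom{|B_1|}{k-1}$.

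Item (3) will be the main obstacle, because for $v \in B_1 \subseteq B'$ the definition of $B'$ only bounds non-edges inside $B$, not those reaching into $A$. My plan is to deploy the codegree condition \eqref{eqdeg2} in aggregate: summing $\deg(\{v\}\cup T) \ge n/(2(k-\ell))$ over all $(k-2)$-subsets $T \subseteq B_1\setminus\{v\}$ and regrouping the right-hand side according to whether the extending vertex lies in $A_1$, in $B_1\setminus\{v\}$, or in $V\setminus(A_1\cup B_1)$, one obtains
\[
\binom{|B_1|-1}{k-2}\cdot\frac{n}{2(k-\ell)} \le \deg(v, A_1B_1^{k-1}) + (k-1)\deg(v, B_1^{k-1}) + \sum_T\deg(\{v\}\cup T, V\setminus(A_1\cup B_1)).
\]
Because $V_0 \subseteq V(\mathcal Q)$, the complement $V\setminus(A_1\cup B_1)$ equals $V(\mathcal Q)\setminus(L_0\cup L_1)$ and has size at most $10k\e_2|B|$, so the last term is negligible; since $v\in B'$, \eqref{eq:analeft} controls the middle term by $2(k-1)\e_1\binom{|B|}{k-1}$. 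Rearranging, using $|A_1| \le n/(2(k-\ell)) + 1 + \e_2|B|$, and again converting to $\binom{|B_1|}{k-1}$ gives $\overline{\deg}(v, A_1B_1^{k-1}) \le (2(k-1) + O(k^2\e_1))\e_1\binom{|B_1|}{k-1} < 3k\e_1\binom{|B_1|}{k-1}$.

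For (4), the typical ends $L_0, L_1 \subset B'$ satisfy $\overline{\deg}(L_i, A'B'^{k-1}) \le 4k\e_1\binom{|B'|-\ell}{k-\ell-1}|A'|$ by Claim~\ref{fact:typ_l}, and this bound transfers directly to $\overline{\deg}(L_i, A_1B_1^{k-1})$ since $A_1\subseteq A'$ and $B_1\subseteq B'$ make the latter non-edges a subfamily of the former. The conversion to the target shape uses $|A'| \approx n/(2(k-\ell))$ and $|B_1| \approx (2k-2\ell-1)|A'|$ to produce the factor $\frac{k-\ell}{2k-2\ell-1}$, which is strictly less than $1$ whenever $\ell < k/2$; together with the $1+O(\e_1)$ error from (1) this concludes $\overline{\deg}(L_i, A_1B_1^{k-1}) \le 5k\e_1\binom{|B_1|}{k-\ell}$.
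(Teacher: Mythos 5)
Your proposal is correct and follows essentially the same route as the paper: parts (1), (2) and (4) are verbatim the paper's argument (definition of $A'$ plus $|B_1\setminus B|\le\e_2|B|$, and Claim~\ref{fact:typ_l} with the monotonicity $A_1\subseteq A'$, $B_1\subseteq B'$ and the conversion factor $\tfrac{k-\ell}{2k-2\ell-1}<1$). For part (3) the paper performs the same codegree double count, only summing over $(k-2)$-subsets of $B'\setminus\{v\}$ (splitting the extending vertex into $A'$, $B'$, $V_0$ and then restricting to $A_1,B_1$ by monotonicity) rather than over $B_1\setminus\{v\}$ directly as you do; this is a cosmetic difference and your constants check out.
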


\begin{proof}
Part (1): By Claim \ref{clm:size}, we have $|B_1\setminus B|\le |B'\setminus B|\le \e_2 |B|$. Furthermore,
\[
|B_1|\ge |B'| - |V(\mathcal Q)| \ge |B| - \e_2 |B|- 10k \e_2 |B| \ge (1-\e_1)|B|.
\]

Part (2): For a vertex $v\in A_1$, since $\overline{\deg}(v, B)\le \e_1 \binom{|B|}{k-1}$, we have
\begin{align*}
\overline{\deg}(v, B_1) &\le \overline{\deg}(v, B) + |B_1\setminus B| \binom{|B_1|-1}{k-2} \\
&\le \e_1 \binom{|B|}{k-1} + \e_2 |B| \binom{|B_1|-1}{k-2} \\
&< \e_1 \binom{|B|}{k-1} + \e_1 \binom{|B_1|}{k-1} <3 \e_1 \binom{|B_1|}{k-1},
\end{align*}
where the last inequality follows from Part~(1).

Part~(3): Consider the sum $\sum\deg(S\cup \{v\})$ taken over all $S\in \binom{B'\setminus \{v\}}{k-2}$. Since $\delta_{k-1}(\h)\ge |A|$, we have $\sum\deg(S\cup \{v\})\ge \binom{|B'|-1}{k-2}|A|$. On the other hand,
\begin{align*}
\sum\deg(S\cup \{v\}) &= \deg(v, A' B'^{k-1}) + \deg(v, V_0 B'^{k-1}) + (k-1) \deg(v, B').
\end{align*}
We thus derive that
\[
\deg(v, A' B'^{k-1}) \ge \binom{|B'|-1}{k-2}|A| -\deg(v, V_0 B'^{k-1}) - (k-1) \deg(v, B').
\]
By Claim~\ref{clm:size} and \eqref{eq:analeft}, it follows that
\begin{align*}
\deg(v, A' B'^{k-1}) &\ge \binom{|B'|-1}{k-2}(|A'|-\e_2|B|) - 2\e_2 |B| \binom{|B'|-1}{k-2} - 2(k-1)\e_1 \binom{|B|}{k-1} \\
&\ge \binom{|B'|-1}{k-2}|A'| - 2k\e_1 \binom{|B|}{k-1}.
\end{align*}
By Part~(1), we now have
\[
\overline{\deg}(v, A_1 B_1^{k-1})\le \overline{\deg}(v, A' B'^{k-1})\le 2k\e_1 \binom{|B|}{k-1}\le 3k\e_1 \binom{|B_1|}{k-1}.
\]

Part (4): By Claim~\ref{fact:typ_l}, for any typical $L\subseteq B'$, we have $\overline\deg(L, A' B'^{k-1})\le 4k\e_1\binom{|B'|-\ell}{k-\ell-1} |A'|$. Thus,
\[
\overline{\deg}(L_0, A_1B_1^{k-1})\le \overline\deg(L_0, A' B'^{k-1})\le 4k\e_1\binom{|B'|-\ell}{k-\ell-1} |A'| \le 5k\e_1\binom{|B_1|}{k-\ell},
\]
where the last inequality holds because $|B'|\le |B_1| +|V(\Q)| \le (1+ \e_1) |B_1|$. The same holds for $L_1$.
\end{proof}

\subsection{Completing the Hamilton cycle}

We finally complete the proof of Theorem \ref{lemE} by applying the following lemma with $X=A_1$, $Y=B_1$, $\rho=5k\e_1$, and $L_0, L_1$.

\begin{lemma}\label{lem:H}
Fix $1\le \ell < k/2$. Let $0<\rho\ll 1$ and $n$ be sufficiently large.
Suppose that $\h$ is a $k$-graph with a partition $V(\h)=X\cup Y$ and the following properties:
\begin{itemize}
\item $|Y| = (2k - 2\ell -1)|X| + \ell$,
\item for every vertex $v\in X$, $\overline \deg(v, Y)\le \rho\binom{|Y|}{k-1}$ and for every vertex $v\in Y$, $\overline \deg (v, XY^{k-1})\le \rho \binom{|Y|}{k-1}$,
\item there are two disjoint $\ell$-sets $L_0, L_1\subset Y$ such that
\begin{equation}\label{eq:L01}
\overline{\deg}(L_0, X Y^{k-1}), \,\overline{\deg}(L_1, X Y^{k-1})\le \rho\binom{|Y|}{k-\ell}.
\end{equation}
\end{itemize}
Then $\h$ contains a Hamilton $\ell$-path with $L_0$ and $L_1$ as ends.
\end{lemma}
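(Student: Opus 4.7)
Since $\ell < k/2$ and $|X|+|Y| = 2(k-\ell)|X|+\ell$, any Hamilton $\ell$-path in $\h$ must use exactly $q := 2|X|$ edges, and non-consecutive edges are disjoint (as $2\ell < k$), so each vertex lies in at most two edges. The plan is to construct a Hamilton $\ell$-path with the following structure: each $x_j \in X$ lies in two consecutive edges $e_{2j-1}, e_{2j}$ and occupies their shared $\ell$-overlap (the other $\ell-1$ vertices of this overlap lying in $Y$), while the overlap $e_{2j} \cap e_{2j+1}$ is an $\ell$-subset of $Y$. This arrangement is essentially forced once we demand each $x \in X$ to appear in two edges, since consecutive overlaps share an edge (which carries only one $X$-vertex), so the $X$-overlaps and $Y$-overlaps must alternate. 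The path then decomposes into $|X|$ blocks $B_1, \dots, B_{|X|}$, each block $B_j$ being a copy of $\Y_{k,\ell}$ spanning the single $X$-vertex $x_j$ together with $2k-\ell-1$ vertices of $Y$; consecutive blocks are glued by $\ell$-sets $M_j \subseteq Y$, and the whole path begins with $L_0$ and ends with $L_1$.

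I would fix an arbitrary ordering $x_1, \dots, x_{|X|}$ of $X$ and grow two sub-paths in parallel: one forward from $L_0$ using $x_1, x_2, \dots$, the other backward from $L_1$ using $x_{|X|}, x_{|X|-1}, \dots$. At each greedy step, given the current typical $\ell$-set endpoint $N \subseteq Y$ and a prescribed $x_j$, I would first find an edge $e$ of $\h$ containing $N \cup \{x_j\}$ (using the typicality bound $\overline{\deg}(N, XY^{k-1}) \le \rho\binom{|Y|}{k-\ell}$ and the fact that the unused portion of $Y$ still dominates $Y$), then apply Fact~\ref{fact:Y} inside the link graph of $x_j$ on the unused $Y$-vertices to locate a copy of $\Y_{k-1,\ell-1}$ through a chosen $(\ell-1)$-subset of $e \cap Y$, giving the second edge $e'$ of the block, and finally pick a new typical $N' \subset (e' \setminus e) \cap Y$ as the new endpoint. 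The last step is possible because the atypical $\ell$-subsets of $Y$ are only an $O(\rho)$ fraction of all $\ell$-subsets (analogous to Claim~\ref{fact:ltypical}). Each such step uses one vertex of $X$ and $2k-\ell-1$ vertices of $Y$.

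The main obstacle is the final ``meeting'' step: once the two sub-paths have been built, the vertex count forces the remaining pool to be exactly one $X$-vertex $x^*$ together with exactly $2k-3\ell-1$ vertices of $Y$, and these must be used in a single block joining the two typical endpoints produced on either side. With no slack in the counts, the left-over $Y$-vertices are fully determined by earlier greedy choices, and the joining block must form two valid edges on precisely this prescribed vertex set. My resolution is to reserve a small constant number $c$ of spare $X$-vertices and a matching small pool of $Y$-vertices for an endgame stage, stopping each parallel sub-path $c$ blocks short of the midpoint. The endgame then becomes the construction of a short $\ell$-path of $2c$ edges joining two typical $\ell$-sets through a prescribed small vertex set, which can be completed by iterating a Claim~\ref{clm:conn}-style connection argument: the reserved $X$-vertices and the two typical ends each have at most a $\rho$-fraction of non-edges in $XY^{k-1}$, so a routine inclusion-exclusion count rules out the $O(1)$ potential obstructions and produces the desired closing path on the exact remaining vertex set.
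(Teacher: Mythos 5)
Your structural analysis of what a Hamilton $\ell$-path must look like (blocks $L_i R_i S_i x_i R_i' L_{i+1}$, each $X$-vertex sitting in the overlap of its two edges together with $\ell-1$ vertices of $Y$, consecutive blocks glued by $\ell$-sets of $Y$) is correct and matches the paper. The gap is exactly at the point you flag: the closing step. Your proposed fix --- reserving a constant number $c$ of $X$-vertices and a small pool of $Y$-vertices and finishing with ``a Claim~\ref{clm:conn}-style connection argument'' plus ``routine inclusion-exclusion'' --- does not work, because the endgame has to realize blocks on a \emph{prescribed} vertex set of constant size, and the hypotheses are only fraction-type bounds: a vertex of $X$ may have up to $\rho\binom{|Y|}{k-1}$ non-neighboring $(k-1)$-subsets of $Y$, which is vastly larger than the $O(1)$ many $(k-1)$-subsets of the endgame pool, so \emph{every} candidate edge inside that pool could be a non-edge and no counting over $O(1)$ obstructions can rule this out. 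Claim~\ref{clm:conn} works precisely because it chooses its connecting vertices freely from a linear-sized set; once the vertex set is forced, that argument has no content. Nor does reserving the pool in advance help: then the greedy stage must exactly exhaust $Y$ minus the pool, so its final steps again lose all freedom and the same problem reappears one layer down. In short, exact cover of $Y$ cannot be achieved by a purely local/greedy procedure driven by these degree conditions.

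The paper resolves this by decoupling the two tasks rather than interleaving them. It first builds the entire skeleton on $Y$: it defines an auxiliary $(k-1)$-graph $\G$ on $Y$ whose edges are the $(k-1)$-sets with $\h$-degree at least $(1-\sqrt{\rho})|X|$ into $X$, shows $\delta_1(\G)\ge(1-3\sqrt\rho)\binom{|Y_1|-1}{k-2}$, and invokes the Glebov--Person--Weps theorem (Theorem~\ref{thm:GPW}) to get a \emph{tight} Hamilton path in $\G$ with prescribed typical ends hooking onto $L_0$ and $L_1$; this is the global tool that covers all of $Y$ exactly, which no greedy argument supplies. Only afterwards are the $X$-vertices inserted, via a perfect matching (Hall's theorem) between $X$ and the slots of the skeleton; the freedom you lose at the end of a greedy construction is recovered here as freedom in which $x$ goes to which slot, and the few $X$-vertices with low slot-degree are handled by a randomly pre-selected family $\F'$ of disjoint ``good'' $(2k-\ell-1)$-sets, each suitable for almost every $x\in X$. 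If you want to salvage your two-ended greedy scheme you would need an absorption-type device playing the role of $\F'$ plus a result guaranteeing an exact cover of the residual $Y$-vertices; as written, the endgame claim is the missing (and essential) idea.
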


In order to prove Lemma \ref{lem:H}, we apply two results of Glebov, Person, and Weps \cite{GPW}. Given $1\le j \le k-1$ and $0\le \rho\le 1$, an ordered set $(x_1,\dots, x_{j})$ is $\rho$-\emph{typical} in a $k$-graph $\G$ if for every $i\in [j]$,
\[
\overline\deg_{\G}( \{x_1, \dots, x_i\} ) \le \rho^{k-i}\binom {|V(\G)| - i}{k-i}.
\]
It was shown in \cite{GPW} that every $k$-graph $\G$ with very large minimum vertex degree contains a tight Hamilton cycle. The proof of \cite[Theorem 2]{GPW} actually shows that we can obtain a tight Hamilton cycle
by extending any fixed tight path of constant length with two typical ends. This implies the following theorem that we will use.

\begin{theorem}\cite{GPW}
\label{thm:GPW}
Given $1\le j\le k$ and $0<\a \ll 1$, there exists an $m_0$ such that the following holds. Suppose that $\G$ is a $k$-graph on $V$ with $|V|= m\ge m_0$ and $\delta_1(\G)\ge (1 - \a)\binom{m-1}{k-1}$. Then given any two disjoint $(22\a)^{\frac1{k-1}}$-typical ordered $j$-sets $(x_1,\dots, x_{j})$ and $(y_1,\dots, y_{j})$, there exists a tight Hamilton path $\mathcal P= x_{j}x_{j-1}\cdots x_1 \cdots \cdots y_1y_2\cdots y_{j}$ in $\G$.
\end{theorem}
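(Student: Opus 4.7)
The plan is a standard absorbing-method argument adapted to the prescribed typical endpoints. From the vertex-degree hypothesis $\delta_1(\G) \ge (1-\alpha)\binom{m-1}{k-1}$ one first extracts, by averaging, that $\overline{\deg}(S) \le k\alpha \binom{m-|S|}{k-|S|}$ for every set $S$ with $|S|\le k-1$, and that for all but a $\sqrt{\alpha}$-fraction of $(k-1)$-sets the codegree is at least $(1-\sqrt{\alpha})(m-k+1)$. Consequently, long tight paths can be extended greedily one vertex at a time.

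The first main step is to build, inside $V\setminus\{x_1,\dots,x_j,y_1,\dots,y_j\}$, an absorbing tight path $\mathcal{A}$ of length $\Theta(m)$ that can absorb any $U\subset V\setminus V(\mathcal{A})$ with $|U|\le \eta m$ while preserving its two endpoints. The standard absorbing gadget for a single vertex $v$ is a short tight path of $2k-1$ vertices with two ``traversals'', one using $v$ and one not, having the same endpoints; their existence is immediate from the extremely high codegree. A random selection followed by deletion concatenates them into $\mathcal{A}$. Second, I would set aside a reservoir $R$ with $|R|=\Theta(\alpha^{1/3}m)$ such that every $(k-1)$-set in $V$ has many neighbors in $R$; this also follows from the codegree condition.

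Third, starting from the prescribed end, I would grow a tight path $\mathcal{P}_x$ beginning $x_j x_{j-1}\cdots x_1$. After placing the prescribed vertices, the next vertex $z_1$ must satisfy that $\{x_1,\dots,x_{\min(k-1,j)}\}\cup\{z_1\}$ (filled in with earlier $z_i$'s if $j<k-1$) forms part of an edge; the $\rho$-typicality bound $\overline{\deg}(\{x_1,\dots,x_i\}) \le \rho^{k-i}\binom{m-i}{k-i}$ is tailored precisely so that such a $z_1$ exists in $V\setminus(R\cup V(\mathcal{A}))$ and moreover makes the new $(k-1)$-suffix of the path typical for the next extension. After $O(1)$ extensions the suffix of $\mathcal{P}_x$ becomes a generic $(k-1)$-set with codegree at least $(1-\sqrt{\alpha})(m-k+1)$, and we enter the ``high codegree'' regime. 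The analogous construction from the $y$-side yields a path $\mathcal{P}_y$ that terminates in $y_1 y_2 \cdots y_j$. Then cover almost all of $V\setminus(V(\mathcal{A})\cup R\cup V(\mathcal{P}_x)\cup V(\mathcal{P}_y))$ with a constant-bounded number of tight paths via greedy extension, connect all of these $O(1)$ pieces through $R$, and finally absorb the remaining uncovered vertices into $\mathcal{A}$.

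The hard part is the initial extension from the typical ends: we must simultaneously respect the rapidly decaying typicality bounds $\rho^{k-i}$, avoid $R\cup V(\mathcal{A})$, and ensure that each new $(k-1)$-suffix has enough codegree for the next step. The choice $\rho=(22\alpha)^{1/(k-1)}$ is dictated by the requirement that after $k-1$ successive extensions the accumulated error in the non-codegree of the current end be comparable to the raw bound $\alpha\binom{m-k+1}{1}$ coming from the vertex-degree hypothesis; this is precisely what allows the ``typical'' regime near the prescribed ends to merge cleanly with the ``generic'' high-codegree regime that governs the rest of the path.
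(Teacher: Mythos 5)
Your opening reduction is false, and the rest of the argument leans on it. The hypothesis $\delta_1(\G)\ge (1-\a)\binom{m-1}{k-1}$ does \emph{not} give $\overline{\deg}(S)\le k\a\binom{m-|S|}{k-|S|}$ for every $S$ with $2\le |S|\le k-1$: a $(k-1)$-set can have codegree zero while every vertex still has almost full degree, since the $m-k+1$ non-edges this creates are negligible against $\a\binom{m-1}{k-1}$. Only the averaged statement you also record (all but a $\sqrt{\a}$-fraction of $(k-1)$-sets have codegree at least $(1-\sqrt{\a})(m-k+1)$) is available. But your later steps use the universal bound: a reservoir $R$ in which \emph{every} $(k-1)$-set has many neighbors cannot exist (a codegree-zero set has no neighbors at all); the absorbing gadgets are not ``immediate from the extremely high codegree'' because there is no codegree condition; and greedy one-vertex extension can get stuck, since nothing guarantees that the $(k-1)$-suffix produced at a given step is one of the good sets --- ``after $O(1)$ extensions we enter the high codegree regime'' is exactly the point that needs proof, at every step, not just near the prescribed ends. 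So all three pillars of your absorption scheme (absorber, reservoir, cover-and-connect) are unsupported as written.

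For comparison, the paper does not prove this theorem at all: it is quoted from Glebov--Person--Weps, with the observation that their proof of a tight Hamilton cycle under near-complete vertex degree in fact produces a tight Hamilton path between any two prescribed typical ordered $j$-sets. The idea your proposal is missing, and which is the heart of that proof, is that typicality of the path ends must be propagated throughout the construction: one shows that from a typical $(k-1)$-tuple all but a small proportion of extensions yield again a typical tuple, and all connections and absorbing structures are built exclusively through typical tuples (chosen, e.g., randomly, as in Lemma~\ref{lem:GPW}). An absorption-style proof can be salvaged along your lines, but only after (i) redefining absorbers and connectors so that every internal $(k-1)$-tuple is typical, (ii) requiring the reservoir property only for typical $(k-1)$-sets, and (iii) running the covering stage so that the ends of all partial paths remain typical at every step; each of these needs the counting argument that your sketch replaces by an appeal to a nonexistent codegree condition.
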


We also use \cite[Lemma 3]{GPW}, in which $V^{2k-2}$ denotes the set of all $(2k-2)$-tuples $(v_1, \dots, v_{2k-2})$ such that $v_i\in V$ ($v_i$'s are not necessarily distinct).

\begin{lemma}\cite{GPW}
\label{lem:GPW}
Let $\G$ be the $k$-graph given in Lemma \ref{thm:GPW}. Suppose that $(x_1, \dots, x_{2k-2})$ is selected uniformly at random from $V^{2k-2}$. Then the probability that all $x_i$'s are pairwise distinct and $(x_1, \dots, x_{k-1}), (x_{k}, \dots, x_{2k-2})$ are $(22\a)^{\frac1{k-1}}$-typical is at least $\frac{8}{11}$.
\end{lemma}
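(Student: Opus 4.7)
The plan is a straightforward union bound on three failure events. Writing $\rho := (22\alpha)^{1/(k-1)}$ so that $\alpha = \rho^{k-1}/22$, let
\begin{align*}
A &= \{\text{some two of } x_1,\ldots,x_{2k-2} \text{ coincide}\}, \\
B &= \{(x_1,\ldots,x_{k-1}) \text{ is not } \rho\text{-typical}\}, \\
C &= \{(x_k,\ldots,x_{2k-2}) \text{ is not } \rho\text{-typical}\}.
\end{align*}
I would bound each in turn and then combine.

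For $A$, since each $x_i$ is an independent uniform sample from $V$, a standard calculation gives $\Pr[A] \le \binom{2k-2}{2}/m$, which is $o(1)$ for $m$ large.

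For $B$ (and symmetrically $C$), I split by the level $i \in [k-1]$ at which typicality fails. At $i=1$ the required bound $\overline{\deg}_{\G}(x_1) \le \rho^{k-1}\binom{m-1}{k-1} = 22\alpha\binom{m-1}{k-1}$ is automatic from $\delta_1(\G) \ge (1-\alpha)\binom{m-1}{k-1}$. For $i \ge 2$, a first-moment count on $i$-subsets of $V$ suffices: since $\overline{e}(\G) \le \alpha\binom{m}{k}$ (by summing the per-vertex deficit and dividing by $k$) and $\binom{k}{i}\binom{m}{k} = \binom{m}{i}\binom{m-i}{k-i}$, I get
\[
\#\Bigl\{ S \in \tbinom{V}{i} : \overline{\deg}_{\G}(S) > \rho^{k-i}\tbinom{m-i}{k-i}\Bigr\} \;\le\; \frac{\binom{k}{i}\,\overline{e}(\G)}{\rho^{k-i}\binom{m-i}{k-i}} \;\le\; \frac{\alpha}{\rho^{k-i}}\binom{m}{i}.
\]
Hence the probability that $(x_1,\ldots,x_i)$ (drawn uniformly from $V^i$) has distinct entries whose underlying set is bad at level $i$ is at most $i!\,\alpha\binom{m}{i}/(m^i\rho^{k-i}) \le \alpha/\rho^{k-i} = \rho^{i-1}/22$.

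Summing over $i=2,\ldots,k-1$ and adding the $A$ contribution,
\[
\Pr[A\cup B\cup C] \;\le\; \frac{\binom{2k-2}{2}}{m} + \frac{2}{22}\sum_{i=2}^{k-1}\rho^{i-1} \;\le\; o(1) + \frac{\rho}{11(1-\rho)}.
\]
Since $\alpha \ll 1$ forces $\rho$ small (so the geometric tail is bounded by a small constant), this is at most $3/11$ for $m \ge m_0$, giving success probability $\ge 8/11$.

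There is no real obstacle here; the only bookkeeping point that deserves care is that the Markov-type estimate counts $i$-sets rather than ordered $i$-tuples, so the contribution of ordered $i$-tuples with a repeated coordinate must either be absorbed into the event $A$ or bounded separately by the trivial $\binom{i}{2}/m = o(1)$. Once this is handled, the argument is a clean union bound plus the algebraic identity $\alpha/\rho^{k-i} = \rho^{i-1}/22$ built into the choice of $\rho$.
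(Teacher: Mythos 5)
Your argument is correct. Note that the paper does not prove this lemma at all --- it is quoted as \cite[Lemma 3]{GPW} --- so there is no in-paper proof to compare against; your first-moment/union-bound derivation (level $i=1$ handled directly by the minimum-degree hypothesis, levels $i\ge 2$ by Markov on $\sum_S \overline{\deg}_\G(S)=\binom{k}{i}\overline{e}(\G)\le \binom{k}{i}\a\binom{m}{k}$ together with the identity $\binom{k}{i}\binom{m}{k}=\binom{m}{i}\binom{m-i}{k-i}$, plus the $\binom{2k-2}{2}/m$ coincidence term) is the standard route to this statement and gives failure probability $o(1)+\frac{1}{11}\cdot\frac{\rho}{1-\rho}\le \frac{3}{11}$ for $\a\ll 1$, as required. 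Your handling of the ordered-versus-unordered bookkeeping (absorbing repeated coordinates into the coincidence event and using $i!\binom{m}{i}\le m^i$) is exactly the right fix, so the proof stands as a correct self-contained substitute for the cited result.
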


\begin{proof}[Proof of Lemma \ref{lem:H}]
In this proof we often write the union $A\cup B\cup \{x\}$ as $A B x$, where $A, B$ are sets and $x$ is an element.

Let $t= |X|$. Our goal is to write $X$ as $\{ x_1, \dots, x_{t} \}$ and partition $Y$ as $\{L_i, R_i, S_i, R_i': i\in [t] \}$ with $|L_i|=\ell$, $|R_i|=|R_i'|=k-2\ell$, and $|S_i|=\ell-1$ such that
\begin{equation}
\label{eq:xiH}
L_{i} R_{i} S_i x_i, \ S_i x_i R_{i}' L_{i+1}\in E(\h)
\end{equation}
for all $i\in [t]$, where $L_{t+1}=L_0$. Consequently
\[
L_1\, R_1\, S_1\, x_1\, R'_1\, L_2\, R_2\, S_2\, x_2\, R'_2\, \cdots \, L_{t}\, R_{t}\, S_t\, x_{t}\, R'_{t}\, L_{t+1}
\]
is the desired Hamilton $\ell$-path of $\h$.

Let $\G$ be the $(k-1)$-graph on $Y$ whose edges are all $(k-1)$-sets $S\subseteq Y$ such that $\deg_{\h}(S, X)> (1-\sqrt{\rho})t$. The following is an outline of our proof. We first find a small subset $Y_0\subset Y$ with a partition $\{L_i, R_i, S_i, R_i': i\in [t_0] \}$ such that for every $x\in X$, we have $L_{i} R_{i} S_i x, S_i x R_{i}' L_{i+1} \in E(\h)$ for many $i\in [t_0]$. Next we apply Theorem~\ref{thm:GPW} to $\G[Y\setminus Y_0]$ and obtain a tight Hamilton path, which, in particular, partitions $Y\setminus Y_0$ into $\{L_i, R_i, S_i, R_i': t_0< i\le t \}$ such that
$L_i R_i S_i, \ S_i R'_i L_{i+1} \in E(\G)$ for $t_0< i\le t$. Finally we apply the Marriage Theorem to find a perfect matching between $X$ and $[t]$ such that \eqref{eq:xiH} holds for all matched $x_i$ and $i$.

We now give details of the proof.  First we claim that
\begin{equation}\label{eq:dG}
\delta_1(\G) \ge (1- 2\sqrt{\rho}) \binom{|Y|-1}{k-2},
\end{equation}
and consequently,
\begin{equation}\label{eq:eG}
\overline{e}(\G)\le 2\sqrt{\rho} \binom{|Y|}{k-1}.
\end{equation}
Suppose instead, some vertex $v\in Y$ satisfies $\overline \deg_\G(v)> 2\sqrt{\rho} \binom{|Y|-1}{k-2}$. Since every non-neighbor $S'$ of $v$ in $\G$ satisfies $\overline \deg_\h(S' v, X)\ge \sqrt{\rho} t$, we have
$\overline \deg_{\h}(v, XY^{k-1}) > 2\sqrt{\rho} \binom{|Y|-1}{k-2} \sqrt{\rho} t$.
Since $|Y| = (2k- 2\ell -1)t + \ell$, we have
\[
\overline \deg_{\h}(v, XY^{k-1}) > 2\rho\frac{|Y|-\ell}{2k-2\ell-1} \binom{|Y|-1}{k-2}
> \rho \frac{|Y|}{k-1} \binom{|Y|-1}{k-2} = \rho \binom{|Y|}{k-1},
\]
contradicting our assumption (the second inequality holds because $|Y|$ is sufficiently large).

Let $Q$ be a $(2k-\ell-1)$-subset of $Y$.  We call $Q$  \emph{good} (otherwise \emph{bad}) if every $(k-1)$-subset of $Q$ is an edge of $\G$ and every $\ell$-set $L\subset Q$ satisfies
\begin{equation}
\label{eq:degGL}
\overline\deg_\G(L) \le \rho^{1/4}\binom{|Y|-\ell}{k-\ell-1}.
\end{equation}
Furthermore, we say $Q$ is \emph{suitable} for a vertex $x\in X$ if $x\cup T\in E(\h)$ for every $(k-1)$-set $T\subset Q$. Note that if a $(2k-\ell-1)$-set is good, by the definition of $\G$, it is suitable for at least $(1- \binom{2k- \ell-1}{k-1}\sqrt{\rho})t$ vertices of $X$. Let $Y'= Y\setminus (L_0\cup L_1)$.

\begin{claim}\label{clm:suitablesets}
For any $x\in X$, at least $(1-\rho^{1/5} )\binom{|Y|}{2k-\ell-1}$ $(2k-\ell-1)$-subsets of $Y'$ are good and suitable for $x$.
\end{claim}

\begin{proof}
Since $\rho + \rho^{1/2} + 3\binom{2k-\ell-1}{\ell} \rho^{1/4}\le \rho^{1/5}$, the claim follows from the following three assertions:
\begin{itemize}
\item At most $2\ell \binom{|Y|-1}{2k - \ell -2}\le \rho \binom{|Y|}{2k- \ell -1}$ $(2k-\ell-1)$-subsets of $Y$ are not subsets of $Y'$.
\item Given $x\in X$, at most $\rho^{1/2} \binom{|Y|}{2k-\ell-1}$ $(2k-\ell-1)$-sets in $Y$ are not suitable for $x$.
\item At most $3\binom{2k-\ell-1}{\ell} \rho^{1/4} \binom{|Y|}{2k-\ell-1}$ $(2k-\ell-1)$-sets in $Y$ are bad.
\end{itemize}

The first assertion holds because $|Y\setminus Y'|= 2\ell$. The second assertion follows from the degree condition of $\h$, namely, for any $x\in X$, the number of $(2k-\ell-1)$-sets in $Y$ that are not suitable for $x$ is at most ${\rho}\binom{|Y|}{k-1}\binom{|Y|-k+1}{k-\ell}\le \sqrt{\rho}\binom{|Y|}{2k-\ell-1}$.

To see the third one, let $m$ be the number of $\ell$-sets $L\subseteq Y$ that fail \eqref{eq:degGL}.
By \eqref{eq:eG},
\[
m \frac{\rho^{1/4}\binom{|Y|-\ell}{k-\ell-1} }{ \binom{k-1}{\ell} }\le  \overline{e}(\G)\le 2\sqrt{\rho} \binom{|Y|}{k-1},
\]
which implies that $m\le 2\rho^{1/4} \binom{|Y|}{\ell}$.
Thus at most
\[
2\rho^{1/4}\binom{|Y|}{\ell}\cdot \binom{|Y| - \ell}{2k-2\ell-1}
\]
$(2k - \ell -1)$-subsets of $Y$ contain an $\ell$-set $L$ that fails \eqref{eq:degGL}.
On the other hand, by \eqref{eq:eG}, at most
\[
\overline{e}(\G) \binom{ |Y|-k+1 }{k- \ell} \le 2\sqrt{\rho} \binom{|Y|}{k-1} \binom{ |Y|-k+1 }{k- \ell}
\]
$(2k - \ell -1)$-subsets of $Y$ contain a non-edge of $\G$. Putting these together, the number of bad $(2k-\ell-1)$-sets in $Y$ is at most
\[
2\rho^{1/4}\binom{|Y|}{\ell} \binom{|Y|-\ell}{2k-2\ell-1} + 2\sqrt{\rho} \binom{|Y|}{k-1} \binom{ |Y|-k+1 }{k- \ell}  \le 3\binom{2k-\ell-1}{\ell}\rho^{1/4} \binom{|Y|}{2k-\ell-1},
\]
as $\rho\ll 1$.
\end{proof}

Let $\F_0$ be the set of good $(2k-\ell-1)$-sets in $Y'$.
We will pick a family of disjoint good $(2k-\ell-1)$-sets in $Y'$ such that for any $x\in X$, many members of this family are suitable for $x$. To achieve this, we pick a family $\mathcal F$ by selecting each member of $\F_0$ randomly and independently with probability $p=6\sqrt{\rho} |Y|/\binom{|Y|}{2k-\ell-1}$.
Then $|\F|$ follows the binomial distribution $B(|\F_0|, p)$ with expectation $\mathbb{E}(|\F|) = p |\F_0| \le p\binom{|Y|}{2k - \ell - 1}$.
Furthermore, for every $x\in X$, let $f(x)$ denote the number of members of $\F$ that are suitable for $x$. Then $f(x)$ follows the binomial distribution $B(N, p)$ with $N\ge (1-\rho^{1/5} )\binom{|Y|}{2k-\ell-1}$ by Claim \ref{clm:suitablesets}. Hence $\mathbb{E}(f(x))\ge p(1-\rho^{1/5} )\binom{|Y|}{2k-\ell-1}$. 
Since there are at most $\binom{|Y|}{2k-\ell-1}\cdot (2k-\ell-1)\cdot \binom{|Y|-1}{2k-\ell-2}$ pairs of intersecting $(2k-\ell-1)$-sets in $Y$,
the expected number of intersecting pairs of $(2k-\ell-1)$-sets in $\mathcal F$ is at most
\[
p^2 \binom{|Y|}{2k-\ell-1}\cdot (2k-\ell-1)\cdot \binom{|Y|-1}{2k-\ell-2}=36(2k-\ell-1)^2 {\rho} |Y|.
\]

By Chernoff's bound (the first two properties) and Markov's bound (the last one), we can find a family $\mathcal F$ of good $(2k-\ell-1)$-subsets of $Y'$ that satisfies
\begin{itemize}
\item
$|\mathcal F|\le 2p \binom{|Y'|}{2k-\ell-1} \le 12\sqrt{\rho} |Y|$,
\item
for any vertex $x\in X$, at least
$\frac{p}2 (1- \rho^{1/5})\binom{|Y|}{2k-\ell-1}  \ge 2\sqrt{\rho}|Y|$
members of $\mathcal F$ are suitable for $x$.
\item
the number of intersecting pairs of $(2k-\ell-1)$-sets in $\mathcal F$ is at most $72(2k-\ell-1)^2 {\rho} |Y|$.
\end{itemize}
After deleting one $(2k-\ell-1)$-set from each of the intersecting pairs from $\F$, we obtain a family $\mathcal F'\subseteq \F$ consisting of at most $12\sqrt{\rho} |Y|$ disjoint good $(2k-\ell-1)$-subsets of $Y'$ and for each $x\in X$, at least
\begin{equation}\label{eq:F'}
2\sqrt{\rho}|Y| - 72(2k-\ell-1)^2 {\rho} |Y|\ge \frac32\sqrt{\rho}|Y|
\end{equation}
members of $\mathcal F'$ are suitable for $x$.

Denote $\F'$ by $\{Q_2, Q_4, \dots, Q_{2q} \}$ for  some $q\le 12\sqrt{\rho} |Y|$. We arbitrarily partition each $Q_{2i}$ into $L_{2i} \cup P_{2i} \cup L_{2i+1}$ such that $|L_{2i}| = |L_{2i+1}|= \ell$ and $|P_{2i}| =2k - 3\ell -1$.
Since $Q_{2i}$ is good, both $L_{2i}$ and $L_{2i+1}$ satisfy \eqref{eq:degGL}. We claim that $L_0$ and $L_1$ satisfy \eqref{eq:degGL} as well. Let us show this for $L_0$. By the definition of $\G$, the number of $XY^{k-\ell-1}$-sets $T$ such that $T\cup L_0\not\in E(\h)$ is at least $\overline\deg_\G(L_0) \sqrt{\rho}t$. Using \eqref{eq:L01}, we derive that $
 \overline\deg_\G(L_0) \sqrt{\rho}t \le \rho \binom{|Y|}{k- \ell}$. Since $|Y|\le (2k- 2\ell)t$, it follows that
$\overline\deg_\G(L_0)\le 2\sqrt{\rho} \binom{|Y| -1}{k- \ell - 1}\le  {\rho}^{1/4} \binom{|Y| -\ell}{k- \ell - 1}$.

Next we greedily find disjoint $(2k- 3\ell -1)$-sets $P_1, P_3, \dots, P_{2q-1}$ from $Y'\setminus \bigcup_{i=1}^q Q_{2i}$ such that for each $i\in [q]$, every $(k- \ell -1)$-subset of $P_{2i-1}$ is a common neighbor of $L_{2i-1}$ and $L_{2i}$ in $\G$. 
Suppose that we have found $P_1, P_3, \dots, P_{2i-1}$ for some $i<q$. 
Since both $L_{2i-1}$ and $L_{2i}$ satisfy \eqref{eq:degGL}, at most
\[
2 \cdot {\rho}^{1/4}\binom{|Y|-\ell}{k-\ell-1} \binom{|Y| - k +1}{k- 2\ell}
\]
$(2k- 3\ell -1)$-subsets of $Y$ contain a non-neighbor of $L_{2i-1}$ or $L_{2i}$. 
Thus, the number of $(2k- 3\ell -1)$-sets that can be chosen as $P_{2i+1}$ is at least
\[
\binom{|Y'| - (2k- 2\ell-1)2q}{2k - 3\ell - 1} - 2 \cdot {\rho}^{1/4}\binom{|Y|-\ell}{k-\ell-1} \binom{|Y| - k +1}{k- 2\ell} > 0,
\]
as $q\le 12\sqrt{\rho} |Y|$ and $\rho\ll 1$.

Let $Y_1= Y' \setminus  \bigcup_{i=1}^q ( P_{2i-1} \cup Q_{2i}) $ and $\G'=\G[Y_1]$. Then $|Y_1|=|Y'|-(2k-2\ell-1)2q$. Since $\overline \deg_{\G'}(v)\le \overline \deg_\G(v)$ for every $v\in Y_1$, we have, by \eqref{eq:dG},
\[
\delta_1(\G') \ge \binom{|Y_1|-1}{k-2} -  2\sqrt{\rho} \binom{|Y|-1}{k-2}\ge (1-3\sqrt{\rho})\binom{|Y_1|-1}{k-2}.
\]

Let  $\a=3\sqrt{\rho}$ and $\rho_0=(22\a)^{\frac1{k-1}}$.
We want to find two disjoint $\rho_0$-typical ordered $(k-\ell-1)$-subsets $(x_1,\dots, x_{k-\ell-1})$ and $(y_1,\dots, y_{k-\ell-1})$ of $Y_1$ such that
\begin{equation}\label{eq:endG}
L_{2q+1} \cup \{x_1,\dots, x_{k-\ell-1}\}, \ L_0 \cup \{y_1,\dots, y_{k-\ell-1}\} \in E(\G).
\end{equation}
To achieve this, we choose $(x_1,\dots, x_{k-1}, y_1,\dots, y_{k-1})$ from ${Y_1}^{2k-2}$ uniformly at random. By Lemma \ref{lem:GPW}, with probability at least $\frac8{11}$,  $(x_1,\dots, x_{k-\ell-1})$ and $(y_1,\dots, y_{k-\ell-1})$ are two disjoint
ordered $\rho_0$-typical $(k-\ell-1)$-sets. Since $L_0$ satisfies \eqref{eq:degGL}, at most $(k-\ell -1)! \rho^{1/4} \binom{|Y|-\ell}{k-\ell-1}$ ordered $(k-\ell-1)$-subsets of $Y$ are not neighbors of $L_0$ (the same holds for $L_{2q+1}$). Thus  \eqref{eq:endG} fails with probability at most $2(k-\ell -1)! \rho^{1/4}$, provided that $x_1,\dots, x_{k-\ell-1}$, $y_1,\dots, y_{k-\ell-1}$ are all distinct. Therefore the desired $(x_1,\dots, x_{k-\ell-1})$ and $(y_1,\dots, y_{k-\ell-1})$ exist.

Next we apply Theorem \ref{thm:GPW} to $\G'$ and obtain a tight Hamilton path
\[
\mathcal{P}= x_{k-\ell-1}x_{k-\ell-2}\cdots x_1 \cdots \cdots y_1y_2\cdots y_{k-\ell-1}.
\]
Following the order of $\mathcal{P}$, we partition $Y_1$ into
\[
R_{2q+1}, S_{2q+1}, R_{2q+1}', L_{2q+2},\dots, L_t, R_{t}, S_{t}, R_{t}'
\]
such that $|L_i|=\ell$, $|R_i|=|R_i'|=k-2\ell$, and $|S_i|=\ell-1$. Since $\mathcal{P}$ is a tight path in $\G$, we have
\begin{equation}\label{eq:goodQ}
L_i R_i S_i, \ S_i R'_i L_{i+1} \in E(\G)
\end{equation}
for $2q+2\le i\le t-1$. Letting $L_{t+1}= L_0$, by \eqref{eq:endG}, we also have \eqref{eq:goodQ} for $i=2q+1$ and $i=t$.

We now arbitrarily partition $P_i$, $1\le i\le 2q$ into $R_i \cup S_i\cup R'_i$ such that $|R_i|=|R_i'|=k-2\ell$, and $|S_i|=\ell-1$. By the choice of $P_i$,  \eqref{eq:goodQ} holds for $1\le i \le 2q$.

Consider the bipartite graph $\Gamma$ between $X$ and $Z := \{z_1, z_2, \dots, z_{t}\}$ such that $x\in X$ and $z_i\in Z$ are adjacent if and only if $L_i R_i S_i x, x S_i R'_i L_{i+1}\in E(\h)$. For every $i\in [t]$, since \eqref{eq:goodQ} holds, we have $\deg_{\Gamma} (z_i) \ge (1 - 2\sqrt \rho)t$ by the definition of $\G$. Let $Z'= \{z_{2q+1}, \dots, z_{t}\}$ and $X_0$ be the set of $x\in X$ such that $\deg_{\Gamma}(x, Z')\le |Z'|/2$. Then
\[
|X_0| \frac{|Z'|}2 \le \sum_{x\in X} \overline{\deg}_{\Gamma}(x, Z') \le 2 \sqrt \rho t\cdot |Z'|,
\]
which implies that
$|X_0| \le 4 \sqrt\rho t = 4 \sqrt{\rho} \frac{|Y|-\ell}{2k-2\ell-1}\le \frac{4}{3} \sqrt\rho |Y|$ (note that $2k-2\ell-1\ge k\ge 3$).

We now find a perfect matching between $X$ and $Z$ as follows.
\begin{enumerate}
\item[Step 1:]
Each $x\in X_0$ is matched to some $z_{2i}$, $i\in [q]$ such that the corresponding $Q_{2i}\in \F'$ is suitable for $x$ (thus $x$ and $z_{2i}$ are adjacent in $\Gamma$) -- this is possible because of \eqref{eq:F'} and
$|X_0| \le\frac43 \sqrt\rho |Y|$.
\item[Step 2:]
Each of the unused $z_{i}$, $i\in [2q]$ is matched to a vertex in $X\setminus X_0$ -- this is possible because $\deg_{\Gamma} (z_i) \ge (1 - 2\sqrt \rho)t \ge |X_0| + 2q$.
\item[Step 3:]
Let $X'$ be the set of the remaining vertices in $X$. Then $|X'|= t - 2q = |Z'|$.
Now consider the induced subgraph $\Gamma'$ of $\Gamma$ on $X' \cup Z'$. Since $\delta(\Gamma')\ge |X'|/2$, the Marriage Theorem provides a perfect matching in $\Gamma'$.
\end{enumerate}
The perfect matching between $X$ and $Z$ gives rise to the desired Hamilton path of $\h$.
\end{proof}

\section{Concluding Remarks}

Let $h^{\ell}_d(k, n)$ denote the minimum integer $m$ such that every $k$-graph $\h$ on $n$ vertices with minimum $d$-degree $\delta_d(\h) \ge m$ contains a Hamilton $\ell$-cycle (provided that $k- \ell$ divides $n$). In this paper we determined $h^{\ell}_{k-1}(k, n)$ for all $\ell<k/2$ and sufficiently large $n$. Unfortunately our proof does not give $h^{\ell}_{k-1}(k, n)$ for all $k, \ell$ such that $k-\ell$ does not divide $k$ even though we believe that $h^{\ell}_{k-1}(k, n)= \frac{n}{\lceil \frac{k}{k-\ell} \rceil (k-\ell)}$. In fact, when $k-\ell$ does not divide $k$, if we can prove a path-cover lemma similar to Lemma~\ref{lemP}, then we can follow the proof in \cite{KMO} to solve the nonextremal case. When $\ell \ge k/2$, we cannot define $\Y_{k,2\ell}$ so the current proof of Lemma~\ref{lemP} fails. In addition, when $\ell \ge k/2$, the extremal case becomes complicated as well.

The situation is quite different when $k-\ell$ divides $k$.
When $k$ divides $n$, one can easily construct a $k$-graph $\h$ such that
$\delta_{k-1}(\h)\ge \frac{n}{2} - k$ and yet $\h$ contains no perfect matching and consequently no Hamilton $\ell$-cycle for any $\ell$ such that $k-\ell$ divides $k$. A construction in \cite{MaRu} actually shows that $h_{k-1}^{\ell}(k,n)\ge \frac n2-k$ whenever $k-\ell$ divides $k$, even when $k$ does not divide $n$.
The exact value of $h^{\ell}_d(k, n)$, 
when $k-\ell$ divides $k$, is not known except for $h_2^2(3,n)= \lfloor n/2 \rfloor$ given in \cite{RRS11}.
In the forthcoming paper \cite{HZk2}, we determine $h^{k/2}_d(k, n)$ exactly for even $k$ and any $d\ge k/2$.

Let $t_d(n, F)$ denote the minimum integer $m$ such that every $k$-graph $\h$ on $n$ vertices with minimum $d$-degree $\delta_d(\h) \ge m$ contains a perfect $F$-tiling.
One of the first results on hypergraph tiling was $t_2(n, \Y_{3,2})= n/4 + o(n)$ given by K\"uhn and Osthus \cite{KO}. The exact value of $t_2(n, \Y_{3,2})$ was determined recently by Czygrinow, DeBiasio, and Nagle \cite{CDN}. We \cite{HZ3} determined $t_1(n, \Y_{3,2})$ very recently.
The key lemma in our proof, Lemma~\ref{lem:F}, shows that every $k$-graph $\h$ on $n$ vertices with $\delta_{k-1}(\h)\ge  (\frac{1}{2k-b} - o(1)) n$ either contains an almost perfect $\Y_{k,b}$-tiling or is in the extremal case. Naturally this raises a question: what is $t_{k-1}(n, \Y_{k,b})$?
Mycroft \cite{My14} recently proved a general result on tiling $k$-partite $k$-graphs, which implies that $t_{k-1}(n, \Y_{k,b})= \frac{n}{2k-b} + o(n)$.
The lower bound comes from the following construction. Let $\h_0$ be the $k$-graph on $n\in (2k-b)\mathbb N$ vertices such that $V(\h_0)=A\cup B$ with $|A|=\frac{n}{2k-b} - 1$, and $E(\h_0)$ consists of all $k$-sets intersecting $A$ and some $k$-subsets of $B$ such that $\h_0[B]$ contains no copy of $\Y_{k,b}$.
Thus, $\delta_{k-1}(\h_0)\ge \frac{n}{2k-b}-1$. Since every copy of $\Y_{k,b}$ contains at least one vertex in $A$, there is no perfect $\Y_{k,b}$-tiling in $\h_0$.
We believe that one can find a matching upper bound by the absorbing method (similar to the proof in \cite{CDN}). In fact, since we already proved Lemma~\ref{lem:F}, it suffices to prove an absorbing lemma and the extremal case.

\section*{Acknowledgement}
We thank two referees for their valuable comments that improved the presentation of this paper.

\bibliographystyle{plain}
\bibliography{Jan2014}

\begin{thebibliography}{10}

\bibitem{BHS}
E.~Bu{\ss}, H.~H{\`a}n, and M.~Schacht.
\newblock Minimum vertex degree conditions for loose {H}amilton cycles in
  3-uniform hypergraphs.
\newblock {\em J. Combin. Theory Ser. B}, 103(6):658--678, 2013.

\bibitem{CDN}
A.~Czygrinow, L.~DeBiasio, and B.~Nagle.
\newblock Tiling 3-uniform hypergraphs with ${K}_4^3-2e$.
\newblock {\em Journal of Graph Theory}, 75(2):124--136, 2014.

\bibitem{CzMo}
A.~Czygrinow and T.~Molla.
\newblock Tight codegree condition for the existence of loose {H}amilton cycles
  in 3-graphs.
\newblock {\em SIAM J. Discrete Math.}, 28(1):67--76, 2014.

\bibitem{Dirac}
G.~A. Dirac.
\newblock Some theorems on abstract graphs.
\newblock {\em Proc. London Math. Soc. (3)}, 2:69--81, 1952.

\bibitem{EKR}
P.~Erd{\H{o}}s, C.~Ko, and R.~Rado.
\newblock Intersection theorems for systems of finite sets.
\newblock {\em Quart. J. Math. Oxford Ser. (2)}, 12:313--320, 1961.

\bibitem{GPW}
R.~Glebov, Y.~Person, and W.~Weps.
\newblock On extremal hypergraphs for {H}amiltonian cycles.
\newblock {\em European J. Combin.}, 33(4):544--555, 2012.

\bibitem{HS}
H.~H\`an and M.~Schacht.
\newblock Dirac-type results for loose {Hamilton} cycles in uniform
  hypergraphs.
\newblock {\em Journal of Combinatorial Theory. Series B}, 100:332--346, 2010.

\bibitem{HZk2}
J.~Han and Y.~Zhao.
\newblock Minimum degree conditions for {Hamilton} $(k/2)$-cycles in
  $k$-uniform hypergraphs.
\newblock {\em manuscript}.

\bibitem{HZ1}
J.~Han and Y.~Zhao.
\newblock Minimum degree thresholds for loose {Hamilton} cycle in 3-graphs.
\newblock {\em submitted}.

\bibitem{HZ3}
J.~Han and Y.~Zhao.
\newblock Minimum degree thresholds for ${C}_4^3$-tiling.
\newblock {\em Journal of Graph Theory, in press}, DOI: 10.1002/jgt.21833.

\bibitem{KK}
G.~Katona and H.~Kierstead.
\newblock Hamiltonian chains in hypergraphs.
\newblock {\em Journal of Graph Theory}, 30(2):205--212, 1999.

\bibitem{KKMO}
P.~Keevash, D.~K\"uhn, R.~Mycroft, and D.~Osthus.
\newblock Loose {Hamilton} cycles in hypergraphs.
\newblock {\em Discrete Mathematics}, 311(7):544--559, 2011.

\bibitem{KMO}
D.~K\"uhn, R.~Mycroft, and D.~Osthus.
\newblock Hamilton $\ell$-cycles in uniform hypergraphs.
\newblock {\em Journal of Combinatorial Theory. Series A}, 117(7):910--927,
  2010.

\bibitem{KO}
D.~K\"uhn and D.~Osthus.
\newblock Loose {Hamilton} cycles in 3-uniform hypergraphs of high minimum
  degree.
\newblock {\em Journal of Combinatorial Theory. Series B}, 96(6):767--821,
  2006.

\bibitem{KuOs14ICM}
D.~K{\"u}hn and D.~Osthus.
\newblock Hamilton cycles in graphs and hypergraphs: an extremal perspective.
\newblock {\em Proceedings of the International Congress of Mathematicians
  2014, Seoul, Korea}, Vol 4:381--406, 2014.

\bibitem{MaRu}
K.~Markstr\"{o}m and A.~Ruci\'{n}ski.
\newblock Perfect {M}atchings (and {H}amilton {C}ycles) in {H}ypergraphs with
  {L}arge {D}egrees.
\newblock {\em Eur. J. Comb.}, 32(5):677--687, July 2011.

\bibitem{My14}
R.~Mycroft.
\newblock Packing k-partite k-uniform hypergraphs.
\newblock {\em submitted}.

\bibitem{RR}
V.~R\"odl and A.~Ruci\'nski.
\newblock Dirac-type questions for hypergraphs — a survey (or more problems
  for endre to solve).
\newblock {\em An Irregular Mind}, Bolyai Soc. Math. Studies 21:561--590, 2010.

\bibitem{RRS06}
V.~R\"odl, A.~Ruci\'nski, and E.~Szemer\'edi.
\newblock A {D}irac-type theorem for 3-uniform hypergraphs.
\newblock {\em Combinatorics, Probability and Computing}, 15(1-2):229--251,
  2006.

\bibitem{RRS08}
V.~R\"odl, A.~Ruci\'nski, and E.~Szemer\'edi.
\newblock An approximate {D}irac-type theorem for k-uniform hypergraphs.
\newblock {\em Combinatorica}, 28(2):229--260, 2008.

\bibitem{RRS11}
V.~R\"odl, A.~Ruci\'nski, and E.~Szemer\'edi.
\newblock Dirac-type conditions for {Hamiltonian} paths and cycles in 3-uniform
  hypergraphs.
\newblock {\em Advances in Mathematics}, 227(3):1225--1299, 2011.

\bibitem{Sze}
E.~Szemer{\'e}di.
\newblock Regular partitions of graphs.
\newblock In {\em Probl\`emes combinatoires et th\'eorie des graphes ({C}olloq.
  {I}nternat. {CNRS}, {U}niv. {O}rsay, {O}rsay, 1976)}, volume 260 of {\em
  Colloq. Internat. CNRS}, pages 399--401. CNRS, Paris, 1978.

\end{thebibliography}

\end{document}